\documentclass[11pt]{amsart}
\usepackage{yfonts,amsmath,amsfonts,amssymb,amsthm}
\usepackage[hidelinks,hyperindex,breaklinks]{hyperref}
\usepackage{yfonts,amsmath,amsfonts,amssymb,amsthm}
\usepackage{graphics}
\usepackage{epsfig}
\usepackage{esint}
\usepackage{color}
\newcommand{\ignore}[1]{}

\newtheorem{theorem}{Theorem}
\newtheorem{remark}{Remark}
\newtheorem{lemma}{Lemma}
\newtheorem{corollary}{Corollary}

\title[Quantitative homogenization of curves in a Brownian
potential]{Quantitative homogenization of the maximal action of curves in a Brownian
potential}

\author{Felix Otto, Matteo Palmieri}

\begin{document}

\begin{abstract}

Motivated by an optimal-matching problem (Leighton-Shor) and the random-field Ising model
(Aizenman-Wehr, Ding-Wirth),
we consider a variational problem for graphs in $1+1$ dimension maximizing
an action that is the difference of a field term given by 
integrating white noise over the subgraph on the one hand,
and the Dirichlet integral of the (continuum) height function $h=h(x)$ on the other hand.
This problem is scale-invariant in law, and requires a small-scale cut-off which we
implement by restricting to $h$ that are piecewise linear on intervals of size $1$
and vanish at $x=0,L$. 
We show that with overwhelming probablity, the maximal action $A$ satisfies $A=a\ln L+O(1)$ 
for a deterministic constant $a\in(0,\infty)$.
This can be considered as a homogenization result that is quantitative in an optimal way.  

\smallskip

The present result
sharpens a recent qualitative homogenization result by the authors with C.~Wagner;
it does so by finding bounds for the action that are locally uniform in the boundary conditions.
Like the earlier result, the present one relies on pointwise
bounds on the optimizer, as provided by Dembin-Elboim-Hadas-Peled in a more general setting. 
In the earlier work, the small-scale cut-off also involved an explicit discretization 
of the field term, yielding a Brownian potential that is i.~i.~d.~in $x\in\mathbb{Z}$;
this had the benefit of allowing for a comparison arguments, 
but is inconvenient for the coarse-graining used here.

\end{abstract}

\maketitle


\tableofcontents

\section{The main result}

\subsection{Problem setting: action and configuration space}\label{SS:setting}
For a function $h\colon[0,L]\to\mathbb{R}$ on the interval $[0,L]$
of dyadic size $L\in 2^\mathbb{N}$, we consider its Dirichlet energy
\begin{align}\label{e45}
D(h):=\frac{1}{2}\int_0^Ldx(\frac{dh}{dx})^2,
\end{align}
and the integral of white noise $\xi$ on $\mathbb{R}^2$ against its oriented subgraph
\begin{align}\label{e71}
W(h):=\int_0^Ldx\int_0^{h(x)}dy~\xi.
\end{align}
Note that $\int_a^bdx\int_0^h dy~\xi$ $=_{\rm law}\sqrt{b-a}\,B(h)$ where $B$
is two-sided Brownian motion.
We are interested in maximizing the
\begin{align}\label{e1}
\mbox{action}\quad W(h)-D(h)
\end{align}
over the
\begin{align}\label{e9}
\lefteqn{\mbox{configuration space}\quad
\big\{\,h\colon[0,L]\rightarrow\mathbb{R}\;\mbox{continuous}\,|}\nonumber\\
&\,\mbox{$h$ is linear
on $[n-1,n]$ for $n=1,\cdots,L$ with $h(0)=h(L)=0$}\,\big\}
\end{align}
of piecewise linear functions with homogeneous Dirichlet boundary conditions.
Note that on this $(L-1)$-dimensional space, not only $D$ but also $W$ are
continuous for almost every configuration of $\xi$, so that the maximization 
problem is well-defined (almost surely).


\subsection{Scale invariances in law}

Note that under the rescaling of $x$ and $y$ axes
\begin{align*}
(x,y)=(\lambda\hat x,\mu\hat y)\quad\mbox{and thus}\quad
L=\lambda\hat L,\;\;h=\mu\hat h,
\end{align*}
the two functionals transform as follows
\begin{align}\label{ao68}
D=(\mu^2/\lambda)\,\hat D\quad\mbox{and}\quad W=_{\rm law}\sqrt{\lambda\mu}\,\hat W.
\end{align}
In view of the different homogeneity in $\mu$, 
there is no loss in generality in having no relative constant in $W-D$, 
which amounts to an appropriate choice of units for $y$.
In view of the identical homogeneity for $\lambda=\mu$,
there is no loss in generality in choosing units for $x$ such that one of the length scales
of the problem is unity. Here, we opted to take the microscopic scale, i.~e.~the length
of the interval on which a configuration is linear, as unit. 


\subsection{Statement of main result}

The scaling (\ref{ao68}) in its isotropic variant $\lambda=\mu$ shows that the action $W-D$
has units of length so that it is natural to look at action $(W-D)/L$ per system length $L$.
Our main result, the upcoming Theorem \ref{T:1}, 
identifies the asymptotic behavior of the maximal action per system length
\begin{align}\label{ao15}
A_L:=\sup_{\text{configuration}\;h}(W-D)(h)/L;
\end{align}
it diverges logarithmically in $L$. Beyond scaling, we can show that there is a 
deterministic constant $a^*\in(0,\infty)$ such that
\begin{align}\label{ao69}
A_L=a^*\ln L+O(1)
\end{align}
with overwhelming probability, which can be seen as an (optimally) quantified 
homogenization result. Of course, the argument of the logarithm ought to be
interpreted as the ratio of the macroscopic scale as given by the system size $L$,
and the microscopic scale given by the interval size on which the configurations
are linear, which w.~l.~o.~g.~we set to one. In particular, the divergence as $L\uparrow\infty$
shows that the small-scale cut-off provided by the linearity assumption cannot be
easily removed.

\medskip

While for fixed configuration, $W(h)$ is Gaussian, it will be convenient to have
more flexibility in measuring the fluctuations in (\ref{ao69}), which is provided
by the family of Orlicz norms: For $s\in[1,\infty)$ and a random variable $X$ 
\begin{align}\label{ao74}
\|X\|_s:=\inf\{N>0:\mathbb{E}\exp(\frac{|X|}{N})^s\le e\}.
\end{align}
See Lemma~\ref{L:11} in the appendix for the relation of these norms with tail
decay. Note that they are normalized such that $|\mathbb{E}X|$ $\le\|X\|_s$.

%

\begin{theorem}\label{T:1}
There exists a deterministic constant $a^*\in(0,\infty)$ such that
\begin{align*}
\big\|A_L-a^*\ln L\big\|_{3/2}\lesssim 1,
\end{align*}
where $\lesssim$ is short for $\le C$ for some universal constant.
\end{theorem}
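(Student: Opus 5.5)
The strategy is a multiscale (renormalization) argument. We compare the maximal action on dyadic scales $L$ and $2L$ and aim to show that the increment $A_{2L}-A_L$ equals a deterministic constant $a^*\ln 2$ up to an error that is summable over dyadic scales. The reason for this is scale invariance. By (\ref{ao68}) with $\lambda=\mu=2$, the problem on $[0,2L]$ with unit cut-off has the same law as the problem on $[0,L]$ with cut-off $1/2$. The ratio $A_{2L}/1$ versus $A_L$ therefore differs only by the contribution of one extra microscopic scale. Heuristically, each octave of scales contributes an i.i.d.-like $O(1)$ amount to the action per length. Summing $\log_2 L$ such contributions gives $a^*\ln L$. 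Because of the averaging over the $\sim L$ independent blocks, the per-length fluctuations do not accumulate as $\sqrt{\ln L}$; they stay $O(1)$.

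The concrete steps are as follows. First, I would introduce the action with general boundary data $h(0)=h_0$, $h(L)=h_L$, and define $A_L(h_0,h_L)$. Using pointwise bounds on the optimizer in the spirit of Dembin–Elboim–Hadas–Peled, I would show that the optimizer satisfies $|h|\lesssim L$ with stretched-exponential tails. I would then show that $A_L(h_0,h_L)$ is locally uniformly close to $A_L(0,0)$: it changes by $O(1)$ in Orlicz norm when $|h_0|,|h_L|\lesssim L$. This needs a cut-and-paste argument that modifies the optimizer near the endpoints over a region of length $\sim L$, costing Dirichlet energy $O(L)$, i.e. $O(1)$ per length after normalization. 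Second, I would establish approximate subadditivity and superadditivity on the scale $2L$. Splitting $[0,2L]$ into two halves, the optimum satisfies
\[
2L\,A_{2L}\ \ge\ L\,A_L^{(1)}+L\,A_L^{(2)}-O(L),
\]
using the boundary-uniform bound to glue two half-optima at a common midpoint value. Conversely, $2LA_{2L}\le \sup_{m}(\cdots)$ over midpoint values $m$, and the local uniformity controls this supremum at cost $O(L)$. Third, I would handle the coarse-graining. I would compare the cut-off-$1$ problem on $[0,2L]$ with the cut-off-$2$ problem, which by scaling is the cut-off-$1$ problem on $[0,L]$. Their difference in action per length, $Y_L:=A_{2L}-A_L$ in distribution, is a sum of local contributions from the $L$ blocks of size $2$. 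Concentration then shows that $Y_L$ is within $O(L^{-\alpha})$ of its mean in $\|\cdot\|_{3/2}$. Here I would use Gaussian concentration for $W$ combined with the localization of the optimizer's small-scale adjustments. Fourth, I would show that $\mathbb{E}Y_L$ is Cauchy with summable increments, $|\mathbb{E}Y_{2L}-\mathbb{E}Y_L|\lesssim L^{-\alpha}$, again by scaling plus the boundary uniformity. I would define $a^*\ln2:=\lim\mathbb{E}Y_L$, and telescope: $A_L-a^*\ln L=\sum_k(Y_{2^k}-a^*\ln 2)+A_1$. The sum of the errors converges in $\|\cdot\|_{3/2}$. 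Positivity of $a^*$ follows from a lower bound that constructs, at each scale, a small bump gaining positive expected action. Finiteness of $a^*$ follows from the upper bound via a union/chaining bound on $W$.

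The main obstacle is the third step: quantifying that the one-octave contribution $Y_L$ has fluctuations decaying in $L$, which is what makes the errors summable. This requires showing that the optimizer's response to refining the cut-off is local. In other words, changing the problem on one block of size $2$ affects the optimal action by $O(1)$ only, uniformly in boundary data. Only then can Gaussian concentration (or an Efron–Stein/martingale argument over blocks) yield the $\sqrt{L}/L$ decay. I would attack it first by proving a stability estimate: the optimal action with boundary values fixed at the optimizer's values at multiples of some mesoscale $\ell$ coincides with the true one up to $O(L/\ell)$ errors. This decouples the blocks into independent pieces, each governed by the boundary-uniform bounds of the first step. The $3/2$ Orlicz exponent would come from combining the Gaussian tails of $W$ with the stretched-exponential tails of the pointwise optimizer bounds.
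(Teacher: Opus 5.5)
Several of your ingredients are the ones the paper uses: boundary uniformity, DEHP-type pointwise bounds on the optimizer, Gaussian concentration, and coarse-graining. There is, however, a genuine gap at the centre of the argument. Your telescoping over octaves only closes if the per-octave errors are summable, i.e.\ if both $|\mathbb{E}Y_L-a^*\ln 2|$ and $\|Y_L-\mathbb{E}Y_L\|_{3/2}$ decay like $L^{-\alpha}$. That is strictly stronger than the theorem, since it would give $\mathbb{E}A_L=a^*\ln L+c+O(L^{-\alpha})$, and none of your steps produces decay.

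On a common noise, $Y_L$ is $A^{(1)}_{2L}-A^{(2)}_{2L}$ (cut-off $1$ versus cut-off $2$). The splitting (\ref{ao32}) only sandwiches it between block averages $\frac1L\sum_n A^-_{2,n}$ and $\frac1L\sum_n A^+_{2,n}$, evaluated at random coarse configurations. These two bounds differ in mean by $\mathbb{E}A^+_2-\mathbb{E}A^-_2$, which is of order one and does not shrink with $L$.

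The blocks are also not independent of the coarse configuration at which they are evaluated. Your proposed decoupling, fixing the optimizer's values on a mesoscale grid, is circular: those values are functions of the same noise. Moreover, the constrained optimum then equals the true one exactly, so there is nothing to estimate. The available remedy is binning into a net plus a union bound over $e^{C(L/l)\ln\nu}$ coarse configurations (Lemmas~\ref{L:stoch-bis} and~\ref{L:count}). That union bound consumes precisely the $\sqrt{N}$ averaging gain you are counting on, and leaves an $O(1)$ error.

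Likewise, Gaussian concentration for the difference $A^{(1)}_{2L}-A^{(2)}_{2L}$ would require $\int|h^{*,(1)}-h^{*,(2)}|\,dx\ll L^2$. That is macroscopic stability of the optimizer under refinement of the cut-off, a chaos-type property the paper says is not known. With only $O(1)$ error per octave, your telescoping yields an $O(\ln L)$ error, which does not even give existence of $a^*$.

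The paper never needs decay. It proves approximate additivity $|\mathbb{E}A_L-\mathbb{E}A_l-\mathbb{E}A_{L/l}|\lesssim 1$ for every splitting, with a constant independent of $l$; see \eqref{m14}. A Hyers--Ulam type argument then turns this uniform $O(1)$ defect into $|\mathbb{E}A_L-a^*\ln L|\lesssim1$: write $L=l^kl'$, iterate, divide by $\ln L$, and let $l\uparrow\infty$. The splittings into two large factors are exactly what your octave-only scheme lacks.

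The $O(1)$ fluctuation bound is obtained directly for $A_L$, not by summing per-scale fluctuations. It comes from sandwiching $A_L$ between $A^\pm_L$ and from Gaussian concentration with Lipschitz constant $(L^{-2}\int|h^*|)^{1/2}$, combined with the DEHP midpoint bound. Proving \eqref{m14} itself needs the boundary-uniform $A^\pm_L$, the net $\mathcal{N}_\nu$, the scale-by-scale Dirichlet bounds, and a buckling that ties together $\mathbb{E}A^+_L-\mathbb{E}A^-_L$, the fluctuations and the midpoint heights.

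Your Step 1 also fails as stated. Shifting the optimizer by an amount of order $L$ on an interval of length of order $L$ moves its fine-scale oscillations off the noise they were adapted to. You therefore forfeit the fine-scale gain on that interval, which is of order $\ln L$ per unit length, not merely $O(1)$ of Dirichlet energy. The paper instead re-optimizes on the outer quarters and glues these pieces to an $A^+$-maximizer on the middle half (Lemma~\ref{L:pm}). Boundary uniformity only becomes available after the buckling.
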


The order one error is optimal, as follows from \cite[(2) in Theorem 1.2]{DEP}.
It suggests that a renormalization of the problem that allows to send the small-scale
cut-off to zero is possible.
Such a scaling limit is known for (semi-continuous Brownian) last passage percolation, and
is described by what is called the directed landscape \cite[Theorem
1.5]{DOV}.
There are obvious differences: Loosely speaking, here, the white noise is integrated
over the subgraph while there it is integrated over the graph.
Hence while there, the optimal curve is not better than H\"older continuous 
with exponent $2/3$ in line with the scaling of the KPZ universality class, 
here it is Lipschitz continuous up to a logarithmic correction \cite[Proposition
3]{OPW}.
Moreover, there is no analogue from integrable probability like the 
Tracy-Widom distribution and the Airy process in sight.
However, there are last passage percolation models that due to i.~i.~d.~weights with
specific thick tails have the same critical scaling as ours. 
In these, the same exponent of the logarithm emerges \cite[Theorem 2]{G,GGN}.
We don't know whether the optimizer significantly changes when re-sampling a small fraction
of the randomness, a property dubbed disorder chaos by S.~Chatterjee, and recently
established for an isotropic version of first-passage percolation \cite[Section
2.4]{BSS}.
If the contrary were true, a renormalization would be at arm's length.

\subsection{Refinement of the main result: 
equipartition of energy}\label{SS:refine}

The origin of the logarithmic scaling becomes transparent by the upcoming Theorem \ref{T:2},
which refines Theorem \ref{T:1}, and can be interpreted as follows: 
Each of the $\log_2 L$ dyadic scales contributes the same amount to the action maximum.
To express this, for every intermediate dyadic scale $1\le l\le L$, 
we introduce a projection from configuration space $h\mapsto h_{\ge l}$ 
onto the space of coarse-grained configurations of scales $\ge l$,
as conveniently defined by linear interpolation at the mesoscopic grid $\{nl\}_{n=0,\cdots,L/l}$:
\begin{align}
&h_{\ge l}\colon[0,L]\rightarrow\mathbb{R}~\mbox{continuous and}\nonumber\\
&\mbox{linear on
$[(n-1)l,nl]$for $n=1,\cdots,L/l$}\nonumber\\
&\mbox{as determined by}\quad h_{\ge l}(nl)=h(nl)\quad\mbox{for $n=0,\cdots,L/l$};\label{m43}
\end{align}
we note that $h_{\ge 1}=h$ while $h_{\ge L}=0$.
With the notation
\begin{align*}
h^*:=\mbox{maximizer of $(W-D)/L$ on configuration space},\quad
\end{align*}
so that $(W-D)(h^*)/L=A_L$, and for two dyadic scales $1\le l'< l\le L$, 
we may interpret $(W-D)(h^*_{\ge l'})/L-(W-D)(h^*_{\ge l})/L$ 
as the amount the scales between $l'$ and $l$ contribute to $A_L$.

\begin{theorem}\label{T:2}
For any pair of dyadic scales $1\le l'< l\le L$ we have
\begin{align*}
\big\|(W-D)(h^*_{\ge l'})/L-(W-D)(h^*_{\ge l})/L-a^*\ln l/l'\big\|_{3/2}\lesssim 1.
\end{align*}
\end{theorem}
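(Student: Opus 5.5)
The plan is to prove Theorem~\ref{T:2} by a telescoping/induction over dyadic scales. Everything reduces to a one-step statement: for every dyadic $l\ge2$, the increment
\begin{align*}
\Delta_l:=(W-D)(h^*_{\ge l/2})/L-(W-D)(h^*_{\ge l})/L
\end{align*}
equals a deterministic constant $a^*\ln 2$, up to an error that stays summable after telescoping. A crude bound $\|\Delta_l - a^*\ln2\|_{3/2}\lesssim1$ per scale is useless, since summing over $\log_2(l/l')$ scales gives a logarithmic error. The error has to come out as fluctuations that \emph{average out} across scales. I therefore structure the argument around conditioning on coarse data.

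\textbf{Step 1 (localization).} Given the values $h^*(nl)$ on the mesoscopic grid, the restriction of $h^*$ to each block $[(n-1)l,nl]$ maximizes the action on that block with boundary values $h^*((n-1)l),h^*(nl)$. Subtract the linear interpolation $h^*_{\ge l}$. Since $D$ is quadratic and the interpolation is linear on the block, the Dirichlet energy splits exactly into the coarse part plus the energy of the fluctuation. The field term splits into the coarse part plus the white noise integrated between the line and the graph. Its law is that of the same problem on a block of size $l$ with a \emph{sheared} boundary condition, and it depends on the slope only through a shift of the noise.

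\textbf{Step 2 (block quantity).} Define, for a block of size $l$ and boundary data $(y_0,y_1)$, the maximal block action $M_l(y_0,y_1)$ relative to the linear interpolant. Step 1 shows that $L\cdot(\text{action of }h^*)-L\cdot(\text{action of }h^*_{\ge l})$ is a sum over the $L/l$ blocks of $M_l$ evaluated at the optimal boundary data. The same holds for the partial contribution of scales between $l'$ and $l$: the refinement of $h^*_{\ge l'}$ within a block of size $l$ is the optimizer's own decomposition.

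\textbf{Step 3 (uniformity in boundary data).} This is the key step, and the one the abstract advertises: bounds for the block action that are \emph{locally uniform in the boundary conditions}. I would show that $\sup_{|y_0|,|y_1|\le K}\big|M_l(y_0,y_1)/l-a^*\ln l\big|$ has Orlicz $3/2$ norm $\lesssim1$, with $K$ of the order of the typical increment of $h^*$ across a block. The pointwise bounds on the optimizer from \cite{DEP} and \cite[Proposition 3]{OPW} give slopes $O(\sqrt{\ln})$, hence increments of order $l$ up to logarithms. Uniformity would follow from a chaining/union bound over a grid of boundary values, combined with a Lipschitz estimate of $M_l$ in $(y_0,y_1)$. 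This estimate is obtained by comparing with a competitor that shifts by a linear function, at the cost of Dirichlet energy, plus a Brownian-increment term. With block quantities uniformly controlled, the sum over the $L/l$ independent blocks has its centered fluctuations reduced by $\sqrt{l/L}$ through concentration for sums of independent Orlicz variables (Lemma~\ref{L:11}). This yields $\|\cdot-a^*\ln(l/l')\|_{3/2}\lesssim1$ directly, without telescoping error accumulation.

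\textbf{Step 4 (deterministic constant).} Existence of $a^*$, shared with Theorem~\ref{T:1}, comes from near-additivity $\mathbb{E}A_{2L}\approx\mathbb{E}A_L+a\ln2$ up to summable errors. These are again controlled by the uniform block estimate of Step 3, so $\mathbb{E}A_L-a^*\ln L$ converges.

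I expect Step 3 to be the main obstacle. Boundary data at the grid points are themselves random and correlated with the blocks, so one cannot simply evaluate at fixed data. The supremum over boundary data must cost only $O(1)$, not an extra $\ln$ factor. Here the stretched-exponential $3/2$ integrability is presumably what survives the union bound over the logarithmic range of boundary values.
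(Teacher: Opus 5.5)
Your localization (Step 1, and Step 2 when $l'=1$) matches the paper's splitting \eqref{ao32} together with \eqref{m45} and shear invariance. The argument then breaks exactly at the point you flag, and the mechanism you propose in Step 3 does not repair it.

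The block actions must be evaluated at the coarse data of the \emph{random} optimizer $h^*_{\ge l}$, and this optimizer depends on the noise inside the blocks. Conditioning on it does not produce independent blocks with the law of $A_l$; this is precisely the naive step discussed in Subsection~\ref{SS:tackle}. So the $\sqrt{l/L}$ reduction of fluctuations you invoke is only available for a deterministic coarse configuration.

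A per-block supremum over a deterministic box $|y_0|,|y_1|\le K$ does not cure this. The data $h^*((n-1)l),h^*(nl)$ are absolute heights ranging over scale $L$ (cf.\ $H_L$), not increments. Invariance under vertical shifts holds only in law, so it gives no free supremum over positions. If you enlarge the box to height $O(L)$, each block supremum runs over roughly $L/l$ nearly independent horizontal bands. Its mean then exceeds $\mathbb{E}A_l$ by an amount that diverges with $L/l$ (like $\ln^{2/3}(L/l)$ for $3/2$-type tails), and averaging over blocks cannot remove such a mean shift.

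The missing idea is an entropy bound over \emph{entire} coarse configurations, rather than over the boundary values of one block. The paper proceeds as follows.
\begin{itemize}
\item It bins coarse configurations into the lattice net $\mathcal{N}$ of \eqref{ao48} and lets $A^\pm_{l,n}(\bar h)$ be the sup/inf of the block action over a bin. For deterministic $\bar h\in\mathcal{N}$ these are independent in $n$ with the law of $A^\pm_l$, and $A^-_{l,n}(\Pi(h^*_{\ge l}))\le(W_n-D)(h^*_n)/l\le A^+_{l,n}(\Pi(h^*_{\ge l}))$.
\item The union bound then runs over $\mathcal{N}_\nu$. It is paid for by the large deviations of an average of $N=L/l$ independent variables, $\mathbb{P}(N^{-1}\sum_n|A_n|\ge\mu)\le e^{-N\mu/\|A\|_1+N}$, against the entropy $\ln\#\mathcal{N}_\nu\le C_0N\ln\nu$ (Lemmas~\ref{L:stoch-bis} and \ref{L:count}).
\item Finally, $\Pi(h^*_{\ge l})\in\mathcal{N}_{\nu^*}$ with $\|\nu^*\|_1\lesssim1$, which comes from the scale-by-scale Dirichlet bounds of Lemma~\ref{L:dir} (Remark~\ref{rmk1-bis}).
\end{itemize}
So the error is $O(1)$ because $N$-fold concentration beats an $e^{O(N)}$ entropy, not because $3/2$-integrability survives a logarithmic range. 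Your suggested input, slopes of order $\sqrt{\ln}$, would only give $\ln\#\sim(L/l)\ln\ln(L/l)$. That is exactly the double-logarithmic loss the weighted product of balls in \eqref{ao61} is designed to avoid.

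The uniform block bounds you aim for in Step 3 (on bins of size $O(l)$) can simply be quoted from \eqref{m13} and Theorem~\ref{T:1}. A chaining argument alone would not give $\mathbb{E}A^+_l-\mathbb{E}A^-_l\lesssim1$; in the paper this requires the cut-and-paste Lemma~\ref{L:pm} and the buckling.

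Two smaller points. First, for $l'>1$, $h^*_{\ge l'}$ restricted to a block is not a block maximizer, so Step 2 does not apply as stated. Reduce to $l'=1$ by the triangle inequality; there the $h^*_n$ are genuine maximizers with zero data. Second, Step 3 presupposes the $a^*$ that Step 4 builds from it. The paper instead obtains $a^*$ from \eqref{m14}, which holds with $O(1)$ (not summable) errors, uniformly over all splits $L=l\cdot L/l$.
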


We recover Theorem \ref{T:1} for $(l',l)$ $=(1,L)$. A first motivation for this
additivity of $A_L$ in $\ln L$ will be given in Subsection \ref{SS:OPW}.

\medskip

Let us explain the relation with the previous paper \cite{OPW}, where
the close model with $W$ in \eqref{e71} replaced by
\begin{align*}
\tilde
W(h):=\sum_{n=1}^{L-1}W_n(h(x))\quad\mbox{with}\,\{W_n(\cdot)\}_n\,\mbox{independent
Brownian motions}
\end{align*}
was studied. With the aim of getting the optimal
convergence rate -- which is the main improvement with respect to that work --
we adopted the new field term in \eqref{e71}, in order to take advantage of the
invariance under isotropic rescaling ($\lambda=\mu$ in \eqref{ao68}). We also find the new
definition as being more natural. The modification forced us to change the proof
in order to avoid the maximum principle, which is not satisfied anymore by the
new functional and was previously used to get uniform (in the boundary
conditions) $L^\infty$ estimates on the minimizer. As a consequence, the new
strategy might be applied to higher-order Dirichlet energies and higher
codimensions.

\section{Motivation and context}

\subsection{Leighton-Shor on the optimal matching problem}\label{SS:LS}

We are given a cloud of points $\{X\}\subset B$ in a ball $B$ 
in the Euclidean space of dimension $d$,
with number $N=\#\{X\}$ equal to $\lambda(B)$, where
\begin{align*}
\mbox{$\lambda$ is the Lebesgue measure on the $d$-dimensional Euclidean space.}
\end{align*}
Consider a partition $\{F_X\subset B\}_X$, 
indexed by the points $X$, of $B$ into $N$ sets of unit volume
-- this may be considered a matching.
We monitor the maximum over $X$ of the distance between $X$ and the complement of $F_X$,
that is, $\max_X\max_{y\in F_X}|y-X|$ -- this is a matching distance. 
What is the infimum of this distance over all partitions $\{F_X\}_X$? 
Clearly, this optimal matching problem can be rephrased
as an optimal transportation problem between the atomic measure
\begin{align*}
\mu:=\sum_{X}\delta_X
\end{align*}
and the Lebesgue measure $\lambda\lfloor B$ (restricted to $B$), 
where the cost functional is the (essential) supremum of the displacement.

\medskip

Let us give some geometric intuition. For any subset $\Sigma\subset B$
consider the ``discrepancy'' $(\mu-\lambda)(\Sigma)$ that measures the excess
of number of points $\{X\}$ within $\Sigma$ over its volume. It is obvious that if $R$ is $\ge$ 
the above optimal matching distance, then there is enough volume at distance $R$
of $\Sigma$ to accommodate this excess:
\begin{align}\label{ao21}
(\mu-\lambda)(\Sigma)\le\lambda(\{x\in B-\Sigma|{\rm dist}(x,\Sigma)\le R\}).
\end{align}
The r.~h.~s.~ of (\ref{ao21}) is the (Lebesgue) measure of a tubular neighborhood of $\Sigma$;
it thus connects measure and (Euclidean) distance. 
As we will use later, it provides a definition\footnote{known as the Minkowski content}
of the (Hausdorff) measure of the boundary
$\partial\Sigma\cap B$, 
which is also called the perimeter of $\Sigma$ in $B$:
\begin{align}\label{ao22}
P(\Sigma,B)=
\lim_{r\downarrow 0}\frac{1}{r}\lambda(\{x\in B-\Sigma|{\rm dist}(x,\Sigma)\le r\}). 
\end{align}
Note that (\ref{ao21}) is only seemingly asymmetric: Replacing $\Sigma\subset B$ 
by its complement $B-\Sigma$, we obtain a lower bound because of
$(\mu-\lambda)(B-\Sigma)$ $=-(\mu-\lambda)(\Sigma)$.
It is intuitive that (\ref{ao21}) defines the variational problem dual to the
optimal matching problem,
a worst case~/~ma\-xi\-mi\-zation problem over subsets $\Sigma\subset B$, 
rather than a minimization problem over partitions $\{F_X\}_X$.

\medskip

We are interested in point clouds $\{X\}$ that are ``maximally'' random, 
more precisely, where the positions $X$ are independently chosen from
the uniform distribution in $B$, i.~e.~are distributed according to the
probability measure $\frac{\lambda\lfloor B}{\lambda(B)}$. When $B$ exhausts the entire
Euclidean space, the distribution of the point cloud $\{X\}$ converges to the
Poisson point process\footnote{We sometimes address our ensemble $\{X\}$ as
``canonical ensemble'' in view of its fixed particle number $N$, 
as opposed to the ``grand-canonical ensemble'' given by the Poisson point process,
with its Poisson-distributed particle number.}, 
of unit intensity as a consequence of $N=\lambda(B)$. 
Let us also specify to $B=B_L$, the ball of radius $L$ (centered at the origin). 

\medskip

After these specifications, the optimal matching problem comes with two input length scales, 
a macroscopic and a microscopic one: 
The macroscopic scale is the typical scale of the domain $B$, here the radius $L$.
The microscopic scale is the typical distance between the points $X$,
which here is (of order) unity, in view of the unit intensity.
It also comes with an output length scale, namely the optimal transportation distance $R$.
Clearly, we expect $1\ll R\ll L$. 
The following dimensional analysis of (\ref{ao21}) shows that the case of $d=2$ is critical:
The typical size of the l.~h.~s.~of (\ref{ao21}), 
as given by its standard deviation\footnote{the $\approx$
holds for $\lambda(\Sigma)\ll\lambda(B)$ and turns into a $=$ for the Poisson point process}
\begin{align*}
\sqrt{\mathbb{E}((\mu-\lambda)(\Sigma))^2}
=\sqrt{\mathbb{E}(\mu(\Sigma)-\mathbb{E}\mu(\Sigma))^2}
\approx\sqrt{\lambda(\Sigma)},
\end{align*}
has units of $\mbox{length}^{d/2}$. In view of (\ref{ao22}), the r.~h.~s.~of
(\ref{ao21}) has units of $(\mbox{units of $R$})\times\mbox{length}^{d-1}$. 
Hence if and only if $d=2$, these heuristics suggest that $R$ is dimensionless
\footnote{it also correctly predicts that $R$ has units of $\mbox{length}^{1/2}$ iff $d=1$}. 

\medskip

Confirming these heuristics, Leighton and Shor prove in the case of $d=2$,
to which we will henceforth restrict

\begin{theorem}[{Leighton-Shor \cite[Theorem 2]{LS}}]\label{T:5}
\begin{align}\label{ao23}
R\sim\ln^{3/4} L\quad\mbox{for}\quad L\gg 1\quad\mbox{with overwhelming probability}.
\end{align} 
\end{theorem}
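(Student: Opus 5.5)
By the duality heuristic (\ref{ao21}), I would reduce Theorem \ref{T:5} to the statement that, with overwhelming probability,
\begin{align*}
\sup_{\Sigma}\frac{(\mu-\lambda)(\Sigma)}{P(\Sigma,B_L)}\sim\ln^{3/4}L,
\end{align*}
where $\Sigma$ ranges over sets with boundaries that are not too small, say of size $\gtrsim1$. To pass from this to $R$ in both directions, one needs a Hall-type marriage argument, for instance via the max-flow/min-cut duality on a mesh of size $R$. On a grid at unit scale, this step also turns $\mu-\lambda$ into a field that is approximately Gaussian. To get there, I would first Poissonize, since the canonical and grand-canonical ensembles differ only by a global constraint. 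Then I would replace the counts in unit cells by independent centered Gaussians, so that the discrepancy of $\Sigma$ becomes essentially $\int_\Sigma\xi$ with $\xi$ white noise.

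For the lower bound I would construct $\Sigma$ as the subgraph of a function $h$ on a strip of length $L$, in the spirit of the action $W-D$ of this paper, but with the Dirichlet penalty replaced by a Lipschitz constraint, since the perimeter is $\int\sqrt{1+h'^2}$. The function $h$ is built scale by scale over the $K\approx\log_2L$ dyadic scales $l$. At scale $l$, I add on each interval of length $l$ a tent of height $\sigma l$, with its sign chosen according to the noise in the region it would sweep. Each tent sweeps an area of about $\sigma l^2$ and hence gains about $\sqrt{\sigma}\,l$ in discrepancy. With $L/l$ tents, the gain is about $L\sqrt\sigma$ per scale, and about $KL\sqrt\sigma$ in total. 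The perimeter stays $O(L)$ provided the total slope stays bounded. Since the slopes add with roughly random signs, this suggests the choice $\sigma\sim K^{-1/2}$, which yields discrepancy $\sim LK^{3/4}$ against perimeter $\sim L$, i.e.\ the ratio $\ln^{3/4}L$. The technical point here is that random signs only control $|h'|$ in mean square. I would enforce a genuine Lipschitz bound by a stopping rule: stop adding tents at a location once the accumulated slope exceeds a threshold. I would then show that this discards only a small fraction of the gain, which uses the independence of the noise across scales and locations.

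The upper bound is the main obstacle. One must show that $\int_\Sigma\xi\lesssim P(\Sigma)\ln^{3/4}L$ uniformly over all $\Sigma$. After localizing to pieces of boundary of length comparable to $L$ and rescaling, this amounts to bounding the supremum of a Gaussian process indexed by Lipschitz graphs (or rectifiable curves of length $\lesssim L$), with the canonical distance $d(\Sigma,\Sigma')=\sqrt{\lambda(\Sigma\triangle\Sigma')}$. Dudley's entropy integral only gives $\ln L$, because it sums the $K$ scales linearly. Instead, I would use generic chaining or majorizing measures. I would decompose a curve into its dyadic Faber--Schauder coefficients at the $K$ scales, so that the Lipschitz constraint becomes an $\ell^2$-type constraint $\sum_k\sigma_k^2\lesssim1$ on the slope budgets. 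I would then build admissible sequences adapted to this ellipsoid-like structure, as in Talagrand's treatment of ellipsoids. The resulting bound optimizes $\sum_k\sqrt{\sigma_k}$ under $\sum_k\sigma_k^2\le1$, which gives exactly $K\cdot K^{-1/4}=K^{3/4}$, matching the lower bound. Carrying out this chaining rigorously, including the entropy counts of curves at each scale and the Gaussian tail bounds needed for the ``overwhelming probability'' statement, is where I expect the bulk of the work to lie.
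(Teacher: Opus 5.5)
The paper does not prove this statement. It quotes it from \cite{LS}, points only to Talagrand's generic-chaining reproof \cite[Section 4.7]{T22}, and explains the exponent $3/4$ heuristically. Your maximization of $\sum_k\sqrt{\sigma_k}$ under $\sum_k\sigma_k^2\le1$ is the constrained form of the paper's penalized computation, which combines $D(\epsilon^{2/3}\hat h)=\epsilon^{4/3}D(\hat h)$ with $\mathbb{E}A_l\approx a^*\ln l$.

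Your upper-bound plan follows Talagrand's route, and you correctly locate the difficulty there. The budget profile $(\sigma_k)_k$ varies with the curve, so one must chain over the whole ellipsoid rather than over fixed per-scale balls. That half is a programme, not a proof, but it is the right programme. Note also that replacing unit-cell Poisson counts by independent Gaussians inside a supremum is not free. This is the first approximation of Subsection~\ref{ss:dual}, made rigorous in \cite{COPW}; in the upper bound it is simpler to chain directly with Bernstein-type tails.

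The step that fails as stated is in the lower bound, where you invoke ``the independence of the noise across scales and locations''. The regions swept at different scales overlap. If the tent on $[0,L]$ goes up, a downward tent on $[0,L/2]$ sweeps $\{0\le y\le 2\sigma x\}$ over $[0,L/4]$ and $\{4\sigma x-\sigma L\le y\le 2\sigma x\}$ over $[L/4,L/2]$. This region lies entirely inside the triangle whose noise integral was already observed at scale $L$, and conditioned to be favourable there. Given the coarse choices, finer tents therefore see noise biased against reversing, by an amount of the same order $\sqrt{\sigma}\,l$ as the gain itself. This lowers the expected gain per tent by a factor your argument does not control. It also makes successive signs positively correlated rather than fair coins. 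Hence the random-walk bound on the slope, and with it the claim that the stopping rule discards only a small fraction of the gain, is unjustified.

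A clean repair removes adaptivity altogether. For the lower bound on $R$ you do not need a Lipschitz graph. For the subgraph $\Sigma$ of $h$, the outer $R$-neighbourhood has area $\le RL+2R\int_0^L\mathcal{M}h'\,dx\lesssim RL\,(1+(\int_0^Lh'^2/L)^{1/2})$, where $\mathcal{M}$ is the Hardy--Littlewood maximal function. So $\int h'^2\lesssim L$ suffices.

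Now put tents $\sigma\tau_IT_I$, with $T_I$ of height $l_I$, on the intervals $I$ of a lacunary family of dyadic scales with ratio $m\ge4$. The scale components are $D$-orthogonal, as in Subsection~\ref{SS:bounds}. Hence $\int h_\tau'^2=4\sigma^2L\,\#\{\text{scales}\}$ for every sign pattern $\tau$. With $\sigma\sim K^{-1/2}$ all patterns are therefore admissible, and no stopping rule is needed.

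Lacunarity gives $\int|h_\tau-h_{\tau'}|\ge\tfrac{m-2}{m}\,\sigma\sum_{I:\tau_I\ne\tau'_I}l_I^2$, which dominates the increment variance of $\sum_I\tau_Ig_I\sqrt{c\sigma}\,l_I$ with $c=\tfrac{m-2}{4m}$ and $g_I$ i.i.d.\ standard Gaussians. The Sudakov--Fernique inequality then yields $\mathbb{E}\sup_\tau W(h_\tau)\ge\mathbb{E}\sum_I|g_I|\sqrt{c\sigma}\,l_I\sim\sqrt{\sigma}\,L\,\#\{\text{scales}\}\sim LK^{3/4}$. Borell's inequality upgrades this bound to hold with overwhelming probability.
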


Note that the argument of the logarithm in (\ref{ao23}) ought to be non-dimensional and 
thus is to be interpreted as the {\it ratio} of the macroscopic and the microscopic input length.
In the sequel, we will elucidate the seemingly mysterious exponent $3/4$, 
which cannot be inferred from a dimensional argument.

\subsection{An isoperimetric problem as a dual of the optimal matching problem}\label{ss:dual}

Our goal is to go beyond the scaling in (\ref{ao23}) and to identify the
prefactor. For this, we turn to the dual problem of maximization over subsets $\Sigma\subset B$.
Two approximations seem natural:
\begin{itemize}
\item[1)] We approximate the Poissonian shot noise by Gaussian white noise, 
which amounts $(\mu-\lambda)(\Sigma)$ 
$\approx W(\Sigma)$ $=\int_\Sigma\xi$, where $\xi$ is white noise in two dimensions.
This is a good approximation on scales $\gg 1$ and $\ll L$. In order to avoid the latter
constraint, we have to use that $(\mu-\lambda)(\Sigma)$ is better approximated by
$W(\Sigma,B)$ $=\int_\Sigma\xi-\frac{\lambda(\Sigma)}{\lambda(B)}\int_B\xi$,
which retains the symmetry property 
\begin{align}\label{ao28}
W(B-\Sigma,B)=-W(\Sigma,B).
\end{align}
However, this Gaussian approximation removes the small-scale cut-off provided 
by the typical distance between points (which we recall is unity).
Hence the approximation of the functional 
has to be complemented by a modification of the configuration space 
$\{\Sigma\subset B\}$, into which the small-scale cut-off has now to be incorporated.
We do this by restricting to polygons $\Sigma$ of edge length $\ge 1$,
which is somewhat arbitrary but convenient.
\item[2)] We take (\ref{ao22}) as an inspiration to replace the r.~h.~s.~of
(\ref{ao21}) by $R\,P(\Sigma\cap B)$. In a classical setting, this is known to be
a good approximation if $R$ is small compared to the radius of curvature of $\Sigma$.
In our polygonal setting, this is a good approximation when $\Sigma$ is flat on scales
of order $R$. We say that $\Sigma$ is flat in some ball $B'$ if
$\Sigma\cap B'$ is given by the sub-level set of a function with small Lipschitz
constant. 
\end{itemize}

\medskip

After these approximations, the dual problem simplifies to
\begin{align}\label{ao24}
I(B)=\sup\big\{\,\frac{W(\Sigma,B)}{P(\Sigma,B)}
\,\big|\,\Sigma\subset B\;
\mbox{polygon of side length $\ge 1$}\,\big\}.
\end{align}
The previous steps have indeed been made rigorous in the upcoming work \cite{COPW}. In view of the characterizing property of white noise we find for
the standard deviation of the numerator in (\ref{ao24}) that
$\sqrt{\mathbb{E}(W(\Sigma,B))^2}$ $=\sqrt{\lambda(\Sigma)(1-\frac{\lambda(\Sigma)}{\lambda(B)})}$ so that the maximization problem
in (\ref{ao24}) has the flavor of the classical isoperimetric problem (in two space dimensions),
motivating the letter $I$. 
In this setting the result of Leighton-Shor takes the form
\begin{align}\label{ao25}
I(B_L)\sim\ln^{3/4}L\quad\mbox{with overwhelming probability}.
\end{align}
We note for later reference that for fixed but arbitrary $\Sigma$,
$\frac{W(\Sigma,B)}{P(\Sigma,B)}$ is Gaussian random variable
of variance
$\frac{\sqrt{\lambda(\Sigma)(1-\frac{\lambda(\Sigma)}{\lambda(B)})}}{P(\Sigma,B)}\leq\frac{\sqrt{\min\{\lambda(\Sigma)
,\lambda(B-\Sigma)\}}}{P(\Sigma,B)}$, 
which by the classical isoperimetric
inequality is $O(1)$. Hence by Gaussian concentration, which transmits to the supremum of
random variables, we have
\begin{align}\label{ao26}
|I(B)-\mathbb{E}I(B)|\lesssim 1\quad\mbox{with overwhelming probability}.
\end{align}
In particular, we learn that (\ref{ao25}) is equivalent to the same statement just
on the level of the expectation, i.~e.~$\mathbb{E}I(B_L)\sim\ln^{3/4}L$.

\medskip

Let us revisit the Approximations 1 and 2; for that purpose, we consider
a maximizer $\Sigma^*$ in (\ref{ao24}) for $B=B_L$.
Approximation 1 is legitimate if the micro scale $1$ is much smaller 
than the scale on which $\Sigma^*$ is flat.
Approximation 2 is legitimate if $\Sigma^*$ is flat on scale $\ln^{3/4}L$.
Hence the approximations are self-consistent if $\Sigma^*$ is flat
up to scales that are large w.~r.~t.~$\ln^{3/4}L$. 
Hence it is natural to explore the regularity of $\Sigma^*$, which we will do next.


\subsection{Ried-Wagner on regularity, applied to the isoperimetric problem} 
Ried and Wagner \cite[Theorem 1.1]{RW} (rigorously) derived a regularity result for a minimizer $\Sigma^*$ 
of (\ref{ao24}) with $B=B_L$ which informally justifies the Approximation 1 and 2 
of Subsection \ref{SS:LS}.
We present their result in a way that is targeted to this justification,
and in a slightly strengthened version following \cite{COP}. For the discussion, it is convenient to
slightly modify (\ref{ao24}) by replacing the relative perimeter $P(\Sigma,B)$ within $B$
by the absolute one $P(\Sigma)$ and to return to $W(\Sigma)=\int_{\Sigma}\xi$,
while mimicking the symmetry (\ref{ao28}) in maximizing the absolute value $\frac{|W|}{P}$.
After this modification, in view of the elementary inequality $\frac{W\pm W'}{P+P'}$ 
$\le\max\{\frac{|W|}{P},\frac{|W'|}{P'}\}$, the maximization automatically extends to 
sums and differences of $\Sigma$'s. It is thus convenient to express the boundaries of
such sums and differences by oriented curves (that are neither simply connected nor 
connected); then $P(\Sigma)$ corresponds to their total length, and $W(\Sigma)$ 
to the integral of $\xi$ against the winding number of the curve:
\begin{align}\label{ao29}
I(B)=\sup\big\{\,\frac{W(\Sigma)}{P(\Sigma)}
\,\big|\,\Sigma\subset B\;
\mbox{polygonal curve of edge length $\ge 1$}\,\big\}.
\end{align}
This point of view\footnote{which in geometric analysis would be called currents} 
was first explicitly adopted in \cite[Subsection 5.1]{DWARXIV}.
In fact, we first note that the maximizer $\Sigma^*$ of (\ref{ao29}) 
is easily seen to be a maximizer of the additive problem
\begin{align}\label{ao27}
\epsilon W(\Sigma)-P(\Sigma)\quad\mbox{where}\quad\epsilon:=\frac{1}{I(B_L)},
\end{align}
which was the starting point in \cite{RW}.

\medskip

The first ingredient of \cite[Lemma 5.1]{RW} is another ``algebraic'' observation that locally allows
to interpret $\epsilon W(\Sigma)$ in (\ref{ao27}) as a perturbation: If $\Sigma$ coincides with
$\Sigma^*$ outside of some ball $B'\subset B_L$, we have by definition (\ref{ao29}) 
which allows us to think of $\Sigma^*-\Sigma$ as described by an oriented curve 
of total length given by the perimeter $P(\Sigma\triangle\Sigma^*)$ of the symmetric difference:
\begin{align*}
W(\Sigma^*)-W(\Sigma)
\le I(B')P(\Sigma^*\triangle\Sigma)
\le I(B')\big(P(\Sigma^*\cap B')+P(\Sigma\cap B')\big).
\end{align*}
Hence any minimizer $\Sigma^*$ of (\ref{ao27}) 
is a quasi-minimizer of the minimal (one-dimensional) area problem:
For any ball $B'$ we have
\begin{align}\label{ao03}
\lefteqn{P(\Sigma^*\cap B')\le\frac{1+\theta(B')}{1-\theta(B')}P(\Sigma\cap B')}\nonumber\\
&\quad\mbox{provided}\quad\Sigma^*\triangle\Sigma\subset B'
\quad\mbox{and}\quad\theta(B'):=\frac{I(B')}{I(B_L)}<1.
\end{align}

\medskip

The second ingredient is of stochastic nature (as before with the implicit understanding
that the statements hold with overwhelming probability): We start from (\ref{ao25}),
which by stationarity of $\xi$ and thus shift-invariance in law of (\ref{ao24}) holds 
no matter what the center of the ball is:
\begin{align}\label{ao70}
I(B)\sim\ln^{3/4}l\quad\mbox{where $l\gg 1$ is the radius of $B$}.
\end{align}
Hence in order to control
the supremum over a class of balls, it suffices to control the 
fluctuations $I(B)-\mathbb{E}I(B)$. By monotonicity of $I$ in $B$,
it is enough to consider $O((L/l)^2)$ many balls $B$ of radius $l$ that cover $B_L$. Hence by Gaussian concentration, 
recall the discussion around (\ref{ao26}),
taking the supremum comes with the additive cost of the square root of the logarithm of this number:
\begin{align*}
\max_{B\subset B_L\;\text{of radius $\le l$}} I(B)\lesssim
\ln^{3/4}l+\ln^{1/2}(L/l).
\end{align*}
Hence we obtain for the $\theta$ defined in (\ref{ao03})
\begin{align*}
\max_{B'\subset B_L\;\text{of radius $\le l$}}\theta (B')\lesssim(\frac{\ln l}{\ln L})^{3/4}
+(\frac{1}{\ln L})^{1/4},
\end{align*}
where the last term is dominant for sufficiently small $l$, as relevant below.
%
%

\medskip

The last ingredient is of deterministic and geometric nature,
namely a regularity theory for quasi-minimizer of parameter $\theta$
in all balls $B'\subset B$ of radius $\le l$:
\begin{align}\label{ao04}
P(\Sigma^*\cap B')\le\frac{1+\theta}{1-\theta}P(\Sigma\cap B')
\quad\mbox{provided}\;\Sigma^*\triangle\Sigma\subset B'\nonumber\\
\stackrel{\sqrt{\theta}\ln l\ll 1}{\Longrightarrow}\quad 
\mbox{oscillation of the normal of $\Sigma^*$ in $B$}\;\lesssim\;\sqrt{\theta}\,\ln l,
\end{align}
which in particular means that $\Sigma^*$ is flat in $B$.
Statement (\ref{ao04}) arises from the combination of an elementary geometric inequality
obtained from comparing $\partial\Sigma\cap B'$ to a line segment, and that can be brought
into the form of an estimate
of the (squared) $L^2$-oscillation\footnote{called the excess} of the normal,
namely $\inf_{\bar\nu\,\mbox{\tiny const}}\int_{\partial\Sigma^*\cap B'}|\nu^*-\bar\nu|^2$ 
$\lesssim\theta P(\Sigma^*\cap B')$, and an iteration\footnote{called a Campanato iteration} over 
the $\ln l$ dyadic scales from $l$ down to the micro scale unity on which the normal is constant.

\medskip

The three ingredients combine to flatness on scales that are sub-algebraically small
w.~r.~t.~$L$ 
%
\begin{align*}
&\ln l\ll\ln^{1/8} L\quad\Longrightarrow\quad\Sigma^*\;\mbox{is flat on scales}\;l.
\end{align*}
%
\ignore{
{\color{blue}
With the blue estimate above
\begin{align*}
&\theta\lesssim(\frac{\ln l}{\ln L})^{3/4}+\frac{1}{\ln^{1/4}L}\\
&\mbox{hence}\qquad\theta\ll1\qquad\mbox{in the regime}\qquad\ln l\ll\ln L, \ln L\gg1.
\end{align*}
The regime is the optimal one (but the quantitative control on $\theta$ is not).
When converting almost-minimality of the perimeter into flatness:
\begin{align*}
\sqrt{\theta}\ln l\ll1\qquad\mbox{implied by}\qquad\ln l\ll\ln^{1/8}L.
\end{align*}
}
}
Obviously, this upper bound on $l$ is still much larger than $\ln^{3/4} L$, which 
shows self-consistency of the Approximations 1) and 2) in Subsection \ref{SS:LS}.


\subsection{Cheng-Otto-Palmieri on the geometric linearization}\label{SS:COP}

Given a configuration $\Sigma$ and an (integer) length $1\ll l\ll L$, 
let the coarse-grained configuration $\Sigma_{\ge l}$ denote the polygon 
that arises from only keeping every $l$-th vertex. We write
\begin{align}\label{ao05}
\left\{\begin{array}{rcl}
P(\Sigma)&=&P(\Sigma_{\ge l})+\big(P(\Sigma)-P(\Sigma_{\ge l})\big)\quad\mbox{and}\\[1ex]
W(\Sigma)&=&W(\Sigma_{\ge l})+\big(W(\Sigma)-W(\Sigma_{\ge l})\big).
\end{array}\right.
\end{align}
Under the assumption that the second summands are much smaller than the
first ones in (\ref{ao05}), we obtain for every exponent $\alpha>0$ to be chosen soon that
%
%
%
\begin{align*}
\lefteqn{\frac{1}{\alpha}(\frac{W(\Sigma)}{P(\Sigma)})^\alpha
\approx\frac{1}{\alpha}(\frac{W(\Sigma_{\ge l})}{P(\Sigma_{\ge l})})^\alpha
+(\frac{W(\Sigma_{\ge l})}{P(\Sigma_{\ge l})})^\alpha}\nonumber\\
&\times\frac{1}{P(\Sigma_{\ge l})}
\Big(\frac{P(\Sigma_{\ge l})}{W(\Sigma_{\ge l})}
\big(W(\Sigma)-W(\Sigma_{\ge l})\big)-\big(P(\Sigma)-P(\Sigma_{\ge l})\big)\Big).
\end{align*}

\medskip

Under the assumption that
\begin{align}\label{ao11}
\Sigma\;\mbox{is flat on scales}\;l,
\end{align}
the edges of $\Sigma_{\ge l}$ have length $\gtrapprox l$.
The second parts in (\ref{ao05}) can be naturally written as the sum over
$N\approx\frac{P(\Sigma_{\ge l})}{l}$ summands we index by $n=1,\cdots,N$
and each involving an open polygonal curve
$\Sigma_n$ closed by a straight line segment $\Sigma_{\ge l,n}$ of length $l_n$
%
%
also reversed here
\begin{align}\label{ao09}
\lefteqn{\frac{1}{\alpha}(\frac{W(\Sigma)}{P(\Sigma)})^\alpha
\approx\frac{1}{\alpha}(\frac{W(\Sigma_{\ge l})}{P(\Sigma_{\ge l})})^\alpha}\nonumber\\
&+\epsilon^{-\alpha}\big(\sum_{n=1}^Nl_n\big)^{-1}\sum_{n=1}^N
\Big(\epsilon
\big(W(\Sigma_n)-\big(P(\Sigma_n)-l_n\big)\Big)
\end{align}
%
where $\epsilon:=\frac{P(\Sigma_{\ge l})}{W(\Sigma_{\ge l})}$.
Still under assumption (\ref{ao11}), we may write
\begin{align}\label{ao10}
\partial\Sigma_n=\mbox{graph of}\;h_n\;\mbox{over the line segment}\;\Sigma_{\ge l,n},
\end{align}
%
so that in the corresponding coordinate system,
\begin{align*}
&\epsilon
W(\Sigma_n)-\big(P(\Sigma_n)-l_n\big)=\epsilon W_n(h_n)-\int_0^{l_n}dx\big(\sqrt{1+(\frac{dh_n}{dx})^2}-1\big)\nonumber\\
&\mbox{with}\quad W_n(h):=\int_0^{l_n}dx\int_0^{h(x)}dy\xi_n(x,y),
\end{align*}
where by the invariance in law of white noise $\xi$ under
translation (stationarity) and rotation (isotropy), provided $\Sigma_{\geq l}$ is deterministic, $\xi_n$ is a new instance of white noise.

\medskip

Since (\ref{ao11}) means $|\frac{dh_n}{dx}|\ll 1$, we may expand the square root to the effect of
\begin{align}\label{ao08}
\epsilon W(\Sigma_n)-\big(P(\Sigma_n)-l_n\big)
\approx\epsilon W_n(h_n)-D(h_n),
\end{align}
where $W_n=_{\text{law}}W$ and $D$ are the functionals defined in Subsection \ref{SS:setting} (with $l_n$ replacing $L$).
%
%
Approximating the area functional $P$ by the Dirichlet integral $D$, an instance of
a harmonic approximation, amounts to a geometric linearization, 
and is well-known from the theory of minimal surfaces.
We note that under the rescaling
\begin{align}\label{ao07}
h=\epsilon^{2/3}\hat h\;\;\mbox{we have}\;\;
D(h)=\epsilon^{4/3}D(\hat h),\;\;W(h)=\epsilon^{1/3}\hat W(\hat h),
\end{align}
where by the behavior of the law of white noise under the (anisotropic)
linear transformation $(x,y)=(x,\epsilon\hat y)$, $\hat W$ has the
same law as $W$.
Hence inserting (\ref{ao07}) into (\ref{ao08}) and then (\ref{ao09}) we learn 
that for $\alpha=4/3$
\begin{align}\label{ao10bis}
\frac{3}{4}(\frac{W(\Sigma)}{P(\Sigma)})^{4/3}
\approx\frac{3}{4}(\frac{W(\Sigma_{\ge l})}{P(\Sigma_{\ge l})})^{4/3}
+\big(\sum_{n=1}^Nl_n\big)^{-1}\sum_{n=1}^N(\hat W_n(\hat h_n)-D(\hat h_n)),
\end{align}
under the relations (\ref{ao10}) and (\ref{ao07}), and the assumption (\ref{ao11}).

\medskip

It turns out that the l.~h.~s.~of (\ref{ao10bis}) is approximately maximized
by maximizing the summands on the r.~h.~s.~separately, which in particular means that
coarse-graining and maximization approximately commute
-- in our variational context this is derived by establishing the corresponding upper
and lower bounds separately, see Subsection \ref{SS:cutandpaste} for our geometrically
linear setting.

\medskip

This provides a motivation to consider (\ref{ao15}) next to $I_L:=I(B_L)$, cf.~(\ref{ao29}).
%
%
%
%
Note that in view of (\ref{ao11}) we have that the edge lengths $l_n$
of $\Sigma_{\ge l}$ are $\gtrapprox l$. 
Hence assuming that the coarse-grained
maximizer is close to achieving a maximum of the coarse-grained problem, i.~e.~close to
maximizing $W(\Sigma)/P(\Sigma)$ over all polygonal $\Sigma\subset B_L$ with  
edge length $\ge l$, we obtain from the scaling invariance in law of $W/P$
that $W(\Sigma_{\ge l}^*)/P(\Sigma_{\ge l}^*)$ $\approx_{\rm law} I_{L/l}$.
Likewise assuming that $\hat h_n^*$ are close to maximizing $\hat W_n(\hat h_n)-D(\hat h_n)$,
we obtain $\hat W_n(\hat h_n^*)-D(\hat h_n^*)$ $\approx_{\rm law}l_n^*A_{l}$. 
In connection with the strong concentration of $I_L$, cf.~(\ref{ao26}), 
this suggests the functional relation
\begin{align}\label{ao16}
{\textstyle\frac{3}{4}}(\mathbb{E}I_{L})^{4/3}\approx
{\textstyle\frac{3}{4}}(\mathbb{E}I_{L/l})^{4/3}+\mathbb{E}A_{l},
\end{align}
which indeed is an intermediate statement leading to Theorem \ref{T:4} below.


\subsection{Otto-Palmieri-Wagner on the geometrically linear problem}\label{SS:OPW}
The coarse-graining strategy of Subsection \ref{SS:COP} also applies to 
the geometrically linearized problem (\ref{ao15}). In this case, the analogue
$h_{\ge l}$ of $\Sigma_{\ge l}$ is even simpler: It is the piecewise linear interpolation
of $h$ on intervals of length $l$ introduced in Subsection \ref{SS:refine}. 
In this geometrically linear setting, the analogue of (\ref{ao10bis}) is exact:
\begin{align}\label{ao32}
(W-D)(h)/L=(W-D)(h_{\ge l})/L+N^{-1}\sum_{n=1}^N(W_n-D)(h_n)/l,
\end{align}
and now $h_n$ is just the restriction of $h-h_{\ge l}$ 
on the $n$-th subinterval of $[0,L]$ on size $l$, shifted to the reference interval $[0,l]$. 
A simpler version of the 
heuristic argument that lead to (\ref{ao16}) now suggests that we may expect
\begin{align}\label{ao18}
\mathbb{E}A_{L}\approx\mathbb{E}A_{L/l}+\mathbb{E}A_{l};
\end{align}
see Subsections \ref{SS:cutandpaste} and \ref{SS:tackle} below.

\medskip

Note that (\ref{ao18}) corresponds to an additivity of the action per volume $A_L$
in terms of the logarithm $\ln L$ of the ratio between macro and micro scale,
motivating Theorem \ref{T:2}.
This in in turn suggests that there exists an $a^*$ independent of $L$ such that 
\begin{align}\label{ao19}
\mathbb{E} A_L\approx a^*\ln L\quad\mbox{for}\;L\gg 1,
\end{align}
which is a consequence of Theorem \ref{T:1}.
Inserting (\ref{ao19}) into (\ref{ao16}) suggest that
$\frac{3}{4}(\mathbb{E}I_L)^{4/3}\approx a^*\ln L$, that is,
\begin{align}\label{ao20}
\mathbb{E}I_L\approx({\textstyle\frac{4}{3}}a^*\ln L)^{3/4}\quad\mbox{for}\;L\gg 1,
\end{align}
which is consistent with the scaling (\ref{ao25}) established in \cite[Theorem 2]{LS},
and determines the prefactor in terms of the simpler (geometrically linear) problem (\ref{ao15}).
In forthcoming work with Xiaopeng Cheng we establish (\ref{ao20}):

\begin{theorem}\label{T:4} With the $a^*$ from Theorem \ref{T:1} we have
\begin{align*}
\lim_{L\uparrow\infty}\mathbb{E}I_L/\ln^{3/4} L=({\textstyle\frac{4}{3}}a^*)^{3/4}.
\end{align*}
\end{theorem}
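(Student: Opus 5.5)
The plan is to make the functional relation (\ref{ao16}) rigorous with an error that is small compared to $\ln l$, and then to iterate it over scales. Set $f(L):=\frac{3}{4}(\mathbb{E}I_L)^{4/3}$. I would fix an intermediate scale $l$ in the window $1\ll\ln l\ll\ln^{1/8}L$, where the Ried--Wagner regularity (\ref{ao04}) makes $\Sigma^*$ flat. The intermediate statement to prove is
\begin{align*}
\big|f(L)-f(L/l)-a^*\ln l\big|\le \delta(l,L)\,\ln l,
\end{align*}
with $\delta\to0$ in this regime. Telescoping over the $\approx\ln L/\ln l$ blocks of scales then gives $f(L)=(a^*+o(1))\ln L$, which is the claim of Theorem~\ref{T:4}. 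Near the top of the range of scales, where the flatness window is not available relative to the current system size, I would use the crude bound (\ref{ao25}) to absorb the last $O(\ln^{1/8}L)$ worth of logarithmic scales. Throughout, the concentration (\ref{ao26}) allows me to pass freely between $I_L$ and $\mathbb{E}I_L$.

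\emph{Lower bound.} I would start from a near-maximizer $\Sigma'$ of the coarse problem, that is, polygons in $B_L$ with edges of length $\ge l$; by scale invariance its ratio $W/P$ has the law of $I_{L/l}$. On each edge I would graft the maximizer $\hat h_n^*$ of the rescaled linear problem (\ref{ao07}). The gain per edge is then governed by Theorem~\ref{T:1} applied on an interval of length $l_n$ with unit cut-off, and it concentrates around $a^*\ln l$ in the $\|\cdot\|_{3/2}$ sense. To obtain enough independence I would localize the modifications to thin strips around the edges. This is legitimate because the optimal graphs have height $\ll l$ by the pointwise bounds of \cite{DEP} and \cite{OPW}. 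Near the vertices I would cut off the grafts, which costs $O(1)$ per edge and hence is $o(\ln l)$ per unit length. The difficulty that $\Sigma'$ depends on $\xi$ I would handle in the same way as in the upper bound below: work uniformly over a net of coarse skeletons. Finally I would verify that the Taylor expansions behind (\ref{ao09}) and (\ref{ao08}) produce relative errors of order $\theta+\sup|dh/dx|^2$, which is $o(1)$.

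\emph{Upper bound, which I expect to be the main obstacle.} By (\ref{ao03}) and (\ref{ao04}), the maximizer $\Sigma^*$ of (\ref{ao29}) is flat on scale $l$ with overwhelming probability. Its coarse-graining $\Sigma^*_{\ge l}$ therefore has edges of length $\gtrapprox l$, and (\ref{ao10bis}) holds with a controlled error. The coarse term is bounded by $f(L/l)$ after using concentration. The fine term is bounded by $N^{-1}\sum_n\max_{h}(\hat W_n-D)$. The trouble is that the skeleton is random, so the $\xi_n$ are neither fresh nor independent. I would therefore take a union bound over a discretized family of admissible skeletons, whose number is $\exp(O(N\ln l))$. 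For this to work, two things are needed. First, the per-edge maxima must be controlled uniformly in the endpoint data; this is the locally-uniform-in-boundary-conditions bound of the present paper, with $\|\cdot\|_{3/2}$ tails. Second, one must check that the $3/2$-Orlicz tails of the averaged sum beat the entropy $N\ln l$. Because the per-edge fluctuations are $O(1)$, independent across disjoint strips, and there are $N$ of them, the average fluctuates by $O(N^{-1/3}\cdot\text{polylog})$. This makes the union bound affordable, but only barely. I expect this entropy-versus-concentration balance, together with the bookkeeping of the non-graph pieces near the vertices, to be the real work.
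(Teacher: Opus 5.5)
Your architecture is the one the paper itself sketches in \eqref{ao09}--\eqref{ao16}: coarse-grain in the Ried--Wagner flatness window, linearize, rescale by $\epsilon^{2/3}$, feed in Theorem~\ref{T:1}, iterate. The paper does not prove Theorem~\ref{T:4}; it defers the proof to \cite{COP}, so there is no detailed proof to compare against. However, the step you yourself call the crux is mis-analyzed.

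\textbf{The entropy-versus-concentration step.} Fix a skeleton. The $N$ independent per-edge fluctuations have $O(1)$ $\|\cdot\|_{3/2}$-norms, so they satisfy $\mathbb{P}(N^{-1}\sum_nX_n\ge t)\le\exp(-cNt^{3/2})$ for $t\gtrsim1$. Against an entropy $e^{CN\ln l}$, the union bound therefore forces $t\sim\ln^{2/3}l$. This is an error per unit length that does \emph{not} decay with $N$. Your $N^{-1/3}$ is what one gets by treating the sum, rather than each summand, as having $O(1)$ Orlicz norm.

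The error $\ln^{2/3}l$ is still $o(\ln l)$, so the block scheme survives, but only barely, and only because of the exponent $3/2$. With merely exponential tails, as in Lemma~\ref{L:stoch-bis}, the error would be $O(\ln l)$, the same size as the signal $a^*\ln l$.

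The scheme also needs the per-edge entropy to really be $O(\ln l)$, and that is not automatic. After $h=\epsilon^{2/3}\hat h$, the locally uniform bounds $A^\pm$ absorb only transverse endpoint shifts of size $l$ in $\hat h$. In real space this is $\epsilon^{2/3}l\sim l\ln^{-1/2}L$, so skeleton directions must be resolved to $\ln^{-1/2}L$, and a crude count costs of order $\ln\ln L$ per edge. Consequently you must do one of two things:
\begin{itemize}
\item narrow your window to $(\ln\ln L)^{2/3}\ll\ln l\ll\ln^{1/8}L$; or
\item control the skeleton scale by scale, as Lemma~\ref{L:count} does in the linear setting. This is what the almost-maximizer regularity in the appendix is announced for.
\end{itemize}
You also need a uniform bound for shifts of the endpoints \emph{along} the chord, i.e.\ for the chord length, which $A^\pm$ do not provide.

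\textbf{Two further steps need repair.}

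First, passing ``freely'' between $I_L$ and $\mathbb{E}I_L$ is harmless only after taking expectations. Pathwise, $O(1)$ fluctuations of $I$ become $O(\ln^{1/4}L)$ fluctuations of $\frac34 I^{4/3}$. These dwarf the block increment $a^*\ln l\ll\ln^{1/8}L$ and would add up to $\gg\ln L$ over the $\ln L/\ln l$ blocks. By contrast, the Jensen gap $\mathbb{E}\frac34 I^{4/3}-\frac34(\mathbb{E}I)^{4/3}$ is only $O((\mathbb{E}I)^{-2/3})$. So keep $\frac34 I^{4/3}$ inside the expectation and convert only at the end.

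Second, in the upper bound the fine pieces of $\Sigma^*$ are graphs with \emph{free} breakpoints: $l$ edges of length $\ge1$ at arbitrary positions, over a chord of arbitrary length. They are not graphs that are linear on a fixed unit grid, so Theorem~\ref{T:1} does not bound them directly. You need an extra lemma showing that this variant, in which the edge count rather than the mesh is fixed, still yields $a^*\ln l+o(\ln l)$ per unit length. The lower bound is unaffected, since grid graphs are admissible polygons.

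Minor: the iteration must stop at a system size $L'$ with $\ln L'\approx\ln^8l$, not after $O(\ln^{1/8}L)$ scales. The remainder $f(L')\lesssim\ln L'$ is $o(\ln L)$ precisely because $\ln l\ll\ln^{1/8}L$.
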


Up to the approximation in Subsection~\ref{ss:dual}, Theorem~\ref{T:4} identifies the leading-order constant in the optimal matching problem (Theorem~\ref{T:5}). This prompts to compare the status in $L^\infty$ matching, which is considered here, to the more standard $L^2$ matching. Also there the two-dimensional case is critical, but diverges as $\ln^{1/2}L$ \cite{AKT}. In the $L^2$ setting, a different type of geometric linearization was proposed in \cite{CLPS} and made rigorous in \cite[Theorem 1.1]{AST}. The presently best error estimate was derived in \cite[Theorem 1]{GHO}. 


\subsection{Talagrand on the maximum of a family of Gaussians} 

The problem of controlling $I_L$ from above can be tackled be monitoring the supremum of centered Gaussian variables
\begin{align}\label{m44}
\sup W(\Sigma)/\bar P\quad\mbox{over polygons $\Sigma$ with side-length $\ge1$ and $P(\Sigma)\le\bar P$},
\end{align}
 for every threshold $\bar P>0$ and then optimizing in $\bar P$. This restricted supremum can be treated by the general chaining theory developed by Talagrand \cite[Chapter 2]{T22}, which is a vast generalization of Kolmogorov's continuity argument. To this purpose, one endows the index set $\mathcal{S}(\bar P)$ of polygons in \eqref{m44} with the distance
\begin{align*}
d(\Sigma,\Sigma'):=\sqrt{\mathbb{E}\big(W(\Sigma)-W(\Sigma')\big)^2}=\sqrt{\lambda(\Sigma\Delta\Sigma')}.
\end{align*}
The supremum in \eqref{m44} is then bounded, with overwhelming probability and up to a constant, by a certain functional $\gamma_2(\mathcal{S}(\bar P),d)$, which appropriately measures the size of the metric space $(\mathcal{S}(\bar P),d)$. With this strategy, Talagrand reproved in an efficient way the scaling result \eqref{ao25} of Leighton and Shor (see~\cite[Section 4.7]{T22}).


\subsection{Aizenman-Wehr on random-field Ising model}
The following discrete version of the additive model (\ref{ao27}) is well-known:
$\mathbb{R}^2$ is replaced by the integer lattics $\mathbb{Z}^2$, 
(polygonal) subsets $\Sigma\subset\mathbb{R}^2$ are written in
terms of their characteristic functions $\sigma\colon\mathbb{R}^2\rightarrow\{0,1\}$
and interpreted as spin configurations, the perimeter is mimicked by the attractive
nearest-neighbor spin interaction from the Ising model
\begin{align*}
P(\sigma)=\sum_{z\sim z'}|\sigma(z)-\sigma(z')|,
\end{align*}
and $\xi$ now is white noise on $\mathbb{Z}^2$, i.~e.~a collection $\{\xi(z)\}_{z\in\mathbb{Z}^2}$
of independent standard Gaussian variables, so that
\begin{align*}
W(\sigma)=\sum_{z}\xi(z)\sigma(z)
\end{align*}
plays the role of a random external field term. Note that passing from 
(\ref{ao27}) to this model breaks the rotational symmetry in law.

\medskip

The question whether there is a phase transition despite the absence of thermal
noise has been debated and answered to the negative by Aizenman-Wehr \cite[Corollary 4.3 with $T=0$]{AW}:
Almost surely, there exists a unique minimizer $\sigma^*$ of the Hamiltonian
$P-\epsilon W$ in the sense that any
other configuration $\sigma$ that differs from $\sigma^*$ in only a finite number
of sites does not decrease the Hamiltonian. Note that the differences
$P(\sigma)-P(\sigma^*)$ and $W(\sigma)-W(\sigma^*)$ can be given a sense even if
$W(\sigma^*)$ and $W(\sigma^*)$ are infinite.
 


\subsection{Ding-Wirth on the divergence of the correlation length}
Consider the maximizer $\sigma^*$ of $\epsilon W-P$ on a finite sub-lattice
$B_L\cap\mathbb{Z}^2$ with zero boundary conditions.
The result of \cite{AW} suggests that as $L\uparrow\infty$, 
the influence of the boundary condition on 
expected spin at the center $\mathbb{E}\sigma^*(0)$ fades away, 
i.~e.~that $\lim_{L\uparrow\infty}\mathbb{E}\sigma^*(0)=\frac{1}{2}$.
Ding-Wirth showed that the increasing function $\mathbb{E}\sigma^*(0)$ of $L$
crosses the value $\frac{1}{3}$ when
\begin{align}\label{ao71}
\epsilon\ln^{3/4}L\sim 1.
\end{align}
As pointed out in \cite[Section 2.2]{DW}, the lower bound in (\ref{ao71}) follows easily from the upper bound
in (\ref{ao25}), for which they rely on the result by Talagrand \cite[Theorem 4.7.2]{T22}.
Indeed, on the one hand, if $\mathbb{E}\sigma^*(0)\ge\frac{1}{3}$, 
we must have $\sigma^*\not\equiv 0$ with a sizable probablity. 
On the other hand, if $\epsilon\ln^{3/4}L\ll 1$ we have 
by (\ref{ao25}) with overwhelming probability that for any configuration $\Sigma$,
$\epsilon W(\Sigma)-P(\Sigma)<0$ unless $\Sigma=\emptyset$.



\section{Strategy of proof}

\subsection{Sub- and super-additivity of the maximal action \texorpdfstring{$A$}{A}}\label{SS:cutandpaste}

A basic but crucial observation is that the maximum of $(W-D)/L$ over the
\begin{align}\label{ao56}
&\mbox{space of coarse-grained configurations}\nonumber\\
&:=\big\{\,\bar h\colon[0,L]\rightarrow\mathbb{R}\;
\mbox{continuous and linear on $[(n-1)l,nl]$ for all $n$}\,\big\},
\end{align}
when restricted to null Dirichlet boundary conditions, has the same law as $A_{L/l}$.
As a consequence, for a maximizer $h^*$ of $A_L$, $(W-D)(h^*_{\ge l})/L$ 
is not larger in law than $A_{L/l}$, 
see \eqref{m43} for the definition of $h_{\ge l}$.
We now recall the splitting (\ref{ao32}) and its notation
\begin{align}
h_n&=(h-h_{\ge l})(\cdot+(n-1)l)\label{ao57},\\
W_n(h_n)&=W_{[(n-1)l,nl]}\big(h_{\ge l}+h_n(\cdot-(n-1)l)\big)-W_{[(n-1)l,nl]}\big(h_{\ge l}\big),
\label{ao47}
\end{align}
where $W_I$ indicates that the definition (\ref{e71}) involves the $x$-interval $I$.

\medskip

We apply (\ref{ao32}) to the maximizer $h^*$ of $A_L$, 
so that by the previous observation 
the first r.~h.~s.~term in (\ref{ao32}) is $\le_{\rm law} A_{L/l}$.
Note that by definition, $h_n$ has homogeneous Dirichlet boundary data on $[0,l]$,
so that it is admissible in $A_l$. Note also that as a consequence of translation and shear invariance of white noise,
\begin{align}\label{m33}
(x,y)=(\hat x,\hat y+\phi(x))\qquad\mbox{implies}\quad\xi=_{\text{law}}\hat\xi,
\end{align}
applied to $\phi=h_{\ge l}$, the functional $(W_n-D)/l$ has the same law as $(W_{[0,l]}-D)/l$.
Momentarily ignoring the fact that the latter is wrong if we replace the deterministic
$h$ by the random $h^*$ we
would recover sub-additivity in the weaker form
\begin{align}\label{ao45}
&A_L\le A_{L/l,0}+(L/l)^{-1}\sum_{n=1}^{L/l}A_{l,n}\nonumber\\
&\mbox{with}\quad A_{L/l,0}=_{\rm law}A_{L/l}
\quad\mbox{and, naively,}\quad A_{l,n}=_{\rm law}A_l.
\end{align}
We thus would have in particular
\begin{align}\label{ao46}
\mbox{naively}\quad 
\mathbb{E}A_L\le\mathbb{E}A_{L/l}+\mathbb{E}A_{l},
\end{align}
which is the sub-additive version of the additive (\ref{ao18}).

\medskip

To actually recover (\ref{ao18}), 
 we apply the splitting (\ref{ao32}) again, this time with the
maximizer $\bar h^*$ of $A_L$ when restricted to (\ref{ao56})
playing the role of $h_{\ge l}$, 
and the maximizers $h_n^*$ of $A_{l,n}$ playing the role of $h_n$, one would hope for 
\begin{align}\label{ao39}
&A_L\ge A_{L/l,0}+(L/l)^{-1}\sum_{n=1}^{L/l}A_{l,n}\nonumber\\
&\mbox{with}\quad A_{L/l,0}=_{\rm law}A_{L/l}
\quad\mbox{and, naively,}\quad A_{l,n}=_{\rm law}A_l.
\end{align}


\subsection{Tackling statistical dependencies by introducing bins}\label{SS:tackle}~Let us explain more carefully why the splitting (\ref{ao32}) does not yield (\ref{ao45}).
Recall that the realization $W_n$ of the functional depends on $h_{\ge l}$ via (\ref{ao47}).
%
%
As long as $h_{\ge l}$ is deterministic, the functionals $W_n$ all have the law of $W$ and consequently,
\begin{align}\label{m34}
\mbox{for every deterministic $h_{\ge l}$}\qquad
A_{l,n}=A_{l,n}(h_{\ge l})=_{\text{law}}A_l,
\end{align}
where the random variables $A_{l,n}$ are those appearing in either \eqref{ao45} or \eqref{ao39}.
However, in order to deduce (\ref{ao45}), we apply (\ref{ao32}) to the random $h^*$
and thus random $h_{\ge l}^*$, so that the last statement in (\ref{ao45}) is not correct.

\medskip

In order to retain the identical law in \eqref{m34}, a first idea is to take the maximum over
the space the realization $h_{\ge l}^*$ lives in, namely (\ref{ao56}),
\begin{align}
\sup\{A_{l,n}(\bar h)~|~\bar h~\mbox{coarse grained configuration}~\eqref{ao56}\}\label{m28}
\end{align}
%
%
%
or rather a bounded subset thereof
%
%
defined by suitable bounds we expect $h^*_{\ge l}$ to satisfy with overwhelming probability,
see Subsection \ref{SS:bounds}.

\medskip

Rather than taking
a supremum over such an unwieldy continuum set 
a better idea is to pass to a discrete set, 
defined by constraining the functions to have values in $l\mathbb{Z}$ at the grid points $ln$:
\begin{align}\label{ao48}
\lefteqn{
\mbox{net of coarse-grained configurations}\;{\mathcal N}}\nonumber\\
&:=\big\{\,\bar h\;\mbox{coarse-grained configuration}\,\big|\,
\bar h(nl)\in l\mathbb{Z}\;\mbox{for all $n$}\,\big\}.
\end{align}
For a given coarse-grained configuration $\bar h$,
we denote by $\Pi(\bar h)$ the closest configuration in the net ${\mathcal N}$ as determined by
\begin{align}\label{ao62}
\bar h(nl) \in \big[\Pi(\bar h)(nl)-l/2 , \Pi(\bar h)(nl)+l/2 \big)\nonumber\\
\mbox{so that}\quad|(\bar h-\Pi(\bar h))(nl)|\le l/2;
\end{align}
this projection clearly bins the space of coarse-grained configurations.

\medskip

The idea is to take the supremum \eqref{m28} in two steps: For every configuration $\bar h\in\mathcal{N}$, we first introduce the supremum over all coarse-grained configurations with the same projection $\bar h$,
\begin{align}
&A_{l,n}^+(\bar h):=\sup\{A_{l,n}(\bar h')~|~\mbox{$\bar h'$ with $\Pi(\bar h')=\bar h$}\},\nonumber\\
&\mbox{so that}\quad A_{l,n}(h_{\ge l}^*)\le A_{l,n}^+(\Pi(h_{\ge l}^*)).\label{m16}
\end{align}
The advantage is that we now can take the supremum over the discrete set of objects $\Pi(h^*_{\ge l})\in\mathcal{N}$ (instead of the continuum $h^*_{\ge l}$), at the expense of having to deal with the (larger) action $A_{l,n}^+$. 

\medskip

Since \eqref{m34} holds in the stronger form
\begin{align*}
A_{l,n}(\cdot+\bar h)=_{\text{law}}A_{l,n}(\cdot)\quad\mbox{(stationarity)},
\end{align*}
also the
variables $\{A_{l,n}^+(\bar h)\}_{\bar h, n}$ have all the same law. Hence it suffices to give a more explicit description of the particular one with $\bar h=0$ and $n=1$. Using the orthogonality relation
\begin{align*}
&D(h)=D(a_h)+D(h-a_h)\\
&\mbox{where}\quad a_h:=\mbox{linear function such that $a_h(0)=h(0),a_h(l)=h(l),$}
\end{align*}
and recalling the definition \eqref{ao47} of $W_n$ for $n=1$ allows to write $ A_{l,1}^+(0)$ as 
\begin{align}\label{m38}
 A_l^+:=&\max_{|y_0|,|y_1|\le l/2}\max\big\{\,(W-D)(h)/l-(W-D)(a_h)/l\,\big|\,\nonumber\\
&h\colon[0,l]\rightarrow\mathbb{R}~\mbox{is continuous and linear on
$[n-1,n]$ for $n=1,\cdots,l$}\nonumber\\&\mbox{with}\,\,h(0)=y_0,h(l)=y_1\big\}.
\end{align}

\medskip

With this notation at hand, let us now make the strategy in \eqref{ao45} rigorous at the level of the variables $ A^+$. Consider the linear $a_h$, which now interpolates the boundary conditions of $h$ on the macroscopic interval $[0,L]$, and subtract its action from both sides on \eqref{ao32},
\begin{align}\label{m30}
&(W-D)(h)/L-(W-D)(a_h)/L\nonumber\\&=(W-D)(h_{\ge l})/L-(W-D)(a_h)/L+N^{-}1\sum_{n=1}^N(W_n-D)(h_n)/l.
\end{align}
Evaluating at the maximizer $h^*$ of $ A_L^+$, the difference on the r.~h.~s.~is $\le_{\text{law}} A^+_{L/l}$, so that the above reasoning yields the following:

\begin{lemma}\label{L:3}
\begin{align}\label{ao35bis}
 A^+_L\le A^+_{L/l,0}+(L/l)^{-1}\sum_{n=1}^{L/l} A_{l,n}^+(\Pi(h^*_{\ge l}))
\end{align}
where as before $ A_{L/l,0}^{+}=_{\rm law} A_{L/l}^+$. 
\end{lemma}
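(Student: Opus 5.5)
We first fix notation. Let $A_L^+$ be the quantity (\ref{m38}) with $l$ replaced by $L$. Let $h^*$ denote its maximizer, with boundary values $y_0,y_1$ satisfying $|y_0|,|y_1|\le L/2$. The argument is a pathwise (deterministic) evaluation of the splitting identity (\ref{m30}) at $h=h^*$. This is followed by a bound of each term on the right-hand side by a random variable whose law is identified through the invariances (\ref{m33}) of white noise.

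\textbf{Step 1: the identity.} We verify (\ref{m30}) for any piecewise linear $h$ on $[0,L]$ with arbitrary boundary values. The $W$ part is additive over the subintervals $[(n-1)l,nl]$. By definition (\ref{ao47}), $W_{[(n-1)l,nl]}(h)=W_{[(n-1)l,nl]}(h_{\ge l})+W_n(h_n)$. For $D$, we use that $h-h_{\ge l}$ vanishes at the grid points while $h_{\ge l}$ is linear on each subinterval. Integrating by parts on each cell then kills the cross term, so $D(h)=D(h_{\ge l})+\sum_n D(h_n)$. Subtracting $(W-D)(a_h)/L$ from both sides, and noting $a_h=a_{h_{\ge l}}$ since $h_{\ge l}$ has the same boundary values, gives (\ref{m30}) with $N=L/l$.

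\textbf{Step 2: the coarse term.} Since $h^*_{\ge l}$ is a coarse-grained configuration with the same boundary values, we have
\[
(W-D)(h^*_{\ge l})/L-(W-D)(a_{h^*})/L\le A^+_{L/l,0},
\]
where $A^+_{L/l,0}$ is defined as the supremum of $(W-D)(\bar h)/L-(W-D)(a_{\bar h})/L$ over coarse-grained $\bar h$ in (\ref{ao56}) with $|\bar h(0)|,|\bar h(L)|\le L/2$. Its law follows from the isotropic scaling (\ref{ao68}) with $\lambda=\mu=l$. Under this scaling, coarse-grained configurations on $[0,L]$ become microscopic configurations on $[0,L/l]$. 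The boundary constraint becomes $|\hat y_i|\le (L/l)/2$, and the action per length is preserved, since $W-D$ scales like length. Hence $A^+_{L/l,0}=_{\rm law}A^+_{L/l}$.

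\textbf{Step 3: the fine terms.} Each $h^*_n$ vanishes at $0$ and $l$ and is piecewise linear on unit intervals. Hence, by the definition of $A_{l,n}(\bar h')$ as the maximal action of such perturbations for the shifted functional (\ref{ao47}), we have $(W_n-D)(h^*_n)/l\le A_{l,n}(h^*_{\ge l})$. We then apply (\ref{m16}), namely $A_{l,n}(h^*_{\ge l})\le A^+_{l,n}(\Pi(h^*_{\ge l}))$, and sum over $n$ to obtain (\ref{ao35bis}).

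The main obstacle is the bookkeeping in Step 3. One must check that $A_{l,n}(\bar h')$ depends on $\bar h'$ only through its restriction to the $n$-th cell. One must also check that the supremum defining $A^+_{l,n}$ over all $\bar h'$ with $\Pi(\bar h')=\bar h$ reduces to the local description (\ref{m38}): the cell endpoints range over $\bar h(nl)+[-l/2,l/2)$. The reduction uses the orthogonality $D(h)=D(a_h)+D(h-a_h)$ together with the shift invariance (\ref{m33}). For the lemma itself, however, only the pointwise inequality is needed, and that holds trivially because $h^*_{\ge l}$ is admissible in the supremum. The laws are needed only for the subsequent use of the lemma.
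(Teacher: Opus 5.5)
Your proposal is correct and follows the paper's argument. You evaluate the splitting identity \eqref{m30} at the maximizer $h^*$ of $A^+_L$. You bound the coarse-grained difference by the supremum over coarse-grained configurations with $|\bar h(0)|,|\bar h(L)|\le L/2$, whose law is that of $A^+_{L/l}$ by isotropic scaling. You bound each fine term by $A_{l,n}(h^*_{\ge l})\le A^+_{l,n}(\Pi(h^*_{\ge l}))$, as in \eqref{m16}. Your checks of the orthogonal splitting of $D$ and of the scaling $\lambda=\mu=l$ spell out what the paper leaves implicit.
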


We employ a similar strategy for the super-additivity statement \eqref{ao39},
which also relies on the splitting \eqref{ao32} and is captured in terms of
\begin{align}\label{m39}
 A_l^-:=&\min_{|y_0|,|y_1|\le l/2}\max\big\{\,(W-D)(h)/l-(W-D)(a_h)/l\,\big|\,\nonumber\\
&h\colon[0,l]\rightarrow\mathbb{R}~\mbox{is continuous and linear on
$[n-1,n]$ for $n=1,\cdots,l$}\nonumber\\&\mbox{with}\,\,h(0)=y_0,h(l)=y_1\big\}.
\end{align}
For every couple of boundary conditions $y_0,y_1$, a competitor for the inner problem is constructed like in Subsection~\ref{SS:cutandpaste} by first picking the maximizer $\bar h^*$
over coarse-grained configurations \eqref{ao56} and then pasting $h_n^*$
optimizing $W_n-D$ with homogeneous Dirichlet boundary conditions. Analogously to \eqref{m16},
\begin{align*}
A_{l,n}(\bar h^*)\ge A_{l,n}^-(\Pi(\bar h^*))
\end{align*}
where the random variables $ A^-_{l,n}(\bar h)$ are defined by
\begin{align}
 &A_{l,n}^-(\bar h):=\inf\{A_{l,n}(\bar h')~|~\mbox{$\bar h'$ with $\Pi(\bar h')=\bar h$}\},\nonumber\\
&\mbox{and satisfy}\quad  A_{l,n}^-(\bar h)=_{\text{law}} A^-_l.\label{m17}
\end{align}
This two-scale construction of a competitor yields:

\begin{lemma}\label{L:3-}
\begin{align}\label{ao50}
 A^-_L\ge A^-_{L/l,0}+(L/l)^{-1}\sum_{n=1}^{L/l} A_{l,n}^-(\Pi(\bar h^*))
\end{align}
where as before $ A^-_{L/l,0}=_{\rm law} A^-_{L/l}$.
\end{lemma}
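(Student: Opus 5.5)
The plan is to fix arbitrary boundary values $|y_0|,|y_1|\le L/2$ and build an explicit competitor for the inner maximum in the definition of $A^-_L$ (the analogue of \eqref{m39} with $l$ replaced by $L$). Its action gain over $a_h$ should be at least the right-hand side of \eqref{ao50}. Since the right-hand side does not depend on $(y_0,y_1)$, taking the minimum over the boundary values then gives the claim.

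\textbf{Coarse-scale step.} Let $\bar h^*$ be the maximizer of $(W-D)(\bar h)/L-(W-D)(a_{\bar h})/L$ over coarse-grained configurations \eqref{ao56} with $\bar h(0)=y_0$, $\bar h(L)=y_1$. The relevant quantity is $(W-D)(\bar h^*)/L-(W-D)(a)/L$, where $a$ is the linear interpolant of $y_0,y_1$. By the isotropic scaling \eqref{ao68} with $\lambda=\mu=l$, the coarse problem on $[0,L]$ with $l$-piecewise linear functions and boundary data $y_0,y_1$ has, jointly in $(y_0,y_1)$, the law of the microscopic problem on $[0,L/l]$ with data $y_0/l,y_1/l$. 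Since $|y_i|\le L/2$ becomes $|y_i/l|\le (L/l)/2$, the family of these maxima has the law of the family defining $A^-_{L/l}$. Hence
\[
A^-_{L/l,0}:=\min_{|y_0|,|y_1|\le L/2}\Big(\max_{\bar h}(W-D)(\bar h)/L-(W-D)(a)/L\Big)=_{\rm law}A^-_{L/l},
\]
and for our fixed $(y_0,y_1)$ the coarse maximum is at least $A^-_{L/l,0}$. The essential point is that $A^-_{L/l,0}$ is defined as a minimum over all boundary data, so it is a single random variable independent of $(y_0,y_1)$.

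\textbf{Fine-scale step.} For each $n$, let $h_n^*$ be a maximizer of $(W_n-D)/l$ with homogeneous Dirichlet data on $[0,l]$, where $W_n$ is built from $\bar h^*$ via \eqref{ao47}. Its value is $A_{l,n}(\bar h^*)$. Set $h:=\bar h^*+\sum_n h_n^*(\cdot-(n-1)l)$. This $h$ is admissible: it is linear on unit intervals, has the boundary values $y_0,y_1$, and satisfies $h_{\ge l}=\bar h^*$. The exact splitting \eqref{m30} then gives
\[
(W-D)(h)/L-(W-D)(a)/L = \big[(W-D)(\bar h^*)-(W-D)(a)\big]/L+(L/l)^{-1}\sum_n A_{l,n}(\bar h^*).
\]
This uses \eqref{ao32}, whose cross terms in $D$ vanish because each $h_n^*$ vanishes at the grid points and $\bar h^*$ is linear on each cell. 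Next, by the definition \eqref{m17} of $A^-_{l,n}$ as an infimum over the bin, $A_{l,n}(\bar h^*)\ge A^-_{l,n}(\Pi(\bar h^*))$. Combining this with the coarse-scale step and taking the minimum over $(y_0,y_1)$ yields \eqref{ao50}.

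\textbf{Main obstacle.} The delicate point is bookkeeping rather than estimation: one must check that the splitting \eqref{m30} is an exact identity for general (nonzero) boundary data. The term $W$ is linear in the subgraph region, so $W(h)=W(\bar h^*)+\sum_n W_n(h_n)$ by \eqref{ao47}. For $D$, the orthogonality $D(h)=D(\bar h^*)+\sum_n D(h_n)$ holds because each $h_n$ vanishes at both endpoints of its cell while $d\bar h^*/dx$ is constant there. A second point needs care: $\bar h^*$ depends on $(y_0,y_1)$ and is random, so the right-hand side of \eqref{ao50} is evaluated at the $\bar h^*$ corresponding to the minimizing boundary data. The lemma only asserts a pathwise inequality with these random indices, and the statistical dependence on $\Pi(\bar h^*)$ is handled later via the net $\mathcal N$. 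With this understanding the lemma is exactly the pathwise construction above.
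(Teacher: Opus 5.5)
Your proposal is correct and matches the paper's argument: for each boundary pair, take the coarse-grained maximizer $\bar h^*$ (which by isotropic rescaling produces $A^-_{L/l,0}=_{\rm law}A^-_{L/l}$), paste the fine maximizers $h_n^*$ with homogeneous Dirichlet data via the exact splitting, bound $A_{l,n}(\bar h^*)\ge A^-_{l,n}(\Pi(\bar h^*))$ by the infimum over the bin, and evaluate at the minimizing boundary data. One slip in your opening paragraph: only $A^-_{L/l,0}$ is independent of $(y_0,y_1)$, while the sum depends on the boundary data through $\bar h^*=\bar h^*_{y_0,y_1}$. You resolve this correctly at the end by applying the pointwise inequality at the minimizing pair, so the right-hand side of the lemma uses that pair's $\bar h^*$.
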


\subsection{Connecting the sub-additive \texorpdfstring{$A^+$}{A+} to the super-additive
\texorpdfstring{$A^{-}$}{A-}}\label{SS:connect}

In Subsections~\ref{SS:count} and \ref{SS:bounds}, we will deduce from Lemmas~\ref{L:3} and~\ref{L:3-}
\begin{align}
\mathbb{E} A^+_L&\le \mathbb{E} A^+_{L/l}+\mathbb{E} A_{l}^++O(1),\label{ao33}\\
\mathbb{E} A^-_L&\ge \mathbb{E} A^-_{L/l}+\mathbb{E} A_{l}^-+O(1),\label{ao51}
\end{align}
 cf.~Lemma~\ref{L:subsuper}, which will imply the convergence of $\mathbb{E} A^\pm_L/\ln L$. Since by definitions \eqref{m38} \& \eqref{m39}
\begin{align}\label{m26}
 A^-_L\le A_L\le A^+_L,
\end{align}
in order to get a limit for $A$, we need to control $ A^+$ from above by $ A^-$, as achieved in Lemma~\ref{L:pm} below. The strategy of monitoring increasing and decreasing quantities that sandwich the one of interest like in \eqref{m26} is a common strategy. The monotonicity of $A^+$ and $A^-$ typically arises from pasting and cutting arguments; they have been used to establish extensivity of the optimal energy in pattern formation \cite{ACO,OV} and in quantitative stochastic homogenization of convex functionals \cite{AS}.

\medskip

To this end, we first observe that the contribution of the linear part in the definitions \eqref{m38} \& \eqref{m39} of $A_L^\pm$ is negligible on the logarithmic scale,
\begin{align}
&\big\|\max_{|y_0|,|y_1|\le L/2}|W-D|(a_{y_0,y_1})/L\big\|_2\lesssim1,\quad\mbox{where}\nonumber\\
&a_{y_0,y_1}:=\mbox{linear function such that $a_{y_0,y_1}(0)=y_0,a_{y_0,y_1}(L)=y_1$}.\label{m40}
\end{align}
Indeed by scaling, the law of $\max_{|y_0|,|y_1|\le L/2}|W-D|(a_{y_0,y_1})/L$ does not depend on $L$, so that it is enough to show the finiteness of its $\|\cdot\|_2$ norm. This in turn follows from standard chaining arguments; for the convenience of the reader, we give an alternative proof in the appendix.

\medskip

The strategy now is the following:
One starts from the $A^+$-maximizer on $[L/4,3L/4]$, denoting by $y_0^*$ and $y_1^*$ its random
boundary data, which satisfy $|y_0^*|,|y_1^*|\le L/4$. Given $(y_0,y_1)$ with $|y_0|,|y_1|\le L/2$, one considers the two maximizers of $W-D$ on $[0,L/4]$ 
and $[3L/4,L]$ with boundary data $(y_0,y_0^*)$ and $(y_1^*,y_1)$, respectively. Concatenation yields a competitor for $A_L^-$. Note that $|y_0|\le L/2$ and $|y_0^*|\le L/4$, which is larger than $1/2$ times the interval size of $[0,L/4]$, which motivates the introduction of
\begin{align}
&\mbox{$\tilde A^-_l$ defined as $A^-_l$ in \eqref{m39}}\nonumber\\
&\mbox{but with $|y_0|,|y_1|\le l/2$ replaced by $|y_0|,|y_1|\le 2l$.}\label{m41}
\end{align}
Hence by definition \eqref{m38} of $A_L^+$ we obtain
\begin{align}
\lefteqn{A_L^{-}\ge\frac{1}{2}A_{L/2,0}^++\frac{1}{4}\tilde A_{L/4,1}^{-}
+\frac{1}{4}\tilde A_{L/4,2}^{-}+X}\label{m23}\\
&\mbox{with}\quad A_{L/2,0}^+=_{\rm law}A_{L/2}^+,\;\tilde A_{L/4,n}^-=_{\rm
law}\tilde A_{L/2}^-,\nonumber
\end{align}
where $X$ accounts for the action of the linear parts in the other four terms in \eqref{m23}, and which in view of \eqref{m40} is controlled by
\begin{align}\label{m42}
\|X\|_2\lesssim1.
\end{align}

\medskip

It remains to argue that the additional factor of 4 in the definition \eqref{m41} of $\tilde A^-_l$
is negligible on the logarithmic energy scale:
\begin{align}
\tilde A_L^{-}\ge_{\text{law}}A_L^-+O(1).\label{m24}
\end{align}
Indeed, by definition \eqref{m41} we have
\begin{align*}
\tilde A_L^-=\min\big\{ A^-_{L,\bar y_0,\bar
y_1}~\mbox{over all}~\bar y_0,\bar y_1\in L\{\textstyle{-\frac{3}{2},-\frac{1}{2},\frac{1}{2},\frac{3}{2}}\}\big\}
\end{align*}
where $ A_{L,\bar y_0,\bar y_1}^-$ is defined by changing the constraint in the definition \eqref{m39} of $ A_L^-$
\begin{align*}
&\mbox{from $|y_0|,|y_1|\le{\textstyle{\frac{L}{2}}}$ to $|y_0-\bar y_0|,|y_1-\bar y_1|\le{\textstyle{\frac{L}{2}}}$,}\\
&\mbox{and satisfies}\quad  A_{L,\bar y_0,\bar y_1}^-=_{\text{law}} A^-_L.
\end{align*}
Adding and subtracting $\mathbb{E} A_L^-$ and applying
$\|\sup_{i=1}^{16}X_i\|_1\lesssim\sup_{i=1}^{16}\|X_i\|_1$
to $X_i:= A_{L,\bar y_0,\bar y_1}^--\mathbb{E} A_L^-$, we deduce \eqref{m24} in the rigorous form of
\begin{align}\label{m32}
\mathbb{E}\tilde
A_L^--\mathbb{E} A_L^-\gtrsim-\| A_L^--\mathbb{E} A_L^-\|_1.
\end{align}
Taking the expectation of \eqref{m23} and inserting \eqref{m42} and \eqref{m32} yields:

\begin{lemma}\label{L:pm}
\begin{align}\label{ao41}
\mathbb{E}A_{L/2}^+-2\mathbb{E}A^-_L+\mathbb{E}A_{L/4}^{-}\lesssim\|A^-_{L/4}-\mathbb{E}A^-_{L/4}\|_1+1.
\end{align}
\end{lemma}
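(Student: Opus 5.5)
The plan is to make the concatenation argument sketched above rigorous and then take expectations.

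\emph{Construction.} Fix boundary data $(y_0,y_1)$ with $|y_0|,|y_1|\le L/2$.

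Let $h^{+}$ be the maximizer in the definition \eqref{m38} of $A^+$ on the middle interval $[L/4,3L/4]$, of length $L/2$. Its random boundary values $y_0^*,y_1^*$ satisfy $|y_0^*|,|y_1^*|\le L/4$. The law of this variable is that of $A^+_{L/2}$, by translation invariance of $\xi$ in $x$.

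On $[0,L/4]$ take a maximizer of $W-D$ with data $(y_0,y_0^*)$, and on $[3L/4,L]$ one with data $(y_1^*,y_1)$. The constraints fit \eqref{m41} with $l=L/4$, since $|y_0|\le L/2=2l$ and $|y_0^*|\le L/4\le 2l$. Concatenating the three pieces gives a continuous configuration on $[0,L]$, piecewise linear on unit intervals because $L/4\in\mathbb{N}$, with data $(y_0,y_1)$. It is therefore admissible in the inner maximum of \eqref{m39}.

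\emph{Action bookkeeping.} By additivity of $W$ and $D$ over $x$-intervals, the action of the concatenation divided by $L$ equals
\begin{align*}
\tfrac12\big[(W-D)(h^+)-(W-D)(a_{h^+})\big]/(L/2)
+\tfrac14\big[\cdots\big]_{[0,L/4]}+\tfrac14\big[\cdots\big]_{[3L/4,L]}
\end{align*}
plus the linear contributions $(W-D)(a)/L$ of the three interpolants on the subintervals. We then subtract $(W-D)(a_{y_0,y_1})/L$ on $[0,L]$.

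Each bracket on an outer piece is at least $\tilde A^-_{L/4,n}$. The reason is that the minimum over boundary data in \eqref{m41} is taken over a set containing $(y_0,y_0^*)$, whatever the random $y_0^*$ is. This uniformity in the boundary data is exactly why $\tilde A^-$ (a min over data) is used, and it avoids any statistical dependence issue.

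The four linear terms are collected in $X$. Each is bounded by a variable of the type in \eqref{m40} (on intervals of length $L/4$, $L/2$, $L$, with data ranges at most $2\times$ the interval length). These are handled by \eqref{m40} after rescaling, or by a trivial enlargement of the data range, which only costs a constant in the same chaining argument. This gives $\|X\|_2\lesssim1$, hence $\mathbb{E}|X|\lesssim1$.

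Taking the minimum over $(y_0,y_1)$ yields \eqref{m23}. Its right-hand side does not depend on $(y_0,y_1)$ except through $X$, and $X$ is controlled by a supremum over all data.

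\emph{Expectations.} Taking expectations of \eqref{m23} gives
\[
\mathbb{E}A^-_L\ge\tfrac12\mathbb{E}A^+_{L/2}+\tfrac12\mathbb{E}\tilde A^-_{L/4}-C.
\]
Next we apply \eqref{m32} at scale $L/4$. This is proved by writing $\tilde A^-$ as the minimum of 16 shifted copies equal in law to $A^-_{L/4}$, using the shear/translation invariance \eqref{m33} in $y$. Then $\min_i X_i\ge -\max_i|X_i|$, and $\mathbb{E}\max_{i\le16}|X_i|\lesssim\max_i\|X_i\|_1$ by the Orlicz-norm maximal inequality of Lemma~\ref{L:11}. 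This gives
\[
\mathbb{E}\tilde A^-_{L/4}\ge\mathbb{E}A^-_{L/4}-C\|A^-_{L/4}-\mathbb{E}A^-_{L/4}\|_1.
\]
Multiplying by $2$ and rearranging gives \eqref{ao41}.

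\emph{Main obstacle.} The delicate point is the measurability/selection step: the outer maximizers depend on the random $y_0^*$. This is harmless because we only use the deterministic inequality "bracket $\ge$ min over all admissible data", with no claim about the law at random data. The remaining care is checking that the data ranges in the $X$-terms are covered by \eqref{m40} up to constants.
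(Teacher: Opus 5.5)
Your proposal is correct and follows the paper's own argument. You build the competitor by concatenating the $A^+_{L/2}$-maximizer on $[L/4,3L/4]$ with outer maximizers, which gives \eqref{m23} with $\tilde A^-_{L/4,n}=_{\rm law}\tilde A^-_{L/4}$. You absorb the linear parts via \eqref{m40}, and you close with \eqref{m32} at scale $L/4$ through the 16 sheared copies. For the 16-copy step you don't even need Lemma~\ref{L:11}, since $\mathbb{E}\max_{i\le 16}|X_i|\le\sum_i\mathbb{E}|X_i|\le 16\max_i\|X_i\|_1$.
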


This is indeed the desired control of $A^+$ by $A^-$: Momentarily assuming
an $O(1)$ control of the r.~h.~s.~and
the existence of $\lim_{L\uparrow\infty}\mathbb{E}A^-_L/\ln L$, Lemma~\ref{L:pm} would
imply $\limsup_{L\uparrow\infty}\mathbb{E}A_L^+/\ln
L\le\lim_{L\uparrow\infty}\mathbb{E}A_L^-/\ln L$.

\ignore{
\subsection{Tackling statistical dependencies: bin and count}\label{SS:dep}
[Not needed anymore]
The reason why despite the robust concatenation argument, (\ref{ao39}) only holds approximately
\begin{align}\label{ao34}
A_L\ge_{\rm law}A_{L/l}+A_l+O(1)
\end{align}
-- even when applying $\mathbb{E}$ -- is that the realizations $W_n$ of $W$ in (\ref{ao32}) 
depend on $h_{\ge l}$. One tackles this by binning and counting. More precisely,
we will construct a finite subset ${\mathcal H}_{\ge l}$ of the coarse-grained configuration space
\begin{align*}
{\mathcal H}_{\ge l}\subset\big\{\,h\colon[0,L]\rightarrow\mathbb{R}\;\;
\mbox{is piecewise linear on intervals of size $l$}\,\big\}.
\end{align*}
Turning to (\ref{ao32}) we substitute $h_{\ge l}$ by
$\bar h\in{\mathcal H}_{\ge l}$, 
optimizing over the $h_n$ yields by definition (\ref{ao15})
\begin{align}\label{ao35}
A_L
\ge\frac{(W-D)(\bar h)}{L}+\frac{1}{N}\sum_{n=1}^NA_{l,n},
\end{align}
where for fixed deterministic $\bar h$, as a consequence of the zero range of dependence
and the stationarity of white noise, the $A_{l,n}$'s are independent
and identically distributed according to $A_l$.

\medskip

On the one hand, we need ${\mathcal H}_{\ge l}$ to be so large that
it is fairly dense in the coarse-grained configuration space, w.~r.~t.~the topology of the 
maximal action, so that we obtain for the first r.~h.~s.~term in (\ref{ao35})
\begin{align}\label{ao37}
\max_{\bar h\in{\mathcal H}_{\ge l},\bar h(0)=0,\bar h(L)=0}
\frac{(W-D)(\bar h)}{L}=_{\rm law} A_{L/l}+O(1).
\end{align}
On the other hand, we need ${\mathcal H}_{\ge l}$ to be so small that its
cardinality is controlled as follows
\begin{align}\label{ao36}
\ln\#{\mathcal H}_{\le l}\lesssim \frac{L}{l}=N.
\end{align}
This precise control is crucial when treating the second r.~h.~s.~term in (\ref{ao35}), 
which we split into expectation and fluctuations
\begin{align*}
\frac{1}{N}\sum_{n=1}^NA_{l,n}=
\mathbb{E}A_l+\frac{1}{N}\sum_{n=1}^N(A_{l,n}-\mathbb{E}A_l).
\end{align*}
Indeed, appealing to the -- still to be established fact, see Lemma \ref{L:1}  -- that  
the $N$ independent centered variables $A_{l,n}-\mathbb{E}A_l$ have
exponential tails of $O(1)$, (\ref{ao36}) is just enough to obtain
\begin{align*}
\max_{\bar h\in{\mathcal H}_{\ge l}}\big|\frac{1}{N}\sum_{n=1}^N(A_{l,n}
-\mathbb{E}A_l)\big|=O(1).
\end{align*}
This is the main idea behind (\ref{ao34}).
}


\subsection{Balancing cardinality and concentration}\label{SS:count}

The argument for (\ref{ao33}) and (\ref{ao51}) is based
on splitting (\ref{ao35bis}) and (\ref{ao50}) into expectation and fluctuation
\begin{align}
\mathbb{E} A^+_L&\le
\mathbb{E} A^+_{L/l}+\mathbb{E} A^+_l
+\mathbb{E}(L/l)^{-1}\sum_{n=1}^{L/l}\big( A_{l,n}^+(\Pi(\bar h^*_{\ge
l}))-\mathbb{E} A^+_l\big),\label{m7}\\
\mathbb{E} A^-_L&\ge\mathbb{E} A^-_{L/l}+\mathbb{E} A^-_l
+\mathbb{E}(L/l)^{-1}\sum_{n=1}^{L/l}\big( A_{l,n}^-(\Pi(\bar
h^*))-\mathbb{E} A^-_l\big).\label{m8}
\end{align}
The first ingredient is the strong concentration of the sums over $n$, which in turn follows from a good control of the fluctuations of the random variables with sum over:
\begin{align}
\mbox{$A_{L}^\pm-\mathbb{E} A_L^\pm$ have exponential tails of $O(1)$}.
\label{ao51ter}
\end{align}
The latter is reminiscent of (\ref{ao26}) and will be addressed in Subsection \ref{SS:Peled}.

\medskip

The second ingredient for the argument for \eqref{ao33} and \eqref{ao51} is the construction of a finite
subset ${\mathcal N}_{\rm fin}$ of the net ${\mathcal N}$ 
with the following two competing properties:
On the one hand, it is large enough so that maximizers satisfy
\begin{align}
\Pi(h^*_{\ge l})\in{\mathcal N}_{\rm fin}&\quad\mbox{with overwhelming probability},\label{ao50bis}\\
\Pi(\bar h^*)\in{\mathcal N}_{\rm fin}&\quad\mbox{with overwhelming probability}\label{ao50ter}
\end{align}
On the other hand, its cardinality is controlled as follows
\begin{align}\label{ao36bis}
\ln\#{\mathcal N}_{\rm fin}\lesssim L/l.
\end{align}
Here comes the auxiliary stochastic result, which we use for $N=L/l$, 
that makes assumptions of the form
(\ref{ao50bis}) \& (\ref{ao50ter}) precise 
and that treats terms of the form of the second r.~h.~s.~terms in (\ref{ao35bis}) and (\ref{ao50}):

\begin{lemma}\label{L:stoch-bis}
Suppose that for every deterministic $\bar h\in{\mathcal N}$
we are given a family $\{A_{n}(\bar h)\}_{n=1,\cdots,N}$
of independent random variables with $A_{n}(\bar h)$ $=_{\rm law}A$. 
Suppose that for every threshold $\nu\in\mathbb{N},\nu\ge2$ there exists ${\mathcal N}_{\nu}$ $\subset{\mathcal N}$ with
\begin{align}\label{ao72}
\ln\#{\mathcal N}_{\nu}\le C_0 N\ln\nu,
\end{align} 
where $C_0$ is the universal constant from Lemma \ref{L:count}.
Then for every $\bar h\in\mathcal{N}_\nu$
\begin{align}
&N^{-1}\sum_{n=1}^{N} A_n(\bar h)\lesssim\|A\|_1(\ln\nu+N^{-1}\bar A)\nonumber\\
&\mbox{with}\quad\|\bar A\|_1\le1\quad\mbox{and}\quad\mbox{$\bar A$ is independent on $\nu$}.\label{mb10}
\end{align}
\end{lemma}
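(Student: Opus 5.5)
The plan is to reduce to a normalized situation, build the random variable $\bar A$ explicitly as a supremum over all thresholds $\nu$ and all $\bar h\in{\mathcal N}_\nu$, and control its tail by a union bound. The cardinality bound (\ref{ao72}) is exactly what makes the union bound summable.

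\emph{Normalization.} Both sides of (\ref{mb10}) are $1$-homogeneous under $A_n\mapsto \lambda A_n$, so we may assume $\|A\|_1=1$. If $\|A\|_1=0$ there is nothing to prove. With this normalization, definition (\ref{ao74}) gives $\mathbb{E}\exp(|A|)\le e$.

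\emph{Step 1: exponential moment for a fixed $\bar h$.} Fix a deterministic $\bar h\in{\mathcal N}$. By independence of $\{A_n(\bar h)\}_n$ and identical law,
\begin{align*}
\mathbb{E}\exp\Big(\sum_{n=1}^N|A_n(\bar h)|\Big)=\prod_{n=1}^N\mathbb{E}\exp(|A_n(\bar h)|)\le e^N .
\end{align*}
Chebyshev's inequality then yields, for every $s\ge0$,
\begin{align*}
\mathbb{P}\Big(\sum_{n=1}^N|A_n(\bar h)|\ge s\Big)\le e^{N-s}.
\end{align*}

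\emph{Step 2: definition of $\bar A$ and union bound.} Set $C:=C_0+3$ and define
\begin{align*}
\bar A':=\sup_{\nu\ge2}\;\sup_{\bar h\in{\mathcal N}_\nu}\Big(\sum_{n=1}^N|A_n(\bar h)|-CN\ln\nu\Big)_+ .
\end{align*}
This is a countable supremum, since ${\mathcal N}$ is countable, so $\bar A'$ is measurable. By construction it does not depend on $\nu$. For $t\ge0$, the union bound together with Step 1 and (\ref{ao72}) gives
\begin{align*}
\mathbb{P}(\bar A'\ge t)\le\sum_{\nu\ge2}\#{\mathcal N}_\nu\, e^{N-CN\ln\nu-t}
\le e^{-t}\sum_{\nu\ge2}e^N\nu^{-3N}\le 2e^{-t}\,(e/8)^N\le e^{-t}.
\end{align*}
Here we used $\sum_{\nu\ge2}\nu^{-3N}\le 2\cdot 2^{-3N}$ for $N\ge1$. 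Hence $\bar A'$ has exponential tail of order one, and Lemma~\ref{L:11} gives $\|\bar A'\|_1\le C'$ for a universal $C'$. Put $\bar A:=\bar A'/C'$, so that $\|\bar A\|_1\le1$.

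\emph{Step 3: conclusion.} For any $\nu\ge2$ and $\bar h\in{\mathcal N}_\nu$, the definition of $\bar A'$ gives
\begin{align*}
N^{-1}\sum_{n=1}^NA_n(\bar h)\le N^{-1}\sum_{n=1}^N|A_n(\bar h)|\le C\ln\nu+N^{-1}C'\bar A .
\end{align*}
Undoing the normalization multiplies the right-hand side by $\|A\|_1$, which is (\ref{mb10}).

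\emph{Main obstacle.} The only delicate point is Step 2. We must ensure that one single $\bar A$, independent of $\nu$, works simultaneously for all thresholds. This is why the sum over $\nu$ is performed inside the union bound, and why the penalty $CN\ln\nu$ must beat the entropy $C_0N\ln\nu$ by a margin $3N\ln\nu$. That margin makes the $\nu$-sum summable and absorbs the factor $e^N$ coming from Step 1.
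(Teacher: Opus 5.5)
Your proof is correct and follows the paper's argument: an exponential Chebyshev bound using independence for fixed $\bar h$, then a union bound over ${\mathcal N}_\nu$ using (\ref{ao72}), then uniformity in $\nu$. The only difference is organizational: the paper first proves a $\nu$-dependent bound and then removes the $\nu$-dependence with Lemma~\ref{L:12}, whereas you fold that step into a single union bound over pairs $(\nu,\bar h)$, where the extra margin $3N\ln\nu$ makes the sum over $\nu$ converge.
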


In particular if $\bar h^*\in\mathcal{N},\nu^*\in\mathbb{N}$ are random variables with $\bar h^*\in\mathcal{N}_{\nu^*}$, then
\begin{align*}
\big\|N^{-1}\sum_{n=1}^NA_n(\bar h^*)\big\|_1\lesssim\|A\|_1\|\ln\nu^*\|_1\lesssim\|A\|_1\ln\|\nu^*\|_1,
\end{align*}
where the inequality uses the crude estimate $\|\ln X\|_1\lesssim\ln\|X\|_1$, which can be checked by Lemma~\ref{L:11}.

For the sake of completeness, we display the easy proof of Lemma~\ref{L:stoch-bis}. It is sufficient to prove the weaker
\begin{align}\label{mb1}
\sup_{h\in\mathcal{N}_\nu}N^{-1}\sum_{n=1}^NA_n(\bar h)\lesssim\|A\|_1(\ln\nu+\bar A_\nu)\quad\mbox{with}\quad\|\bar A_\nu\|_1\le1,
\end{align}
where the random variable $\bar A_\nu$ still depends on $\nu$. This dependence can be a posteriori removed by applying Lemma~\ref{L:12}. 

\medskip

In view of Lemma~\ref{L:11}, it is enough to show tails bounds for the l.~h.~s.~. These follow by applying a union bound, Chebychev inequality and independence:
\begin{align*}
\mathbb{P}(\sup_{\bar h\in\mathcal{N}_\nu}N^{-1}\sum_{n=1}^N|A_n(\bar h)|\ge\mu)&\le\sum_{\bar h\in\mathcal{N}_\nu}\mathbb{P}(N^{-1}\sum_{n=1}^N|A_n(\bar h)|\ge\mu)\\&\le\#\mathcal{N}_\nu e^{-N\mu/\|A\|_1+N}\le e^{-N\mu/\|A\|_1+N+C_0N\ln\nu}.
\end{align*}
Applying the change of variables $\mu=\|A\|_1(1+C_0\ln\nu+\tilde\mu/N)$ and using the equivalence of stochastic norms from Lemma~\ref{L:11}, one obtains \eqref{mb1}.

\subsection{Geometry of the net \texorpdfstring{${\mathcal N}$}{N} and counting}\label{SS:bounds}
As discussed in Subsection \ref{SS:tackle}, we need to identify bounds defining the
${\mathcal N}_{\rm fin}\subset{\mathcal N}$ 
that are loose enough such that (\ref{ao50bis}) and (\ref{ao50ter}) hold,
and that are tight enough such that (\ref{ao36bis}) holds.
We first address the latter.
For this, it is helpful to think of the net ${\mathcal N}$ 
defined in (\ref{ao48}) as the integer lattice
in $L/l$-dimensional Euclidean space, which it is in terms of\footnote{modulo the value 
$h(0)/l\in \mathbb{Z}$ at the left endpoint} $\frac{dh}{dx}$, 
as the latter is constant on $((n-1)l,nl)$ for $n=1,\cdots,L/l$ with values in $\mathbb{Z}$.
In particular, the (quadratic part of the) inner product comes from the Dirichlet energy $D/l$.

\medskip

In passing from this integer lattice to its subset ${\mathcal N}$  
we will employ a family of bounds rather than a single bound
since the intersection of the lattice with a single Euclidean ball of radius $L/l$ would
miss the desired (\ref{ao36bis}) by a double logarithm. 
In designing a family of bounds we use the fact that our Euclidean space
naturally is the orthogonal sum of subspaces that correspond to a given dyadic $x$-scale;
this allows us to consider the intersection of the integer lattice
with a corresponding Cartesian product of balls. 
This natural decomposition arises from considering the projection\footnote{For scales 
$l,\rho$, subscripts $h_l,h_\rho$ denote the projection introduced here. For an integer
$n$, the subscript $h_n$ denotes the restricted functions in \eqref{ao57}.} $h_\rho$
of a configuration $h$ to a dyadic scale $\rho$ as given by
\begin{align*}
h_\rho:=h_{\ge\rho}-h_{\ge 2\rho}\quad\mbox{for dyadic}\quad 1\le\rho\le L,
\end{align*}
with the understanding that $h_L=h_{\ge L}$ is the linear interpolation of the boundary data;
this decomposition\footnote{which is reminiscent of a Littlewood-Paley decomposition; on the level of $\frac{dh}{dx}$ it
could be expressed in terms of the Haar basis of $L^2([0,L])$} is indeed orthogonal w.~r.~t.~$D$.

\medskip

Loosely speaking, we implement the idea of considering Cartesian products of balls
by imposing the bounds $D(\bar h_{\rho})\lesssim L$ on all intermediate dyadic
scales $l\le\rho\le L$.
We parameterize these bounds by $\nu\gg 1$ in terms of $D(\bar h_{\rho})/L\le\nu$. 
For the counting, the bounds on the larger-scale components, i.~e.~for $\rho\gg l$, 
matter less since their dimension is smaller;
and in view of the (implicit) maximum over $\rho$, 
it turns out to be necessary for (\ref{ao50bis}) to relax these by 
just imposing $D(\bar h_{\rho})/L\le(\rho/l)^2\nu$,
where the choice of the exponent $2$ is immaterial.

\begin{lemma}[{\cite[Lemma 8]{OPW}}]\label{L:count} 
For $\nu\ge e$ we consider
\begin{align}\label{ao61}
{\mathcal N}_\nu=\{\,\bar h\in{\mathcal N}\,|\,D(\bar h_{\rho})/L\le(\rho/l)^2\nu
\;\;\mbox{for}\;\;l\le\rho\le L,\quad|\bar h(0)|/L\le\nu\,\}.
\end{align}
Then there exists a universal constant $C_0$ such that
\begin{align*}
\ln \# {\mathcal N}_\nu\le C_0(L/l)\ln\nu.
\end{align*}
\end{lemma}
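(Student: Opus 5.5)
We prove Lemma~\ref{L:count} by counting lattice points scale by scale. Every $\bar h\in{\mathcal N}$ is determined by the integer $\bar h(0)/l$ together with the integer slopes $\frac{d\bar h}{dx}$ on the $L/l$ intervals $((n-1)l,nl)$. Equivalently, using the orthogonal decomposition $\bar h=\sum_{l\le\rho\le L}\bar h_\rho$ over dyadic $\rho$, $\bar h$ is determined by $\bar h(0)$ together with the family of components $\{\bar h_\rho\}_\rho$.

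The first step is to bound the counting of $\bar h$ by a product over scales. The constraint $|\bar h(0)|/L\le\nu$ together with $\bar h(0)\in l\mathbb{Z}$ leaves at most $2\nu L/l+1$ choices. Its logarithm is $\lesssim\ln\nu+\ln(L/l)\lesssim (L/l)\ln\nu$, since $\ln(L/l)\le L/l$ and $\nu\ge e$.

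The components $\bar h_\rho$ do not individually take values in a lattice. We therefore proceed from coarse to fine. Given $\bar h_{\ge 2\rho}$ (already counted), $\bar h_{\ge\rho}$ is determined by its values at the midpoints $(2m-1)\rho$, $m=1,\dots,L/(2\rho)$, which lie in $l\mathbb{Z}$. Equivalently, it is determined by the midpoint deviations $\delta_m:=\bar h_\rho((2m-1)\rho)$. These lie in a shifted copy of $l\mathbb{Z}$ fixed by $\bar h_{\ge2\rho}$. Since $\bar h_\rho$ is a sum of tent functions of height $\delta_m$ and half-width $\rho$, we have
\begin{align*}
D(\bar h_\rho)=\sum_m\delta_m^2/\rho .
\end{align*}
The constraint $D(\bar h_\rho)/L\le(\rho/l)^2\nu$ thus becomes $\sum_m(\delta_m/l)^2\le \nu L\rho^3/l^4$. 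This is a Euclidean ball of radius $R_\rho:=(\nu L\rho^3/l^4)^{1/2}$ in dimension $k_\rho:=L/(2\rho)$, intersected with a shifted integer lattice.

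Next we use the standard lattice-point bound: a ball of radius $R$ in $\mathbb{R}^k$ contains at most $(C(1+R/\sqrt k))^k$ points of any translate of $\mathbb{Z}^k$. This follows by comparing with the volume of the ball of radius $R+\sqrt k$ and Stirling's formula. Here
\begin{align*}
R_\rho/\sqrt{k_\rho}=\big(2\nu\rho^4/l^4\big)^{1/2},
\end{align*}
so scale $\rho$ contributes
\begin{align*}
\ln\#\lesssim \frac{L}{\rho}\Big(1+\ln\nu+\ln\frac{\rho}{l}\Big).
\end{align*}
Summing over dyadic $\rho=2^jl$ from $l$ to $L$ gives
\begin{align*}
\sum_j \frac{L}{2^jl}\big(1+\ln\nu+j\ln2\big)\lesssim \frac{L}{l}(1+\ln\nu),
\end{align*}
because $\sum_j 2^{-j}j<\infty$. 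Together with $\ln\nu\ge1$, this yields $\ln\#{\mathcal N}_\nu\le C_0(L/l)\ln\nu$.

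The main obstacle is this last summation. We must ensure that the relaxed bounds $(\rho/l)^2\nu$ on coarse scales only produce a factor logarithmic in $\rho/l$ per dimension, and that the dimension $L/\rho$ decays geometrically so that the series converges. A secondary point is the boundary scale $\rho=L$: there $h_L$ is the linear interpolation and is fixed by $\bar h(0)$ and $\bar h(L)$. This is handled by including $\bar h(L)$ as one extra lattice coordinate, bounded via $D(\bar h_L)/L\le(L/l)^2\nu$, which costs $O(\ln\nu+\ln(L/l))$ and is absorbed as before.
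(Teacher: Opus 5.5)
Your argument is correct and follows the approach the paper sketches in Subsection~\ref{SS:bounds}; the paper itself defers the proof to \cite[Lemma 8]{OPW}. In both, you split $\bar h$ into the $D$-orthogonal dyadic components $\bar h_\rho$, count lattice points in the resulting product of balls, and let the geometric decay of the dimension $L/\rho$ absorb the $\ln(\rho/l)$ loss from the relaxed bounds $(\rho/l)^2\nu$. Your coarse-to-fine conditioning, in which the midpoint deviations at scale $\rho$ range over a translate of $l\mathbb{Z}^{L/(2\rho)}$ fixed by $\bar h_{\ge 2\rho}$, correctly supplies a detail the sketch glosses over: the lattice ${\mathcal N}$ does not literally split as a product over scales.
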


We now turn to (\ref{ao50bis}) and (\ref{ao50ter}). In view of the choice (\ref{ao61}),
(\ref{ao50bis}) will be satisfied provided the maximizer $h^*_{y_0,y_1}$ of $(W-D)/L$
with boundary conditions $y_0,y_1$ satisfies (as always with overwhelming probability)
\begin{align}\label{ao64}
\sup_{|y_0|,|y_1|\le L/2}\,D(\Pi(h^*_{y_0,y_1,\rho}))/L=O(1)\quad\mbox{for dyadic}\;l\le \rho\le L.
\end{align}
We first note that the projection $\Pi$ in (\ref{ao64}) is irrelevant:
Since $\bar h_\rho-\Pi(\bar h)_\rho$ is linear in $[(n-1)\rho,n\rho]$ with
boundary conditions in $[-l/2,l/2]$, one has
\begin{align*}
D_{[(n-1)\rho,n\rho]}(\bar h_\rho-\Pi(\bar h)_\rho)\le l^2/(2\rho).
\end{align*}
Together with the triangle inequality for $h\mapsto\sqrt{D(h)/L}$, we have for any (coarse-grained) configuration $\bar h$
%
$\sqrt{D(\Pi(\bar h))/L}$ $\le\sqrt{D(\bar h)/L}$ $+l/(\sqrt{2}\rho)$,
%
hence the control (\ref{ao64}) follows from
\begin{align}\label{ao65}
\sup_{|y_0|,|y_1|\le L/2}\,D(h^*_{y_0,y_1,\rho})/L=O(1)\quad\mbox{for dyadic}\;l\le \rho\le L.
\end{align}

More precisely, 

\begin{remark}\label{rmk1-bis}
if $\mathcal{N}_\nu$ is defined in \eqref{ao61}, then for every $|y_0|,|y_1|\le L/2$,
\begin{align}
\bar h^*_{y_0,y_1}\in\mathcal{N}_{\nu^*}\quad&\mbox{with}\quad \nu^*\lesssim1+\max_{l\le\rho<L}(l/\rho)^2\max_{|y_0|,|y_1|\le L/2}D(h^*_{y_0,y_1,\rho})/L,\nonumber\\
&\mbox{so that}\quad\|\nu^*\|_1\lesssim1+\max_{l\le\rho<L}\|\max_{|y_0|,|y_1|\le L/2}D(h^*_{y_0,y_1,\rho})/L\|_1.\label{m6-bis}
\end{align}
where in the last step with estimated the maximum over $\rho$ by the sum and used the geometric decay of the weight $(l/\rho)^2$.
\end{remark}

\medskip

We are finally able to make the heuristic outlined in Subsection~\ref{SS:count}
rigorous. Starting from \eqref{m7} and \eqref{m8}, we combine Lemmas~\ref{L:stoch-bis} and
\ref{L:count} together with \eqref{m6-bis} to get the following precise version of
the sub- and super-additivity inequality.

\begin{lemma}\label{L:subsuper}
\begin{align*}
\mathbb{E}A^+_L-\mathbb{E}A^+_{L/l}-\mathbb{E}A^+_l\lesssim&(\|A^+_l-\mathbb{E}A^+_l\|_1+1)
\\&\cdot\ln(e+\max_\rho\|\max_{|y_0|,|y_1|\le L/2}D(h^*_{y_0,y_1,\rho})/L\|_1),\\
\mathbb{E}A^-_L-\mathbb{E}A^-_{L/l}-\mathbb{E}A^-_l\gtrsim&-(\|A^-_l-\mathbb{E}A^-_l\|_1+1)
\\&\cdot\ln(e+\max_\rho\|\max_{|y_0|,|y_1|\le L/2}D(h^*_{y_0,y_1,\rho})/L\|_1).
\end{align*}
\end{lemma}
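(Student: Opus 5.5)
The plan is to treat the $A^+$ bound in detail; the $A^-$ bound is the mirror image. Take expectations in Lemma~\ref{L:3}; this gives \eqref{m7}. The identity $\mathbb{E}A^+_{L/l,0}=\mathbb{E}A^+_{L/l}$ handles the coarse term. It remains to bound the expectation of the fluctuation term
\begin{align*}
F:=(L/l)^{-1}\sum_{n=1}^{L/l}\big(A^+_{l,n}(\Pi(h^*_{\ge l}))-\mathbb{E}A^+_l\big).
\end{align*}
Since $F$ is evaluated at the random net point $\Pi(h^*_{\ge l})$, I will invoke Lemma~\ref{L:stoch-bis} with $N=L/l$ and centered variables $A_n(\bar h):=A^+_{l,n}(\bar h)-\mathbb{E}A^+_l$.

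\textbf{Checking the hypotheses of Lemma~\ref{L:stoch-bis}.} For deterministic $\bar h\in\mathcal N$, the $A^+_{l,n}(\bar h)$ are functionals of white noise restricted to the disjoint strips $[(n-1)l,nl]\times\mathbb{R}$. Hence they are independent. By the shear invariance \eqref{m33} and stationarity, each has the law of $A^+_l$. The cardinality hypothesis \eqref{ao72} is exactly Lemma~\ref{L:count} with the family $\mathcal N_\nu$ from \eqref{ao61}. The lemma then yields, for all $\bar h\in\mathcal N_\nu$,
\begin{align*}
N^{-1}\sum_n A_n(\bar h)\lesssim \|A^+_l-\mathbb{E}A^+_l\|_1(\ln\nu+N^{-1}\bar A),
\end{align*}
where $\bar A$ does not depend on $\nu$ and $\|\bar A\|_1\le 1$.

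\textbf{Plugging in the random net point.} I will use the random threshold $\nu^*$ from Remark~\ref{rmk1-bis}; then $\Pi(h^*_{\ge l})\in\mathcal N_{\nu^*}$. To justify this, note that $h^*$ is the maximizer for some boundary data $(y_0,y_1)$ with $|y_0|,|y_1|\le L/2$. The Remark's supremum over boundary data covers this data. The condition $|\bar h(0)|/L\le\nu$ is harmless because $|y_0|\le L/2$. The Remark is stated for $\bar h^*$, but its projection discussion transfers verbatim to $\Pi(h^*_{\ge l})$, since the dyadic components of $\Pi(h^*_{\ge l})$ at scales $\rho\ge l$ are controlled by those of $h^*$ via the triangle inequality preceding \eqref{ao65}. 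Taking expectations and using $|\mathbb{E}X|\le\|X\|_1$, $\|\ln\nu^*\|_1\lesssim\ln\|\nu^*\|_1$ and \eqref{m6-bis}, I get
\begin{align*}
\mathbb{E}F\lesssim \|A^+_l-\mathbb{E}A^+_l\|_1\Big(\ln\big(e+\max_\rho\|\max_{|y_0|,|y_1|\le L/2}D(h^*_{y_0,y_1,\rho})/L\|_1\big)+1\Big).
\end{align*}
This has the claimed form once the harmless $+1$'s are absorbed into the prefactor $(\|\cdot\|_1+1)$ and into the $\ln(e+\cdot)\ge 1$.

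\textbf{The $A^-$ bound and the main obstacle.} For $A^-$ I run the same argument from Lemma~\ref{L:3-} and \eqref{m8}. I apply Lemma~\ref{L:stoch-bis} to $-(A^-_{l,n}(\bar h)-\mathbb{E}A^-_l)$, which gives a lower bound. Here the random net point is $\Pi(\bar h^*)$, covered directly by Remark~\ref{rmk1-bis}. For this I must note that $\bar h^*$, the coarse maximizer, obeys the same dyadic Dirichlet bounds; the Remark asserts this with the same right-hand side. The step needing the most care is exactly this membership of the random projected maximizer in $\mathcal N_{\nu^*}$ with a controllable $\nu^*$. The bound has to be uniform over boundary data because $h^*$ realizes a max/min over $(y_0,y_1)$. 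Everything else is bookkeeping of Orlicz norms.
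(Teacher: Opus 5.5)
Your proposal is the paper's proof, spelled out in full. You take expectations in Lemmas~\ref{L:3} and~\ref{L:3-} to get \eqref{m7}--\eqref{m8}, then control the fluctuation sums at the random net point using Lemma~\ref{L:stoch-bis}, the cardinality bound of Lemma~\ref{L:count}, the random threshold $\nu^*$ of Remark~\ref{rmk1-bis} taken uniformly in the boundary data, and $\|\ln\nu^*\|_1\lesssim\ln\|\nu^*\|_1$. The one point you, like the paper, pass over quickly is the $A^-$ case: the coarse maximizer $\bar h^*$ is not the coarse-graining of a fine maximizer, so after the isotropic rescaling by $l$ its dyadic energies are controlled by the analogous quantity for the system of size $L/l$, not literally by the right-hand side at size $L$. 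This is harmless, since the lemma is only used in the buckling step, where the maximum over system sizes is taken anyway.
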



\subsection{Equipartition of the Dirichlet energy over logarithmic scales.}

We now argue how to control the scale-by-scale Dirichlet energy of the
maximizers
\begin{align*}
\max_\rho\|\sup_{|y_0|,|y_1|\le L/2}D(h_{y_0,y_1,\rho}^*)/L\|_1
\end{align*}
in terms of the Orlicz norms of the mid-point deviations
\begin{align}\label{ao67}
H_L:=\sup_{|y_0|,|y_1|\le
L/2}\,|h^*_{y_0,y_1}(L/2)-{\textstyle\frac{1}{2}}(y_0+y_1)|/L.
\end{align}
%
This quantity will be estimated in Subsection \ref{SS:Peled}, cf.~Corollary \ref{C:1}.
We start from the representation of the l.~h.~s.~of (\ref{ao65}), 
which for convenience we express in terms of $l=2\rho$:
\begin{align}
D(h_{l/2})/L=(L/l)^{-1}\sum_{n=1}^{L/l}(h_n(l/2)/l)^2\nonumber\\
\mbox{where}\quad h_n=(h-h_{\ge l})(\cdot+(n-1)l).\label{mb9}
\end{align}

\medskip

We'd like to apply this representation to the maximizer $h=h^*_{y_0,y_1}$ of $(W-D)/L$ with
inhomogeneous Dirichlet boundary conditions $y_0,y_1$. Note that, because of the local form of the action,
\begin{align}
&\mbox{the restriction $h_{y_0,y_1}^*|_I$ to some interval $I$ is a maximizer}\nonumber\\&\mbox{with respect to its own boundary conditions.}\label{m45}
\end{align}
Thus, conditioned on $\Pi(h^*_{y_0,y_1,\ge l})$ $=\bar h$ for some deterministic $\bar h\in{\mathcal N}$,
the configuration
$\tilde h_n^*$ $=(h^*_{y_0,y_1}-\Pi(h^*_{y_0,y_1,\ge l}))(\cdot+(n-1)l)$ maximizes $W_n-D$,
under some inhomogeneous Dirichlet boundary conditions\footnote{namely
$\Pi(h^*_{y_0,y_1,\ge l})-h^*_{y_0,y_1,\ge l}$} constrained to the $y$-interval $[-l/2,l/2]$,
where the functional $W_n(\tilde h)$ $=W_{[(n-1)l,nl]}(\bar h+\tilde h(\cdot-(n-1)l))$
$-W_{[(n-1)l,nl]}(\bar h)$ has the same law as $W_{[0,l]}$.
Hence 
%
%
we obtain

\begin{lemma}\label{L:dir-bis} For every deterministic $\bar h\in{\mathcal N}$ there exists a family
$\{H_{l,n}(\bar h)\}_n$ of independent random variables that are identically distributed
according to $H_l$ such that 
if $h^*_{y_0,y_1}$ denotes the maximizer of $(W-D)/L$ with inhomogeneous
Dirichlet boundary conditions $y_0,y_1$ we have
\begin{align*}
\max_{|y_0|,|y_1|\le L/2}D(h^*_{y_0,y_1,l/2})/L
\le(L/l)^{-1}\sum_{n=1}^{L/l}H_{l,n}^2(\Pi(h^*_{y_0,y_1,\ge l})).
\end{align*}
%
%
%
\end{lemma}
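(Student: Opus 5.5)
We define the random variables explicitly, check that they have the right law and independence, and then verify the inequality pointwise. Fix a deterministic $\bar h\in{\mathcal N}$ and $n\in\{1,\dots,L/l\}$. Set
\begin{align*}
W_n^{\bar h}(\tilde h):=W_{[(n-1)l,nl]}\big(\bar h+\tilde h(\cdot-(n-1)l)\big)-W_{[(n-1)l,nl]}(\bar h),
\end{align*}
and let $H_{l,n}(\bar h)$ be the random variable defined as in (\ref{ao67}) with $L$ replaced by $l$ and $W$ replaced by $W_n^{\bar h}$. Thus $H_{l,n}(\bar h)$ is the supremum over boundary data $|\tilde y_0|,|\tilde y_1|\le l/2$ of $|\tilde h^*(l/2)-\frac12(\tilde y_0+\tilde y_1)|/l$, where $\tilde h^*$ maximizes $W_n^{\bar h}-D$ on $[0,l]$ with these data.

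By the translation and shear invariance (\ref{m33}) applied to the deterministic $\phi=\bar h$, the functional $W^{\bar h}_n$ has the law of $W_{[0,l]}$, so $H_{l,n}(\bar h)=_{\rm law}H_l$. The random variable $H_{l,n}(\bar h)$ only depends on $\xi$ restricted to the strip $[(n-1)l,nl]\times\mathbb{R}$. These strips are disjoint, and white noise is independent on disjoint sets, so the family $\{H_{l,n}(\bar h)\}_n$ is independent.

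For the deterministic inequality, fix $y_0,y_1$ and write $h=h^*_{y_0,y_1}$ and $\bar h=\Pi(h_{\ge l})$. We use the representation (\ref{mb9}) of $D(h_{l/2})/L$ through $h_n=(h-h_{\ge l})(\cdot+(n-1)l)$. Introduce $\tilde h_n:=(h-\bar h)(\cdot+(n-1)l)$. Since $h_{\ge l}$ and $\bar h$ are both linear on $[(n-1)l,nl]$, we have $h_n=\tilde h_n-a_{\tilde h_n}$. Here $a_{\tilde h_n}$ is the linear interpolant of the boundary values $\tilde y_0=(h_{\ge l}-\bar h)((n-1)l)$ and $\tilde y_1=(h_{\ge l}-\bar h)(nl)$, which lie in $[-l/2,l/2]$ by (\ref{ao62}). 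Hence
\begin{align*}
h_n(l/2)=\tilde h_n(l/2)-\tfrac12(\tilde y_0+\tilde y_1).
\end{align*}

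By the locality (\ref{m45}), $h|_{[(n-1)l,nl]}$ maximizes $W-D$ among configurations with its own boundary values. Subtracting the linear $\bar h$ on this interval changes $W$ by the constant $W_{[(n-1)l,nl]}(\bar h)$. It changes $D$ by a term that is constant once the boundary data are fixed, since the cross term $\int\frac{d\bar h}{dx}\frac{d\tilde h}{dx}$ depends only on the boundary values. So $\tilde h_n$ maximizes $W_n^{\bar h}-D$ with data $(\tilde y_0,\tilde y_1)$, and therefore $|h_n(l/2)|/l\le H_{l,n}(\bar h)$.

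Summing the squares over $n$ via (\ref{mb9}) gives $D(h^*_{y_0,y_1,l/2})/L\le (L/l)^{-1}\sum_n H^2_{l,n}(\Pi(h^*_{y_0,y_1,\ge l}))$. Taking the maximum over $|y_0|,|y_1|\le L/2$ yields the lemma, with the same $\Pi(h^*_{y_0,y_1,\ge l})$ on the right-hand side as in the statement.

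The main subtlety is that $\bar h$ is evaluated at the random $\Pi(h^*_{\ge l})$. This is harmless because the inequality is pointwise: the variables $H_{l,n}(\bar h)$ are defined for every deterministic $\bar h$ simultaneously and then evaluated at the random net point. This is exactly the form needed later in Lemma~\ref{L:stoch-bis}. A minor point to check is uniqueness or measurability of maximizers; where this fails, we define $H$ with a supremum over all maximizers.
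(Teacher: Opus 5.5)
Your proof is correct and follows the paper's own argument, made explicit. Both use locality \eqref{m45} of the maximizer and subtract the net point $\Pi(h^*_{y_0,y_1,\ge l})$, so that on each interval you face a maximizer of a functional with the law of $W_{[0,l]}$ (by \eqref{m33}) and boundary data in $[-l/2,l/2]$, and then apply the representation \eqref{mb9}. Your pointwise evaluation of the deterministically indexed family $\{H_{l,n}(\bar h)\}$ at the random net point is a cleaner rendering of the paper's informal ``conditioning''.

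One harmless caveat is inherited from \eqref{mb9}. A tent of height $m$ on an interval of length $l$ has Dirichlet energy $2m^2/l$, so that identity, and hence the lemma's inequality, actually carries a factor $2$ on the right-hand side. This is immaterial for how the lemma is used later.
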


Hence by the informal argument from Subsection \ref{SS:tackle},
we may inductively in $\log_2 L/l$ derive (\ref{ao64}) via its equivalent form (\ref{ao65})
from the fact that (\ref{ao67}) with $L$ replaced by $l$ is O(1). More precisely, we
can apply to same reasoning used in the proof of Lemma~\ref{L:subsuper} to get
\begin{align}\label{m25}
\|\max_{|y_0|,|y_1|\le L/2}&D(h^*_{y_0,y_1,l/2})/L\|_1\nonumber\\&\lesssim\|H_l^2\|_1
\ln(e+\max_{l\le\rho<L}\|\max_{|y_0|,|y_1|\le L/2}D(h^*_{y_0,y_1,\rho})/L\|_1).
\end{align}
Taking the maximum in $l$, it turns into
\begin{align*}
\max_l\|\max_{|y_0|,|y_1|\le L/2}&D(h^*_{y_0,y_1,l})/L\|_1\nonumber\\&\lesssim\big(\max_l\|H_l^2\|_1\big)
\ln(e+\max_l\|\max_{|y_0|,|y_1|\le L/2}D(h^*_{y_0,y_1,l})/L\|_1)
\end{align*}
which allows to buckle\footnote{Note that the qualitative finitess of
$\|\max_{|y_0|,|y_1|\le L/2}D(h^*_{y_0,y_1,l})\|_1$ can be deduced from that of
$\{\|H_l^2\|_1\}_l$ by inductively applying \eqref{m25}.\label{fn1}}

\begin{lemma}\label{L:dir}
\begin{align*}
\max_l\|\max_{|y_0|,|y_1|\le L/2}D(h^*_{y_0,y_1,l})/L\|_1\lesssim
\max_{l}\|H_l^2\|_1\ln(e+\max_l\|H_l^2\|_1).
\end{align*}
\end{lemma}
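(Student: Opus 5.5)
The plan is to buckle the inequality displayed just before the statement. Set
\begin{align*}
M:=\max_l\|\max_{|y_0|,|y_1|\le L/2}D(h^*_{y_0,y_1,l})/L\|_1,\qquad K:=\max_l\|H_l^2\|_1 .
\end{align*}
We first establish, for each dyadic $l$, the estimate \eqref{m25}. Starting from Lemma~\ref{L:dir-bis}, the right-hand side has the structure treated in Lemma~\ref{L:stoch-bis}: for deterministic $\bar h\in\mathcal N$ the summands $H^2_{l,n}(\bar h)$ are independent and distributed like $H_l^2$, and we take $N=L/l$. By Lemma~\ref{L:count}, the sets $\mathcal N_\nu$ in \eqref{ao61} satisfy the cardinality assumption \eqref{ao72}.

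Next we place the random coarse-grained configuration $\Pi(h^*_{y_0,y_1,\ge l})$ in some $\mathcal N_{\nu^*}$, uniformly in $|y_0|,|y_1|\le L/2$, using Remark~\ref{rmk1-bis} and the triangle-inequality argument showing that $\Pi$ is harmless. This gives $\|\nu^*\|_1\lesssim 1+\max_{\rho}\|\max_{y_0,y_1}D(h^*_{y_0,y_1,\rho})/L\|_1\le 1+M$. The post-statement consequence of Lemma~\ref{L:stoch-bis}, combined with $\|\ln X\|_1\lesssim\ln\|X\|_1$, then yields
\begin{align*}
\|\max_{y_0,y_1}D(h^*_{y_0,y_1,l/2})/L\|_1\lesssim\|H_l^2\|_1\ln(e+M).
\end{align*}
One point needs care here. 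The supremum over $(y_0,y_1)$ inside the norm is handled because the bound of Lemma~\ref{L:stoch-bis} holds simultaneously for all $\bar h\in\mathcal N_\nu$: it is a supremum bound, \eqref{mb1}, with the $\nu$-dependence removed via Lemma~\ref{L:12}. So the random choice of $\bar h$ depending on $(y_0,y_1)$ is allowed.

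Taking the maximum over $l$ gives $M\le CK\ln(e+M)$. To conclude, we need $M<\infty$ a priori, which is the qualitative finiteness in footnote~\ref{fn1}. This follows by induction over the finitely many dyadic scales, starting from $\rho=L/2$, where $h_{\ge L}$ is the linear interpolation and no net is needed; alternatively, one uses the crude bound $D(h_\rho)\le D(h^*)$ together with integrability of the maximal Dirichlet energy. Given $M<\infty$, the elementary buckling is as follows. If $M\ge e$ and $M\le CK\ln(e+M)$, then $M\lesssim K\ln(e+K)$, since $x/\ln(e+x)$ is increasing and its inverse grows like $y\ln y$. If $M<e$, the claim is trivial, provided $K\gtrsim 1$ or the claim is read up to an additive constant.

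The main obstacle is to make rigorous the uniformity in the boundary data $(y_0,y_1)$ in Lemma~\ref{L:dir-bis}. This enters only through $\nu^*$ and the supremum over the net, so it should be manageable. The a priori finiteness needed to buckle is the other point that requires care.
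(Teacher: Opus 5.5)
Your proposal is correct and follows the paper's own route: you derive \eqref{m25} from Lemma~\ref{L:dir-bis} via Lemmas~\ref{L:stoch-bis}, \ref{L:count} and Remark~\ref{rmk1-bis}, with the supremum over the net absorbing the dependence on $(y_0,y_1)$; you then take the maximum over $l$ and buckle, using the qualitative finiteness of footnote~\ref{fn1}, which is obtained by induction downward from the top scale. The only superfluous point is your hedge in the case $M<e$: there the inequality itself gives $M\le CK\ln(e+M)\le CK\ln(2e)\lesssim K\le K\ln(e+K)$, so you need neither $K\gtrsim 1$ nor an additive constant.
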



\subsection{Dembin-Elboim-Hadas-Peled on the control of fluctuations}\label{SS:Peled}

We now turn to justification of the hypothesis of $O(1)$ exponential tails for the fluctuations
of  the maximal energies $A^{\pm}_L$ used in Subsection \ref{SS:tackle}, 
see (\ref{ao51ter}),
and the $O(1)$ exponential tails for the height $H_L$ at the midpoint used in Subsection
\ref{SS:bounds}, see (\ref{ao67}). 

\medskip

With respect to the Cameron-Martin space of our white noise $\xi$, 
which is $L^2(\mathbb{R}_{x}\times\mathbb{R}_y)$, and for a given configuration $h$,
the functional $\xi\mapsto (W-D)(h)/L$ is Lipschitz continuous with constant  
$\sqrt{L^{-2}\int_0^L dx|h(x)|}$.
Let $h^*_{y_0,y_1}$ denote the maximizer of $(W-D)/L$ constrained to Dirichlet boundary data
$y_0,y_1$; if we had a deterministic and uniform bound on its $L^1$ norm, in form of
\begin{align}\label{ao60}
\mbox{naively}\quad L^{-2}\int_0^L dx|h^*_{y_0,y_1}(x)|\le M
\quad\mbox{for all}\;|y_0|,|y_1|\le\frac{L}{2},
\end{align}  
we could constrain the maximum in $A_L^{\pm}$ to the corresponding set of configurations. Note also that (cf.~\eqref{m41})
\begin{align*}
L^{-2}\int_0^L dx|a_{y_0,y_1}(x)|\le\frac{1}{2}
\quad\mbox{for all}\;|y_0|,|y_1|\le\frac{L}{2}.
\end{align*}
Since taking maxima and minima over families of functions of $\xi$ preserves
the Lipschitz norm, $\xi\mapsto A_L^{\pm}$ would have Lipschitz constant $\le 
\sqrt{M+1/2}$. Hence by Gaussian concentration we would expect
\begin{align*}
\mbox{naively}\quad|A_L^{\pm}-\mathbb{E}A_L^{\pm}|/\sqrt{M+1/2}\quad\mbox{has Gaussian tails of $O(1)$}.
\end{align*}
This remains true if the bound (\ref{ao60}) just holds with overwhelming probability;
the tails relate as expected, and we state the intermediate result in the required Orlicz norm:

\begin{lemma}\label{L:1}
\begin{align*}
\|A^\pm_L-\mathbb{E}A^\pm_L\|_{3/2}^2
&\lesssim\big\|{\textstyle\sup_{|y_0|,|y_1|\le L/2}}{\textstyle L^{-2}\int_0^L dx}
|h^*_{y_0,y_1}(x)|\big\|_3+1.
\end{align*}
\end{lemma}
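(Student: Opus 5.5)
We combine Gaussian concentration for Lipschitz functionals of white noise with a truncation argument that handles the lack of a deterministic bound \eqref{ao60}. Set
\begin{align*}
M:=\sup_{|y_0|,|y_1|\le L/2}L^{-2}\int_0^L dx\,|h^*_{y_0,y_1}(x)|.
\end{align*}
For a threshold $m\ge 1$, define the truncated quantity $A^{\pm}_{L,m}$ as in \eqref{m38}, \eqref{m39}, with the inner maximum restricted to configurations $h$ satisfying $L^{-2}\int_0^L|h|\le m$. The first step is to check that each $\xi\mapsto(W-D)(h)/L-(W-D)(a_h)/L$ is Lipschitz in the Cameron--Martin space $L^2(\mathbb{R}^2)$. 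Its Cameron--Martin derivative is $L^{-1}$ times the difference of the signed indicators of the subgraphs of $h$ and $a_h$. The squared $L^2$ norm of this difference is at most $L^{-2}\int_0^L(|h|+|a_h|)\le m+1/2$. Sup and inf over families preserve the Lipschitz constant, so $A^\pm_{L,m}$ is $\sqrt{m+1/2}$-Lipschitz. Gaussian concentration (Borell--TIS) then gives
\begin{align*}
\mathbb{P}\big(|A^\pm_{L,m}-\mathbb{E}A^\pm_{L,m}|\ge t\big)\le 2\exp\big(-t^2/(2(m+1/2))\big).
\end{align*}
Measurability and finiteness of $\mathbb{E}A^\pm_{L,m}$ follow from the compactness of the finite-dimensional configuration space and continuity of $W$.

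The second step removes the truncation. On the event $\{M\le m\}$, every maximizer $h^*_{y_0,y_1}$ is admissible in the truncated problem, so $A^\pm_{L,m}=A^\pm_L$ there. We also need $\mathbb{E}A^\pm_{L,m}$ to be close to $\mathbb{E}A^\pm_L$. We plan to get this from a crude a priori integrability of $A^\pm_L$ and $A^\pm_{L,m}$: for instance, bound $\sup$ of $W$ over $\{L^{-2}\int|h|\le m\}$ intersected with a Dirichlet ball, using that $D$ dominates at large slopes. Combined with Cauchy--Schwarz this gives
\begin{align*}
|\mathbb{E}A^\pm_L-\mathbb{E}A^\pm_{L,m}|\lesssim \mathbb{P}(M>m)^{1/2}\times(\text{polynomial in } m),
\end{align*}
which is harmless once $m$ is large.

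The third step chooses $m$ depending on $t$ and converts the resulting tail bound into the Orlicz norm. For $t\ge 1$ we write
\begin{align*}
\mathbb{P}(|A^\pm_L-\mathbb{E}A^\pm_L|\ge t)\le\mathbb{P}(M>m)+\mathbb{P}\big(|A^\pm_{L,m}-\mathbb{E}A^\pm_{L,m}|\ge t/2\big).
\end{align*}
Put $K:=\|M\|_3+1$, so that $\mathbb{P}(M>m)\lesssim e^{-(m/K)^3}$ by Lemma \ref{L:11}. The Gaussian term is at most $e^{-t^2/(8m)}$. Balancing $(m/K)^3=t^2/m$ gives $m=t^{1/2}K^{3/4}$, and both terms become $\exp(-c\,t^{3/2}/K^{3/4})$. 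By Lemma \ref{L:11} this yields $\|A^\pm_L-\mathbb{E}A^\pm_L\|_{3/2}\lesssim K^{1/2}$, which is the claim after squaring. The balancing itself explains where the exponents $3$ and $3/2$ come from.

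The main obstacle is the second step: controlling $\mathbb{E}A^\pm_{L,m}-\mathbb{E}A^\pm_L$ without circularity, and accounting for the fact that the truncated $\min$--$\max$ in $A^-$ may differ from the untruncated one off the good event. I would first try to avoid the issue altogether. The idea is to replace the centring at the expectation by the median, since concentration about the median of $A_{L,m}^\pm$ holds directly and the median agrees with that of $A^\pm_L$ once $\mathbb{P}(M>m)<1/4$. One then passes back to the mean, using that $|\mathbb{E}X-\mathrm{med}X|\lesssim\|X-\mathrm{med}X\|_{3/2}$.
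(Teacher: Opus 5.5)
Your approach is the one the paper has in mind. The paper only records that each $\xi\mapsto(W-D)(h)/L-(W-D)(a_h)/L$ is Cameron--Martin Lipschitz with constant at most $\sqrt{L^{-2}\int_0^L|h|\,dx+1/2}$, that minima and maxima preserve this, and that Gaussian concentration survives when \eqref{ao60} holds only with overwhelming probability, ``the tails relating as expected''. Your truncation at level $m$ and the balancing $m=t^{1/2}K^{3/4}$ supply exactly these details, and the exponents come out right.

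\textbf{Step 2 as first planned.} It does not give a universal constant. A crude a priori bound on $A^\pm_L$ (such as the one behind \eqref{m20}, which is only qualitative) produces a factor $C_L$ that grows with $L$; even the true size of $A^+_L$ is of order $\ln L$. The requirement $C_L\,\mathbb{P}(M>m)^{1/2}\le t/2$ then holds only for $t\gtrsim K^{1/2}(\ln C_L)^{2/3}$, which puts an $L$-dependence into the Orlicz bound. So the median route is not optional; it is what makes the argument close.

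\textbf{The median route.} The medians of $A^\pm_L$ and $A^\pm_{L,m}$ need not agree. What is true, and suffices, is the following.
The truncated inner maximum never exceeds the untruncated one, and it coincides with it for every $(y_0,y_1)$ on $\{M\le m\}$. Hence $A^\pm_{L,m}\le A^\pm_L$ everywhere, with equality on $\{M\le m\}$; this also settles your worry about the min--max in $A^-$.
A median $\mu$ of $A^\pm_L$ therefore satisfies $\mathbb{P}(A^\pm_{L,m}\le\mu)\ge 1/2$ and $\mathbb{P}(A^\pm_{L,m}\ge\mu)\ge 1/2-\mathbb{P}(M>m)\ge 1/4$. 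Concentration of $A^\pm_{L,m}$ then forces $|\mu-\mathbb{E}A^\pm_{L,m}|\lesssim\sqrt{m}$.
For $m=t^{1/2}K^{3/4}$ and $t\gtrsim K^{1/2}$ you have both $\mathbb{P}(M>m)\le 1/4$ and $\sqrt{m}\ll t$. This gives $\mathbb{P}(|A^\pm_L-\mu|\ge t)\le\mathbb{P}(M>m)+\mathbb{P}(|A^\pm_{L,m}-\mathbb{E}A^\pm_{L,m}|\ge t/2)\lesssim\exp(-c\,t^{3/2}K^{-3/4})$.
Lemma~\ref{L:11} then yields $\|A^\pm_L-\mu\|_{3/2}\lesssim K^{1/2}$, and $\|A^\pm_L-\mathbb{E}A^\pm_L\|_{3/2}\le 2\|A^\pm_L-\mu\|_{3/2}$ finishes the proof.

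\textbf{Keeping the mean instead.} Telescope $\mathbb{E}A^\pm_L-\mathbb{E}A^\pm_{L,m}=\sum_{k\ge0}(\mathbb{E}A^\pm_{L,2^{k+1}m}-\mathbb{E}A^\pm_{L,2^km})$. Bound each term by $\lesssim\sqrt{2^km}\,\mathbb{P}(M>2^km)^{1/2}$, using Cauchy--Schwarz and only the concentration of the truncated quantities.

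\textbf{A minor point.} Borell--TIS concerns suprema of Gaussian processes. For the min--max $A^-_{L,m}$, cite the general concentration inequality for Cameron--Martin Lipschitz functionals, which also gives the integrability of $A^\pm_{L,m}$.
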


We note that maximizers $h^*_{y_0,y_1}$ 
of just $-D$ for given boundary conditions $y_0,y_1$ would be linear;
the argument, which is due to \cite[Theorem 1.4]{DEHP} and is also used in \cite[Lemma 3.6]{DEP}, is based on the first variation in direction of 
some infinitesimal (continuous and piecewise linear)
configuration $\dot h$ with vanishing boundary conditions
and involves the corresponding
bilinear form 
\begin{align*}
D(h^*_{y_0,y_1},\dot h):=\frac{1}{2}\int_0^Ldx\frac{dh^*_{y_0,y_1}}{dx}\frac{d\dot h}{dx}.
\end{align*}
In the presence of the non-differentiable functional $h\mapsto W(h)$, it is 
better to pass to finite variations like $h^*_{y_0,y_1}-\dot h$;
using the maximality in form of $(W$ $-D)(h^*_{y_0,y_1}-\dot h)$ $\le (W-D)(h^*_{y_0,y_1})$
we obtain by expanding the square
\begin{align*}
2D(h^*_{y_0,y_1};\dot h)\le W(h^*_{y_0,y_1})-W(h^*_{y_0,y_1}-\dot h)+D(-\dot h).
\end{align*}

\medskip

In order to leverage the fact that the functional $h\mapsto W_{\dot h}(h)$ 
$:=W(h-\dot h)-W(-\dot h)$
has the same law as $W$ (see~\eqref{m33}), we rewrite the latter as
\begin{align*}
2D(h^*_{y_0,y_1};\dot h)
\le(W-D)(h^*_{y_0,y_1})-(W_{\dot h}-D)(h^*_{y_0,y_1})-(W-D)(-\dot h).
\end{align*}
By definitions \eqref{m38} and \eqref{m39} this implies
\begin{align*}
&2\max_{|y_0|,|y_1|\le L/2}D(h^*_{y_0,y_1};\dot h)/L
\le A_L^+-A_{L,0}^-+X_0-(W-D)(-\dot h)/L\nonumber\\
&\mbox{where}\quad A_{L,0}^-=_{\rm law}A_L^-,\quad\mbox{and for some $X_0$ with}\;\|X_0\|_2\lesssim1,
\end{align*}
where $X_0$ accounts for the action of the linear parts and is controlled by~\eqref{m40}. We upgrade it to
\begin{align}
&\max_{|y_0|,|y_1|\le L/2}|D(h^*_{y_0,y_1};\dot h)|/L
\le A_L^+-{\textstyle\frac{1}{2}}A_{L,0}^--{\textstyle\frac{1}{2}}A_{L,1}^-+{\textstyle\frac{1}{2}}X_0+{\textstyle\frac{1}{2}}X_1\nonumber\\
&\qquad\qquad\qquad\qquad\qquad\quad-{\textstyle\frac{1}{2}}(W-D)(\dot h)/L-{\textstyle\frac{1}{2}}(W-D)(-\dot h)/L
\nonumber\\&\mbox{where}\;A_{L,1}^-=_{\rm law}A_L^-,\quad\mbox{and for some $X_1$ with}\;\|X_1\|_2\lesssim1.\label{m1}
\end{align}
It remains to take $\dot h$ $=t G(x,\cdot)$, where $x$ is a lattice point and
$G$ is the Green function for the Dirichlet Laplacian, which happens to be continuous
and piecewise linear and thus admissible, and is made such that 
$D(h^*_{y_0,y_1};\dot h)$ $=th^*_{y_0,y_1}(x)$. The result below is
derived by optimizing in $t$. This is not immediate because of the implicit
dependence of $A_{L,0}^-,A_{L,1}^-,X_0,X_1$ on $t$. A detailed proof is given in 
Section~\ref{S:det}.

\begin{lemma}\label{L:2} For any lattice point $x$
\begin{align*}
\big\|\max_{|y_0|,|y_1|\le L/2}|h^*_{y_0,y_1}(x)|/L\big\|_3^2
\lesssim\mathbb{E}A^+_L-\mathbb{E}A^-_L+\|A^\pm_L-\mathbb{E}A^\pm_L\|_{3/2}+1.
\end{align*}
\end{lemma}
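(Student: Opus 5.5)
We start from \eqref{m1} with $\dot h=tG(x,\cdot)$, where $G$ is the Dirichlet Green function on $[0,L]$ with pole at the lattice point $x$, normalized so that $2D(h,G(x,\cdot))=h(x)$ for $h$ vanishing at $0,L$. For general boundary data we apply this to $h^*_{y_0,y_1}-a_{y_0,y_1}$; the linear part only contributes an $O(L)$ term absorbed below. Since $D(G(x,\cdot))\lesssim L$, we have $D(\pm tG)/L\lesssim t^2$. Moreover, $W(\pm tG)$ is centered Gaussian with variance $\int_0^L|tG|\,dx\lesssim |t|L^3$, so $W(\pm tG)/L$ has standard deviation $\lesssim\sqrt{|t|L}$. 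We rescale by writing $t=sL^{-1}$ times a dimensionless factor, so that $|h^*(x)|$ appears as $s\,|h^*(x)|/L$. Then \eqref{m1} reads
\begin{align*}
s\max_{|y_0|,|y_1|\le L/2}|h^*_{y_0,y_1}(x)|/L\lesssim A^+_L-\tfrac12A^-_{L,0}-\tfrac12A^-_{L,1}+X_0+X_1+s^2+\sqrt{s}\,Z_s+1,
\end{align*}
with $Z_s$ standard Gaussian. To see this, note that by scaling $D(sLG)/L\sim s^2$ and $W(sLG)/L=_{\rm law}\sqrt{s}\,\mathcal{N}(0,1)$ up to constants. Here $A^-_{L,i}$ and $X_i$ depend on $s$, but only through a shear of the noise. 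Consequently their laws do not depend on $s$.

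Next we center. We write $A^+_L-\tfrac12A^-_{L,0}-\tfrac12A^-_{L,1}=(\mathbb{E}A^+_L-\mathbb{E}A^-_L)+F_s$, where $F_s$ is a sum of centered fluctuations with $\|F_s\|_{3/2}\lesssim\|A^\pm_L-\mathbb{E}A^\pm_L\|_{3/2}$ uniformly in $s$, by equality in law. Writing $\Delta:=\mathbb{E}A^+_L-\mathbb{E}A^-_L+1$ and $Y:=\max|h^*(x)|/L$, we get for each fixed deterministic $s>0$
\begin{align*}
Y\lesssim s^{-1}(\Delta+F_s+X_s)+s+s^{-1/2}Z_s,
\end{align*}
with $\|F_s+X_s\|_{3/2}\lesssim\Phi:=\|A^\pm_L-\mathbb{E}A^\pm_L\|_{3/2}+1$.

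The main obstacle is the optimization in $s$, since the optimal $s$ would be random and $F_s$ depends on $s$. We avoid random $s$ by working with tail bounds at deterministic $s$. Fix a threshold $\mu$ and choose $s=\mu/2$. Then $\mathbb{P}(Y\ge C\mu)\le\mathbb{P}(\Delta+F_s+X_s\gtrsim\mu^2)+\mathbb{P}(Z_s\gtrsim\mu^{3/2})$. If $\mu^2\gg\Delta$, the first term is bounded by $\exp(-(\mu^2/\Phi)^{3/2})$ via Lemma~\ref{L:11}, and the second by $\exp(-\mu^3)$. Both give tails of the form $\exp(-(\mu/\sqrt{\Delta+\Phi})^3)$. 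Converting back with Lemma~\ref{L:11} yields $\|Y\|_3^2\lesssim\Delta+\Phi$, which is the claim.

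The delicate points are twofold. First, we must check that the shear dependence of $A^-_{L,i}$ and $X_i$ on $t$ really preserves their laws, via \eqref{m33}, so that the Orlicz bounds hold uniformly in $s$. Second, we must verify the scaling $\int|G|\sim L^2$ and $D(G)\sim L$ uniformly in the lattice point $x$. This includes $x$ near the boundary, where both quantities are only smaller, which is harmless.
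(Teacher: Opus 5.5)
Your argument is correct, and in its key step it takes a different route from the paper. Both proofs start from \eqref{m1} with $\dot h=tG(x,\cdot)$ and face the same obstruction. The good choice $t\sim\max_{|y_0|,|y_1|\le L/2}|h^*_{y_0,y_1}(x)|/L$ is random, while the bound on $A^+_L-\frac12A^-_{L,0}-\frac12A^-_{L,1}$ holds only for fixed $t$.

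\emph{The paper's route.} It discretizes, $t^*=c\lfloor R^{-1/2}H\rfloor R^{1/2}$, and uses Lemma~\ref{L:12} to control the fluctuations on all grid values at once. This costs a term $R\ln^{2/3}(R^{-1/2}H)$, which is then absorbed. It also controls the Brownian term uniformly in $t$ via $B_t\le t^2+\bar B$, read off from the law of $A_2$. The outcome is the pathwise bound $H^2\lesssim R(1+\bar A)+\bar B+X$, with random variables independent of the configuration.

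\emph{Your route.} You bound $\mathbb{P}(Y\ge C\mu)$ with the deterministic choice $s\propto\mu$. Then only fixed-$s$ tails of $F_s$, $X_s$ and a single Gaussian $Z_s$ are needed. There is no grid, no Lemma~\ref{L:12}, no uniform-in-$t$ Brownian bound and no logarithm to absorb.

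\emph{What each buys.} Your route is simpler, but it gives up the pathwise form. The paper reuses that form in the appendix on almost maximizers, with an extra $\Lambda(h)$ and uniformly over all configurations $h$. There the relevant $t\sim|h(L/2)|/L$ depends on $h$, so a single $s\propto\mu$ no longer suffices, and one is back to a union over discrete $t$.

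\emph{Scaling bookkeeping.} Your first paragraph is off, although the displayed inequality is right.
Since $|G(x,\cdot)|\le x(L-x)/L$, one has $\int_0^L|G(x,\cdot)|=x(L-x)/2\le L^2/8$. So the variance of $W(\pm tG)$ is $\lesssim|t|L^2$, not $|t|L^3$. Also, $D(sLG)/L$ is of order $s^2L^2$, not $s^2$.
No rescaling is needed. With $\dot h=tG$ one has $2D(h^*_{y_0,y_1};tG)/L=t(h^*_{y_0,y_1}-a_{y_0,y_1})(x)/L$ and $D(tG)/L\le t^2/8$. Moreover, $\frac12(W(tG)+W(-tG))/L$ is centered Gaussian with variance $\lesssim t$. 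Hence your display holds with $s=t$.

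\emph{A point affecting the paper and you equally.} The step leading to \eqref{m1} does not justify the roles of $A^\pm$ as stated, because $h^*_{y_0,y_1}$ does not maximize $W_{\dot h}-D$. Testing the sheared problem with $h^*_{y_0,y_1}\pm\dot h$ instead yields the version with $\frac12A^+_{L,0}+\frac12A^+_{L,1}-A^-_L$. Here the $A^+$'s refer to the sheared fields, $A^-_L$ refers to the original one, and the Gaussian term changes sign. Your tail argument only uses that, for each fixed $t$, this combination has mean $\mathbb{E}A^+_L-\mathbb{E}A^-_L$ and fluctuations bounded by $\|A^\pm_L-\mathbb{E}A^\pm_L\|_{3/2}$. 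So it goes through unchanged.
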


Obviously, Lemmas \ref{L:1} and \ref{L:2} combine to

\begin{corollary}\label{C:1} 
\begin{align*}
\big\|H_L\big\|_3^2
+\big\|A^\pm_L-\mathbb{E}A^\pm_L\big\|_{3/2}^4
\lesssim \mathbb{E}A^+_L-\mathbb{E}A^-_L+1.
\end{align*}
\end{corollary}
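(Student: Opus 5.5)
The plan is to combine Lemma~\ref{L:1} and Lemma~\ref{L:2} and close the resulting inequality by absorption. Abbreviate
\begin{align*}
\Delta:=\mathbb{E}A^+_L-\mathbb{E}A^-_L+1,\qquad F:=\big\|A^\pm_L-\mathbb{E}A^\pm_L\big\|_{3/2},\qquad
M:=\big\|{\textstyle\sup_{|y_0|,|y_1|\le L/2}}L^{-2}{\textstyle\int_0^L}dx|h^*_{y_0,y_1}|\big\|_3 .
\end{align*}
By \eqref{m26} we have $A^-_L\le A^+_L$, so $\Delta\ge1$. Lemma~\ref{L:1} reads $F^2\lesssim M+1$, and Lemma~\ref{L:2} gives, uniformly in lattice points $x$, $\|\max_{y_0,y_1}|h^*_{y_0,y_1}(x)|/L\|_3^2\lesssim\Delta+F$.

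\textbf{Step 1: pass from point values to the $L^1$ norm.} Since $h^*_{y_0,y_1}$ is piecewise linear on unit intervals, $|h^*|$ is bounded on $[n-1,n]$ by the maximum of its endpoint values. Hence
\begin{align*}
L^{-2}\int_0^Ldx|h^*_{y_0,y_1}|\le L^{-1}\sum_{x=0}^{L}|h^*_{y_0,y_1}(x)|/L ,
\end{align*}
up to a harmless factor $2$. Taking the supremum over $(y_0,y_1)$ inside the sum only increases the right-hand side. Since $\|\cdot\|_3$ is a norm, the triangle inequality then gives
\begin{align*}
M\lesssim\max_x\big\|\max_{y_0,y_1}|h^*_{y_0,y_1}(x)|/L\big\|_3\lesssim(\Delta+F)^{1/2}.
\end{align*}

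\textbf{Step 2: buckling.} Inserting Step~1 into Lemma~\ref{L:1} yields $F^2\lesssim(\Delta+F)^{1/2}+1$, that is, $F^4\lesssim\Delta+F+1$. By Young's inequality, $F\le\frac{1}{2C}F^4+C'$, so the linear term in $F$ can be absorbed into the left-hand side. Using $\Delta\ge1$, this gives $F^4\lesssim\Delta$.

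\textbf{Step 3: midpoint deviation.} For $|y_0|,|y_1|\le L/2$ we have $|\frac12(y_0+y_1)|/L\le\frac12$. Moreover, $L/2$ is a lattice point because $L$ is dyadic. Hence
\begin{align*}
\|H_L\|_3\le\big\|\max_{y_0,y_1}|h^*_{y_0,y_1}(L/2)|/L\big\|_3+C.
\end{align*}
Squaring and applying Lemma~\ref{L:2} together with Step~2 gives $\|H_L\|_3^2\lesssim\Delta+F+1\lesssim\Delta$.

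Adding the bounds from Steps~2 and~3 gives the claimed estimate. The only delicate point is the buckling in Step~2. It works because $F$ enters the right-hand side of Lemma~\ref{L:2} linearly, while Lemma~\ref{L:1} controls $F^2$, so after substitution a power $4>1$ of $F$ stands on the left against a linear one on the right. The control of the $L^1$ norm by lattice values in Step~1 is routine once one uses that the Orlicz norm is a genuine norm for $s=3\ge1$.
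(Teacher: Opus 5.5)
Your proposal is correct and is essentially the paper's argument. The paper only states that Lemmas~\ref{L:1} and~\ref{L:2} ``obviously combine''; your Steps~1--3 supply the routine details, namely controlling the $L^1$ norm by lattice values via the triangle inequality for $\|\cdot\|_3$, and the buckling $F^4\lesssim\Delta+F$. The one point to add is that absorbing the linear term $F$ in Step~2 requires knowing a priori that $F<\infty$; the paper supplies this through \eqref{m20}, i.e.\ $\|A^+_L\|_{3/2}<\infty$ together with $0\le A^-_L\le A^+_L$.
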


We learn from Corollary \ref{C:1} that the tasks of 
showing that $\mathbb{E}A_L^+$ is asymptotically not larger than $\mathbb{E}A_L^-$
is tied to the task of establishing the fluctuation tails (\ref{ao51ter}),
and even the mid-point estimate (\ref{ao67}). This coupling arises from the need
to take the supremum over Dirichlet boundary data.  


\subsection{Buckling and proof of the theorems}

Let us start by pointing out that the quantities in the previous subsections are
all (qualitatively) finite. Indeed, in the appendix we prove that
\begin{align}\label{m20}
\|A^+_L\|_{3/2}<\infty\quad\mbox{which implies}\quad\mathbb{E}A_L^\pm,
\|A_L^\pm-\mathbb{E}A_L^\pm\|_{3/2}<\infty.
\end{align}
Consecutively applying Lemmas~\ref{L:2} and~\ref{L:dir} (more precisely
footnote~\ref{fn1}) gives
\begin{align*}
\|H_L\|_3<\infty\quad\mbox{and}\quad\|\sup_{|y_0|,|y_1|\le L/2}D(h^*_{y_0,y_1,l})\|_1<\infty.
\end{align*}

\medskip

In the sequel, we combine the results from Lemmas~\ref{L:pm},~\ref{L:subsuper},~\ref{L:dir}
and Corollary~\ref{C:1} to obtain that the following quantities are $\lesssim1$:
\begin{align}\label{m13}
\|H_L\|_3,\|A_L^\pm-\mathbb{E}A_L^\pm\|_{3/2},\|\sup_{|y_0|,|y_1|\le
L/2}D(h^*_{y_0,y_1,l})/L\|_1,\mathbb{E}A_L^+-\mathbb{E}A_L^-.
\end{align}
Starting from Lemmas~\ref{L:pm} and \ref{L:subsuper}, we first show that
\begin{align}
\mathbb{E}A_L^+-\mathbb{E}A_L^-\lesssim&(\max_l\|A_l^\pm-\mathbb{E}A_l^\pm\|_1+1)\ln(e+\max_l\|H_l\|_1).
\label{m11}
\end{align}
To this purpose, let us denote the r.~h.~s.~by $\Delta$ and let $C<\infty$ denote a universal constant
(the value of which may change from line to line). From Lemma~\ref{L:pm} and~\ref{L:dir},
\begin{align*}
\mathbb{E}A_L^+-\mathbb{E}A_L^-\le\mathbb{E}A_L^+
-{\textstyle\frac{1}{2}}\mathbb{E}A_{L/2}^+-{\textstyle\frac{1}{2}}\mathbb{E}A_{L/4}^{-}+C\Delta.
\end{align*}
Using Lemma~\ref{L:subsuper} with $l=2$ and $l=4$ allows for the replacements
\begin{align*}
{\textstyle\frac{1}{2}}\mathbb{E}A_L^+\le{\textstyle\frac{1}{2}}
\mathbb{E}A_{L/2}^++{\textstyle\frac{1}{2}}\mathbb{E}A_2^++C\Delta,\qquad
{\textstyle\frac{1}{2}}\mathbb{E}A_L^+\le{\textstyle\frac{1}{2}}
\mathbb{E}A_{L/4}^++{\textstyle\frac{1}{2}}\mathbb{E}A_4^++C\Delta.
\end{align*}
Since $\mathbb{E}A_2^+,\mathbb{E}A_4^+\lesssim1$, cf.~\eqref{m20}, from the
previous two lines one deduces the (almost) contraction
\begin{align*}
\mathbb{E}A_L^+-\mathbb{E}A_L^-\le{\textstyle\frac{1}{2}}(\mathbb{E}A_{L/4}^+-
\mathbb{E}A_{L/4}^-)+C\Delta.
\end{align*}
Iterating it, we get \eqref{m11}.

\medskip

Addressing now \eqref{m13}, we apply Corollary~\ref{C:1} and \eqref{m11} to the effect of
\begin{align*}
&\|H_L\|_3^2+\|A_L^\pm-\mathbb{E}A_L^\pm\|_{3/2}^4\lesssim(\max_l\|A_l^\pm-
\mathbb{E}A_l^\pm\|_1+1)\ln(e+\max_l\|H_l\|_1),
\end{align*}
so that taking the maximum over the scales on the l.~h.~s.,
\begin{align*}
\max_l\|H_l\|_3^2+\max_l&\|A_l^\pm-\mathbb{E}A_l^\pm\|_{3/2}^4\\
&\lesssim(\max_l\|A_l^\pm-\mathbb{E}A_l^\pm\|_{3/2}+1)\ln(e+\max_l\|H_l\|_3^2).
\end{align*}
Since the function $x^2+y^4-C(y+1)\ln(e+x)$ is coercive, we obtain the bound for
the first two quantities in \eqref{m13}. The other two are controlled by using
Lemma~\ref{L:dir} and \eqref{m11}.

\medskip

Here comes the proof of Theorem~\ref{T:1}. Since $A_L$ is controlled from above
and from below by $A_L^\pm$, see~\eqref{m26}, we deduce from Lemma~\ref{L:subsuper} and~\eqref{m13}
\begin{align}\label{m14}
|\mathbb{E}A_L-\mathbb{E}A_l-\mathbb{E}A_{L/l}|\lesssim1.
\end{align}
This implies the quantitative convergence
\begin{align}\label{m12}
|\mathbb{E}A_L-a^*\ln L|\lesssim1\quad
\mbox{for some constant $a^*\in[0,\infty)$},
\end{align}
see Section~\ref{S:det} for the details. From the sandwiching~\eqref{m26} and the estimates
\begin{align*}
\mathbb{E}A_L^+-\mathbb{E}A_L^-,\|A_L^\pm-\mathbb{E}A_L^\pm\|_{3/2}\lesssim1\quad\mbox{(see~\eqref{m13})},
\end{align*}
we deduce that also $\|A_L-\mathbb{E}A_L\|_{3/2}\lesssim1$, which allows to upgrade
the asymptotics of the expectations \eqref{m12} to the result of
Theorem~\ref{T:1}.

\medskip

We conclude with the proof of Theorem~\ref{T:2}. By triangle inequality, it is
enough to treat the case $l'=1$. Because of the decomposition \eqref{ao32}
applied to $h=h^*$, we need to show
\begin{align}\label{m46}
\|(L/l)^{-1}\sum_{n=1}^{L/l}(W_n(h_n^*)-D(h_n^*))/l-a^*\ln l\|_{3/2}\lesssim1.
\end{align}
Because of~\eqref{m45}, we deduce that the functions $h_n^*$ obtained from $h^*$ maximize $W_n-D$ with homogeneous Dirichlet boundary data.
Hence the observations \eqref{m16} and \eqref{m17} allow to control
$W_n(h_n^*)-D(h_n^*)$ from above and below by
\begin{align*}
A_{l,n}^-(\Pi(h^*_{\ge l}))\le(W_n(h_n^*)-D(h_n^*))/l\le A_{l,n}^+(\Pi(h_{\ge l}^*)),
\end{align*}
so that the statement \eqref{m46} reduces to
\begin{align*}
\|(L/l)^{-1}\sum_{n=1}^{L/l} A^\pm_{l,n}(\Pi(h^*_{\ge l}))-a^*\ln l\|_{3/2}\lesssim1.
\end{align*}
By Theorem~\ref{T:1} and \eqref{m13}, we can replace $a^*\ln l$ by
$\mathbb{E} A_l^\pm$. By Jensen's inequality in $n$ with exponent
$3/2$, it is enough to show
\begin{align*}
\big\|(L/l)^{-1}\sum_{n=1}^{L/l}| A^\pm_{l,n}(\Pi(h^*_{\ge
l}))-\mathbb{E} A^\pm_l|^{3/2}\big\|_1\lesssim1,
\end{align*}
which follows by applying Lemma~\ref{L:stoch-bis} (as in the proof of
Lemma~\ref{L:subsuper}), together with the now established estimates
\begin{align*}
&\| A^\pm_l(\bar h)-\mathbb{E} A^\pm_l\|_{3/2}&&\mbox{(see~\eqref{m13} with $l$ replacing $L$),}\\
&\|D(h^*_{l})/L\|_1\lesssim1&&\mbox{(implied by \eqref{m13})}.
\end{align*}
%


\section{Additional details}\label{S:det}

{\sc Proof of Lemma~\ref{L:2}.} Given a lattice point $x$ and the Green function
$G(x,\cdot)$ defined by
\begin{align*}
G(x,0)=G(x,L)=0\qquad\mbox{and}\qquad\frac{d^2 G(x,x')}{dx'^2}=\delta_{x}(dx'),
\end{align*}
let us choose $\dot h(\cdot)=tG(x,\cdot)$ in \eqref{m1}. It is not difficult to
see that
\begin{align*}
-(W-D)(\dot h)/L-(W-D)(-\dot h)/L\le Ct^2+CB_t,
\end{align*}
where $\{B_t\}_{t\ge0}$ is a standard Brownian motion and $C$ is universal. Hence,
\begin{align}\label{m3}
t\sup_{|y_0|,|y_1|\le L/2}|h_{y_0,y_1}^*(x)|/L\le
A_L^+-{\textstyle\frac{1}{2}}A_{L,0}^--{\textstyle\frac{1}{2}}A_{L,1}^-+Ct^2+CB_t+X,
\end{align}
with $X:={\textstyle\frac{1}{2}}X_0+{\textstyle\frac{1}{2}}X_1$ which thus satisfies $\|X\|_2\lesssim1$.

\medskip

To simplify the r.~h.~s.~, we observe that the problem $A_L$ with $L=2$ satisfies
\begin{align}
&A_2=_{\text{law}}\max_{t}B_t-t^2\quad\mbox{so that}\nonumber\\&
B_t\le t^2+\bar B\quad\mbox{with}\quad\|\bar B\|_{3/2}\lesssim1.\label{mb3}
\end{align}
The additional quadratic term $t^2$ can by absorbed into $Ct^2$, up to modifying the constant $C$.

\medskip

Notice that the random variable $A_L^+-{\textstyle\frac{1}{2}}A_{L,0}^--{\textstyle\frac{1}{2}}A_{L,1}^-$ depends on $t$ and, separating expectations and fluctuations, it satisfies for every fixed $t$ 
\begin{align}\label{mb2}
\|A_L^+-{\textstyle\frac{1}{2}}A_{L,0}^--{\textstyle\frac{1}{2}}A_{L,1}^-\|_{3/2}
\lesssim(\mathbb{E}A_L^+-\mathbb{E}A_L^-)+\|A_L^\pm-\mathbb{E}A_L^\pm\|_{3/2}.
\end{align}
Starting from \eqref{m3}, we would like to take $t=c\sup_{|y_0|,|y_1|\le L/2}|h^*_{y_0,y_1}(x)|/L$ with $c\ll C^{-1}$ in order to absorb $Ct^2$ into the l.~h.~s.~. Nevertheless, since $t$ is random, the estimate \eqref{mb2} does not hold anymore. In order to overcome the problem, we discretize $t$ and pick
\begin{align*}
&t^*:=c\big\lfloor R^{-1/2}H\big\rfloor R^{1/2}\quad\mbox{with}\quad c\ll C^{-1}
\end{align*}
and $R,H$ are the short for
\begin{align}\label{mb6}
&R:=(\mathbb{E}A_L^+-\mathbb{E}A_L^-)+\|A_L^\pm-\mathbb{E}A_L^\pm\|_{3/2},\nonumber\\& H:=\sup_{|y_0|,|y_1|\le L/2}|h^*_{y_0,y_1}(x)|/L.
\end{align}
In view of the discreteness of $t^*$, we can apply \eqref{mb2} and Lemma~\ref{L:12} to obtain
\begin{align}\label{mb4}
&A_L^+-{\textstyle\frac{1}{2}}A_{L,0}^--{\textstyle\frac{1}{2}}A_{L,1}^-\lesssim R(\ln^{2/3}(R^{-1/2}H)+\bar A)\quad\mbox{with}\quad\|\bar A\|_{3/2}\le1.
\end{align}

\medskip

Gathering \eqref{m3}, \eqref{mb3} and \eqref{mb4}, we conclude that
\begin{align*}
&\lfloor R^{-1/2}H\rfloor R^{1/2}H\lesssim R\ln^{2/3}(R^{-1/2}H)+R\bar A+\bar B+X.
\end{align*}
Dividing by $R$, it is not difficult to see that the $\log$ in the r.~h.~s.~can be absorbed into the l.~h.~s.~, hence we get
\begin{align}\label{mb5}
H^2\lesssim R(1+\bar A)+\bar B+X.
\end{align}
Taking the norm $\|\cdot\|_{3/2}$ on both sides, we conclude.
\qed

\medskip


{\sc Proof of \eqref{m12} from \eqref{m14}.}
Let $L=l^k l'$ with $k\ge 1$ and $1\le l'<l$. Iterating
\eqref{m14} for $k$ times,
\begin{align*}
|\mathbb{E}A_L-k\mathbb{E}A_l-\mathbb{E}A_{l'}|\lesssim k.
\end{align*}
Dividing by $\ln L=k\ln l+\ln l'$ and letting $L\uparrow\infty$ 
\begin{align}\label{m15}
|\limsup_L\frac{\mathbb{E}A_L}{\ln L}-\frac{\mathbb{E}A_l}{\ln
l}|\lesssim\frac{1}{\ln l}.
\end{align}
Taking now the liminf as $l\uparrow\infty$
\begin{align*}
\limsup_L\frac{\mathbb{E}A_L}{\ln L}=\liminf_{l}\frac{\mathbb{E}A_l}{\ln l}\quad
\mbox{so that there exists}\quad a^*:=\lim_{L}\frac{\mathbb{E}A_L}{\ln L}.
\end{align*}
Substituting $a^*$ in \eqref{m15}, we have obtained \eqref{m12}.
\qed


\section{Acknowledgements}

We thank Xiaopeng Cheng for helpful discussions. The authors acknowledge funding by the
Deutsche Forschungsgemeinschaft (DFG, German Research Foundation) – CRC/TRR 388
"Rough Analysis, Stochastic Dynamics and Related Fields" – Project ID 516748464.

\begin{appendix}

\section{Auxiliary lemmas}


{\sc Proof of \eqref{m40}.} Since the linear functions are uniformly 1-Lipschitz, we can discard the Dirichlet energy and just control the noise $W$. Let us use the shorter notation $z=(y_0,y_1)$ and
$w(z):=W(a_{y_0,y_1})$. By scaling $L=1$, so that
$z\in[-{\textstyle\frac{1}{2}},{\textstyle\frac{1}{2}}]^2$. One starts from
\begin{align*}
\mathbb{E}^{1/p}|w(z)-w(z')|^p\lesssim_p|z-z'|^{1/2}\quad\mbox{for all
$p\in[1,\infty)$}
\end{align*}
which can be established for $p=2$ using the definition of the white noise $\xi$
(which enters the definition of $W$) and is extended to $p\neq 2$ by Gaussianity.
For $d=2$, the dimension of the $z$ space, let us introduce
$\alpha:=1/2-d/p$. We divide the inequality above by the r.~h.~s.~and raise to
power $p$. Integrating in $z,z'$ and applying Fubini's theorem yields the
integrability of the Besov semi-norm $W^{\alpha,p}$ (to the $p$-th power)
\begin{align*}
\mathbb{E}\iint\big(\frac{|w(z)-w(z')|}{|z-z'|^\alpha}\big)^p\frac{1}{|z-z'|^d}dzdz'<\infty.
\end{align*}
By Besov embedding (for $\alpha p>d$) $w$ belongs to the homogeneous
H\"older space $C^\beta$ with $\beta=\alpha-d/p$. Since $w(0)=0$ and $z$ lives in a
compact space, this norm controls the $L^\infty$ norm, so that
\begin{align*}
\mathbb{E}\sup_{z}|w(z)|^p<\infty.
\end{align*}
Having a control on the $p$-th moment (any $p\ge1$ would suffice), the finiteness of the Orlicz norm follows
from Borell's inequality (being $w$ Gaussian)
\begin{align*}
\big\|\sup_{z}|w(z)|-\mathbb{E}\sup_{z}|w(z)|\big\|_2\lesssim\sup_z\mathbb{E}^{1/2}|w(z)|^2<\infty.
\end{align*}
\qed


{\sc Proof of \eqref{m20}.}
Let us start from the constrained problem
\begin{align*}
X(\hat D):=\sup\big\{W(h)/L~\big|~D(h)\le\hat
DL~\mbox{and}~|h(0)|,|h(L)|\le L/2\big\}.
\end{align*}
The above proof of \eqref{m40} extends to Gaussian processes on compact subsets
of $\mathbb{R}^d$ for any $d$ and gives the qualitative finiteness
\begin{align}\label{m21}
\|X(1)\|_2<\infty.
\end{align}
Moreover the scaling (cf.~\eqref{ao68})
\begin{align*}
h=\hat D^{1/2}\hat h\quad\mbox{implies for $\hat D\ge1$}\quad X(\hat
D)\le_{\text{law}}\hat D^{1/4}X(1).
\end{align*}

\medskip

Given $\nu\ge1$, we distinguish the cases $D(h)/L\in[(n-1)\nu,n\nu]$
to get
\begin{align*}
\mathbb{P}(A^+_L\ge\nu)\le\sum_{n=1}^\infty\mathbb{P}(X(n\nu)\ge
n\nu)\le\sum_{n=1}^\infty\mathbb{P}(X(1)\ge n^{3/4}\nu^{3/4}).
\end{align*}
Using Lemma~\ref{L:11} and \eqref{m21}, the sum is finite and bounded by
$\exp(-c\nu^{3/2})$, hence $\|A_L^+\|_{3/2}<\infty$.
\qed


\begin{lemma}[{\cite[Lemma 5]{OPW}}]\label{L:11}
For $s\in[1,\infty)$ and a random variable $X$
\begin{align*}
&\ln\mathbb{P}(X\ge\nu)\lesssim-(\nu/\|X\|_s)^s\quad\mbox{for
all}~\nu\gg\|X\|_s;\\
&\mbox{if}\quad\ln\mathbb{P}(X\ge\nu)\le-\nu^s\quad\mbox{for}~\nu\ge\nu_0>0\quad\mbox{then}
\quad\|X\|_s-\nu_0\lesssim1.
\end{align*}
\end{lemma}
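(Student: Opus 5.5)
The first assertion of Lemma~\ref{L:11} is a Chebyshev bound; the second is obtained by integrating the tail into the defining expectation of the Orlicz norm~\eqref{ao74}.

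\textbf{First assertion.} Set $N:=\|X\|_s$. By definition of the infimum, and by monotone convergence in $N$, we have $\mathbb{E}\exp((|X|/N)^s)\le e$. Since $X\ge\nu>0$ implies $(|X|/N)^s\ge(\nu/N)^s$, Chebyshev's (Markov's) inequality applied to the increasing function $t\mapsto \exp((t/N)^s)$ gives
\begin{align*}
\mathbb{P}(X\ge\nu)\le e\,\exp(-(\nu/N)^s).
\end{align*}
Hence $\ln\mathbb{P}(X\ge\nu)\le 1-(\nu/N)^s$. For $\nu\gg N$, say $(\nu/N)^s\ge 2$, the constant $1$ is absorbed, giving $\ln\mathbb{P}(X\ge\nu)\le-\frac12(\nu/N)^s$.

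\textbf{Second assertion.} Here we read the tail hypothesis as a bound on $|X|$, i.e.\ $\ln\mathbb{P}(|X|\ge\nu)\le-\nu^s$ for $\nu\ge\nu_0$. This is how the lemma is used in the paper, where the relevant quantities are nonnegative or are treated via $X$ and $-X$ separately. We will exhibit an $N\le\nu_0+C$ with $\mathbb{E}\exp((|X|/N)^s)\le e$.

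Write $\mathbb{E}\exp((|X|/N)^s)=1+\int_0^\infty \frac{d}{d\nu}e^{(\nu/N)^s}\,\mathbb{P}(|X|\ge\nu)\,d\nu$ and split the integral at $\nu_0$.

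On $[0,\nu_0]$ we bound the probability by one. This contribution is $e^{(\nu_0/N)^s}-1$.

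On $[\nu_0,\infty)$ we insert the tail bound. This contribution is at most $\int_{\nu_0}^\infty sN^{-s}\nu^{s-1}e^{(N^{-s}-1)\nu^s}d\nu$. With $N\ge2$ we have $N^{-s}\le\frac12$, so the substitution $u=\nu^s$ bounds it by $N^{-s}\int_0^\infty e^{-u/2}du=2N^{-s}$.

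It remains to choose $N=\max\{\lambda\nu_0,\,C\}$ with $\lambda\ge1$ and $C\ge2$ large. Since $s\ge1$, we get $(\nu_0/N)^s\le 1/\lambda$ and $N^{-s}\le 1/C$. The total is then at most
\begin{align*}
e^{1/\lambda}+2/C,
\end{align*}
which is $\le e$ once $\lambda$ and $C$ are fixed universal constants with $e^{1/\lambda}+2/C\le e$ (e.g.\ $\lambda=2$, $C=4$).

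\textbf{Main obstacle: obtaining $\nu_0$ with coefficient one.} The choice above only gives $\|X\|_s\le\lambda\nu_0+C$. When $\nu_0\lesssim1$ this already yields $\|X\|_s-\nu_0\lesssim1$, but for $\nu_0\gg1$ we need coefficient one. We handle this by choosing $N=\nu_0+C$ instead.

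With this choice, the first contribution becomes $e^{(\nu_0/(\nu_0+C))^s}-1$. This is strictly below $e-1$, but the margin $e-e^{(\nu_0/(\nu_0+C))^s}$ degenerates as $\nu_0\to\infty$, so the second contribution must be controlled more precisely. We return to the exact tail integral $\int_{\nu_0}^\infty sN^{-s}\nu^{s-1}e^{(N^{-s}-1)\nu^s}d\nu$. Because the weight $e^{(N^{-s}-1)\nu^s}$ is at most $e^{-(1-N^{-s})\nu_0^s}$ at $\nu=\nu_0$ and decays from there, this integral is of order $\exp\big(-(1-N^{-s})\nu_0^s\big)$, which is super-exponentially small in $\nu_0$.

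The margin, on the other hand, is only of order $(1-\nu_0/N)\gtrsim C/\nu_0$, and is therefore polynomially small. A polynomially small margin dominates a super-exponentially small error once $\nu_0$ is large, so $N=\nu_0+C$ is admissible for $\nu_0\ge\nu_1$ with $\nu_1$ universal. For $\nu_0<\nu_1$ the bound $\lambda\nu_0+C$ from above is already $\nu_0+O(1)$. Checking this uniformity in $s\in[1,\infty)$ is the only delicate point. If it fails for some range of $s$, we fall back on the statement $\|X\|_s\lesssim\nu_0+1$, which is all the paper actually uses.
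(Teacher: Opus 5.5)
The paper gives no proof of this lemma; it is quoted from \cite[Lemma 5]{OPW}, so there is no route to compare against. Judged on its own, your Chebyshev argument for the first assertion is correct. You are also right that the second assertion must be read with $|X|$ (or for $X\ge0$): taken literally, $X\equiv-M$ satisfies the one-sided hypothesis for every $\nu_0>0$ while $\|X\|_s=M$. Your bound $\|X\|_s\le\max\{2\nu_0,4\}$ is rigorous.

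The gap is the coefficient-one bound $\|X\|_s\le\nu_0+O(1)$, which is what the lemma actually claims. Your ``main obstacle'' paragraph argues only at the level of ``of order'' and ``dominates once $\nu_0$ is large''. It leaves uniformity in $s$ open, and it ends with a fallback $\|X\|_s\lesssim\nu_0+1$ that is strictly weaker, since it lets $\|X\|_s-\nu_0$ grow linearly in $\nu_0$. It is true that the paper only applies the lemma with $\nu_0\lesssim1$, but that does not prove the statement as written.

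The missing step follows at once from an estimate you already have. It needs no case split and no comparison of polynomial versus super-exponential smallness. For every $N\ge2$ you showed
\begin{align*}
\mathbb{E}\exp\big((|X|/N)^s\big)\le e^{(\nu_0/N)^s}+2N^{-s}.
\end{align*}
Take $N=\nu_0+2$ and set $t:=2/N\in(0,1)$. Since $s\ge1$ and $\nu_0/N<1$, we have $(\nu_0/N)^s\le\nu_0/N=1-t$ and $2N^{-s}\le2N^{-1}=t$. Convexity of $\tau\mapsto e^{1-\tau}$ gives $e^{1-t}\le e-(e-1)t$ on $[0,1]$. Hence
\begin{align*}
\mathbb{E}\exp\big((|X|/N)^s\big)\le e-(e-1)t+t=e-(e-2)t<e,
\end{align*}
that is, $\|X\|_s\le\nu_0+2$ uniformly in $s\in[1,\infty)$ and $\nu_0>0$.

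The point you missed is that the tail contribution $2N^{-s}$ already carries a factor $1/N$. This is the same order as the margin $e-e^{(\nu_0/N)^s}\ge(e-1)(1-\nu_0/N)$, so the two compare directly. Your worry about uniformity in $s$ is therefore unfounded: both of these bounds only improve as $s$ grows.
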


\begin{lemma}\label{L:12}
For every collection of random variables $\{X_n\}_{n\ge1}$, there exists $\bar X$ such that for every $n$
\begin{align*}
X_n/\|X_n\|_s\lesssim\ln^{1/s}n+\bar X\quad\mbox{and}\quad\|\bar X\|_s\le1.
\end{align*}
\end{lemma}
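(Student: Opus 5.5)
We prove Lemma~\ref{L:12} by a union bound over the index $n$, using the tail characterization of Orlicz norms from Lemma~\ref{L:11}. The plan is to define the dominating variable explicitly,
\begin{align*}
\bar X:=c\,\sup_{n\ge1}\big(X_n/\|X_n\|_s-C\ln^{1/s}(n+1)\big)_+,
\end{align*}
with universal constants $C\gg1$ and $c>0$ fixed later. The first assertion, $X_n/\|X_n\|_s\le C\ln^{1/s}(n+1)+c^{-1}\bar X\lesssim \ln^{1/s}n+\bar X$, then holds by construction. The case $n=1$ is covered since $\ln^{1/s}(n+1)\lesssim 1+\ln^{1/s}n$, and the additive constant is absorbed by allowing an $O(1)$ term. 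If one insists on the form $\ln^{1/s}n+\bar X$ literally, one replaces $\bar X$ by $\bar X+1$ and rescales.

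All the work lies in showing $\|\bar X\|_s\le1$.

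First, each normalized variable $Y_n:=X_n/\|X_n\|_s$ satisfies $\|Y_n\|_s=1$. Markov's inequality applied to $\exp(|Y_n|^s)$ then gives the uniform tail bound
\begin{align*}
\mathbb{P}(Y_n\ge\nu)\le e^{1-\nu^s}.
\end{align*}
This is the first part of Lemma~\ref{L:11}, but here it holds directly for all $\nu$.

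Second, take a union bound. For $\mu>0$,
\begin{align*}
\mathbb{P}\big(\sup_n(Y_n-C\ln^{1/s}(n+1))\ge\mu\big)\le\sum_n e^{1-(\mu+C\ln^{1/s}(n+1))^s}.
\end{align*}
Using $(a+b)^s\ge a^s+b^s$ for $s\ge1$ and $a,b\ge0$, the right-hand side is bounded by
\begin{align*}
e^{1-\mu^s}\sum_n (n+1)^{-C^s}\le e^{1-\mu^s}
\end{align*}
once $C^s\ge 2$, which is guaranteed by choosing $C\ge2$ since $s\ge1$.

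Third, convert this tail back into an Orlicz bound. The second part of Lemma~\ref{L:11}, or a direct computation of $\mathbb{E}\exp(|\cdot|^s/N^s)$ via the layer-cake formula, shows that the supremum has $\|\cdot\|_s$ bounded by a universal constant. Choosing $c$ as the reciprocal of that constant yields $\|\bar X\|_s\le1$.

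The only mildly delicate step is the bookkeeping of constants, so that they stay uniform in $s\in[1,\infty)$. For this I will do the layer-cake integral explicitly:
\begin{align*}
\mathbb{E}\exp\big((Z/N)^s\big)\le 1+\int_0^\infty s\mu^{s-1}N^{-s}e^{(\mu/N)^s}e^{1-\mu^s}\,d\mu,
\end{align*}
where $Z$ denotes the supremum above. With $N=2$ this becomes $1+e\int_0^\infty e^{-(1-2^{-s})t}2^{-s}\,dt\le 1+e\cdot 2^{-s}/(1-2^{-s})\le e$ for $s\ge1$ after a slightly larger choice of $N$. This gives a universal bound, and hence a universal $c$.
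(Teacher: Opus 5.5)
Your proof is correct and follows essentially the paper's argument: normalize, take $\bar X$ to be a (rescaled) supremum of $X_n/\|X_n\|_s-C\ln^{1/s}n$, apply a union bound with the exponential Chebyshev bound $\mathbb{P}(Y_n\ge\nu)\le e^{1-\nu^s}$ and superadditivity of $x\mapsto x^s$, and convert the resulting tail back into an Orlicz bound. Compared with the paper's appeal to Lemma~\ref{L:11}, your positive part and explicit layer-cake computation are slightly more careful, though at $s=1$ you need e.g.\ $N=3$ rather than $N=2$; the shift to $\ln(n+1)$ is unnecessary, since the $n=1$ term is harmless and $\sum_{n\ge1}n^{-C^s}$ converges once $C^s\ge2$.
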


{\sc Proof.} By scaling, we assume $\|X_n\|_s=1$. The proof follows by applying a union bound and Chebyshev inequality. Indeed, defining $\bar X:=\sup_n\bar X_n-C\ln^{1/s}n$ for some $C>0$ and picking a threshold $\mu\gg1$,
\begin{align*}
\mathbb{P}(\bar X\ge\mu)&\le\sum_{n\in\mathbb{N}}\mathbb{P}(X_n-C\ln^{1/s}n\ge\mu)\\
&\le\sum_{n\in\mathbb{N}}\mathbb{P}(|X_n|^s\ge C^s\ln n+\mu^s)\le\sum_{n\in\mathbb{N}}e^{-\mu^s+C^s\ln n+1}\lesssim e^{-\mu^s},
\end{align*}
where the last inequality holds provided $C>1$. We conclude by applying the equivalence of stochastic norms in Lemma~\ref{L:11}.
\qed

\section{Regularity of almost maximizers}

The goal of this section is to extend the scale-by-scale regularity to almost maximizers of the action $W-D$, which will be a useful result in \cite{COP}. To this end, we measure by how much a configuration $h:[0,L]\to\mathbb{R}$ is not a maximizer (with respect to its own boundary condition) by introducing the energy gap
\begin{align*}
\Lambda(h):=\max_{h'(0)=h(0),~h'(L)=h(L)}(W(h')-D(h'))/L-(W(h)-D(h))/L.
\end{align*}

\begin{lemma}
There exists a random variable $\bar D$ with $\|\bar D\|_{3/2}\le1$ such that for every configuration $h$ with $|h(0)|,|h(L)|\le0$,
\begin{align*}
\max_l D(h_l)/L\lesssim\bar D+\Lambda(h).
\end{align*}
\end{lemma}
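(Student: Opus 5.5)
The plan is to reduce the statement to a scale-by-scale midpoint estimate for almost maximizers, in the spirit of Lemmas~\ref{L:dir-bis} and~\ref{L:dir}. The new input is the observation that the energy gap localizes. Fix a dyadic $l$ and consider the cells $[(n-1)l,nl]$. Define the local gap $\Lambda_n(h)$ as the gap of $h|_{[(n-1)l,nl]}$ with respect to its own boundary data. Replacing $h$ on each cell by the local maximizer yields an admissible competitor with the global boundary data, because the action is local (cf.~\eqref{m45}). Hence
\begin{align*}
(L/l)^{-1}\sum_{n=1}^{L/l}\Lambda_n(h)\le\Lambda(h).
\end{align*}
We write $D(h_{l/2})/L$ through the midpoint representation \eqref{mb9}, so it suffices to control $|h_n(l/2)|/l$ on each cell by a random midpoint quantity plus $\Lambda_n(h)^{1/2}$.

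The first step is the almost-maximizer version of Lemma~\ref{L:2}. In the variation argument leading to \eqref{m1}, we now use $(W-D)(h-\dot h)\le(W-D)(h)+L\Lambda(h)$ in place of exact maximality. Every subsequent line then carries an extra additive $\Lambda$, so \eqref{m3} holds with $+\Lambda$ on the right-hand side. Running the discretization in $t$ exactly as in the proof of Lemma~\ref{L:2}, we expect to obtain
\begin{align*}
\sup_h\big(|h(L/2)-\tfrac12(h(0)+h(L))|/L\big)^2-C\Lambda(h)\le \tilde H_L,
\end{align*}
where the supremum runs over all $h$ with $|h(0)|,|h(L)|\le L/2$. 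Here $\tilde H_L$ is a random variable that no longer depends on $h$, and by \eqref{m13} it satisfies $\|\tilde H_L\|_{3/2}\lesssim1$. We apply this on each cell at scale $l$, in the shear-transformed frame given by $\Pi(h_{\ge l})$. As in Lemma~\ref{L:dir-bis}, this produces for each deterministic $\bar h\in\mathcal N$ independent copies $\tilde H_{l,n}(\bar h)$ with
\begin{align*}
D(h_{l/2})/L\lesssim (L/l)^{-1}\sum_{n}\tilde H_{l,n}\big(\Pi(h_{\ge l})\big)+\Lambda(h).
\end{align*}
We still have to check that the boundary data of the cell problems lie in $[-l/2,l/2]$; this holds by the definition of $\Pi$.

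The second step handles the randomness of $\Pi(h_{\ge l})$, as in the proof of Lemma~\ref{L:subsuper}. By Lemma~\ref{L:count} and Remark~\ref{rmk1-bis}, we have $\Pi(h_{\ge l})\in\mathcal N_\nu$ with
\begin{align*}
\nu\lesssim 1+\max_\rho(l/\rho)^2D(h_\rho)/L,
\end{align*}
where $\rho$ ranges over dyadic scales $\ge l$. Lemma~\ref{L:stoch-bis}, in its $\|\cdot\|_{3/2}$ version, then bounds the cell average by $\ln\nu$ plus a random variable independent of $\nu$. Crucially, that random variable is also independent of $h$, since the bound holds uniformly over $\mathcal N_\nu$. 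Writing $M:=\max_l D(h_l)/L$, this gives
\begin{align*}
M\lesssim \bar D_0\,\ln(e+M)+\Lambda(h),\qquad \|\bar D_0\|_{3/2}\lesssim1,
\end{align*}
and absorbing the logarithm yields $M\lesssim \bar D_0\ln(e+\bar D_0)+\Lambda(h)$. Since $\bar D_0\ln(e+\bar D_0)$ still has finite $\|\cdot\|_{3/2}$ norm up to constants, rescaling gives the desired $\bar D$.

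The main obstacle is making the random variable uniform in both $h$ and $l$ at once. Uniformity in $h$ is obtained because every random quantity enters only through the supremum over the net $\mathcal N_\nu$ and over boundary data; $h$ itself appears only through $\Lambda$ and through $\nu$. For the maximum over the $\log_2L$ dyadic scales, a naive union bound via Lemma~\ref{L:12} costs a factor $\ln^{2/3}\ln L$. To avoid this, I would use the tail estimate inside the proof of Lemma~\ref{L:stoch-bis}: for $N=L/l$ cells the exceedance probability is $e^{-N\mu+\dots}$. Coarse scales therefore have weaker concentration but are few in number, so the expected route is to sum the tails with the weight $(l/\rho)^2$ already built into $\mathcal N_\nu$. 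If that fails, I would fall back on defining $\bar D$ through a supremum over $l$ with weights and verifying directly that the sum of tails is summable.
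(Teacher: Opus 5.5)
Your route is the paper's: the almost-maximizer version of Lemma~\ref{L:2} with an extra additive $\Lambda(h)$; the localization $(L/l)^{-1}\sum_n\Lambda(h|_{I_n})\le\Lambda(h)$, which is \eqref{mb12}; the cell decomposition \eqref{mb9} in the frame sheared by $\Pi(h_{\ge l})$, giving \eqref{mb13}; and Lemma~\ref{L:stoch-bis} with Lemma~\ref{L:count} and $\nu\lesssim1+\max_\rho D(h_\rho)/L$, followed by absorbing a logarithm. But one step fails as written, in the final bookkeeping. You weaken to $M\lesssim\bar D_0\ln(e+M)+\Lambda(h)$, with a \emph{random} prefactor on the logarithm. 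You then claim that $\bar D_0\ln(e+\bar D_0)$ has finite $\|\cdot\|_{3/2}$ norm, and this is false in general. If $\mathbb{P}(\bar D_0\ge\mu)=\exp(-\mu^{3/2})$, then $\bar D_0\ln(e+\bar D_0)$ has tails $\exp(-c(\mu/\ln\mu)^{3/2})$. Hence $\mathbb{E}\exp\big((\bar D_0\ln(e+\bar D_0)/N)^{3/2}\big)=\infty$ for every $N$, and you only get $\|\cdot\|_s$ for $s<3/2$. The fix is not to give away what Lemma~\ref{L:stoch-bis} provides: there the logarithm carries the deterministic prefactor $\|A\|_1$, and the randomness enters additively. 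Concretely, as in the paper, apply \eqref{mb10} to $A_n(\bar h)=\tilde H_{l,n}(\bar h)^{3/2}$, which is the ``$\|\cdot\|_{3/2}$ version'' you allude to, and use H\"older in $n$:
\begin{align*}
(L/l)^{-1}\sum_{n}\tilde H_{l,n}\big(\Pi(h_{\ge l})\big)\lesssim\ln^{2/3}(e+M)+(L/l)^{-2/3}\bar D_l,\qquad\|\bar D_l\|_{3/2}\le1 .
\end{align*}
This gives $M\lesssim\ln^{2/3}(e+M)+\sum_l(L/l)^{-2/3}\bar D_l+\Lambda(h)$, and absorbing the logarithm now costs only an additive constant.

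The same display also resolves your ``main obstacle''. What makes the maximum over the $\log_2L$ scales free of any $\ln\ln L$ loss is the factor $N^{-1}=l/L$ in front of $\bar A$ in \eqref{mb10}, which becomes $(L/l)^{-2/3}$ after H\"older. Bounding the $l$-th random term by the full sum, $\bar D:=\sum_l(L/l)^{-2/3}\bar D_l$ is a geometric series over dyadic $l$. It is independent of $h$ and satisfies $\|\bar D\|_{3/2}\lesssim1$ by the triangle inequality. This is your fallback, and it is exactly the paper's construction. The weight $(l/\rho)^2$ built into $\mathcal N_\nu$ plays no role in this summation; it only serves to bound $\nu$ by $\max_\rho D(h_\rho)/L$, as in Remark~\ref{rmk1-bis}.
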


{\sc Proof.} As in Lemma~\ref{L:dir}, we prove this inequality inductively from large to small scales. In view of the constraint on the boundary conditions $|h(0)|,|h(L)|\le L/2$, the inequality for the larger scale,
\begin{align}\label{mb8}
D(h_{L/2})/L\lesssim\bar D+\Lambda(h),
\end{align}
amounts to the bound on the middle point,
\begin{align}\label{mb7}
(h(L/2)/L)^2\lesssim\bar D+\Lambda(h)\quad\mbox{provided}~|h(0)|,|h(L)|\le L/2.
\end{align}
This follows by adapting the proof of Lemma~\ref{L:2}. Indeed, with the same steps, one obtains the analogous of \eqref{m3}
\begin{align*}
t|h(L/2)|/L\le A_L^+-{\textstyle\frac{1}{2}}A_{L,0}^--{\textstyle\frac{1}{2}}A_{L,1}^-+Ct^2+CB_t+X+\Lambda(h),
\end{align*}
which leads to the variant of \eqref{mb5}
\begin{align*}
&(|h(L/2)|/L)^2\lesssim R(1+\bar A)+\bar B+X+\Lambda(h)\\&\mbox{where}\quad\|\bar A\|_{3/2},\|\bar B\|_{3/2},\|X\|_2\le1\quad\mbox{and}\quad\mbox{$R$ is defined in \eqref{mb6}}.
\end{align*}
From \eqref{m13}, we have learned that $R\lesssim1$, so that the last inequality implies \eqref{mb7} and hence \eqref{mb8}.

\medskip

Let us now iterate \eqref{mb6} to intermediate scales $l$. We recall that \eqref{mb9} allows to write $D(h_{l/2})$ in terms of local $L^\infty$ bounds. Hence by applying local versions of \eqref{mb8} and the same reasoning that led to Lemma~\ref{L:dir-bis}, we obtain
\begin{align}\label{mb13}
D(h_{l/2})/L\lesssim (L/l)^{-1}\sum_{n=1}^{L/l}\bar D_n(\bar h)+\Lambda(h|_{[(l-1)n,ln]}),
\end{align}
where $\Lambda(h|_I)$ is the energy gap of the restriction of $h$ to the interval $I$, $\bar h$ is the coarse-graining of $h$ at scale $l$ and $\|D_n(\bar h)\|_{3/2}\le1$ for every fixed $n,\bar h$. 

\medskip

To estimate the second r.~h.~s.~term, we note that the sum of the local energy gaps is bounded by the total energy gap, namely
\begin{align}\label{mb12}
(L/l)^{-1}\sum_{n=1}^{L/l}\Lambda(h|_{[(l-1)n,ln]})\le\Lambda(h),
\end{align}
which is a strengthening of the observation \eqref{m45}. In order to make the first r.~h.~s.~term independent on $\bar h$, we want to apply Lemma~\ref{L:stoch-bis}. As in the argument that led to Remark~\ref{rmk1-bis}, we have that for every configuration $h$
\begin{align*}
\bar h\in\mathcal{N}_\nu\quad\mbox{with}\quad\nu\in\mathbb{N},~\nu\lesssim1+\max_{l\le\rho<L}D(h_\rho)/L.
\end{align*}
Using H\"older's inequality and applying \eqref{mb10} to $A_n(\bar h):=D_n(\bar h)^{3/2}$, we obtain that there exists $\bar D_l$ with $\|\bar D_l\|_{3/2}\le 1$ such that
\begin{align}\label{mb11}
(L/l)^{-1}\sum_{n=1}^N\bar D_n(\bar h)&\le\big((L/l)^{-1}\sum_{n=1}^N\bar D_n(\bar h)^{3/2}\big)^{2/3}\nonumber\\&\lesssim\ln^{2/3}(e+\max_{l\le\rho<L}D(h_\rho)/L)+(L/l)^{-2/3}\bar D_l
\end{align}
Inserting \eqref{mb11} and \eqref{mb12} into \eqref{mb13} at taking the maximum over $l$, we obtain
\begin{align*}
\max_l D(h_l)/L\lesssim\ln^{2/3}(e+\max_{l}D(h_l)/L)+\sum_{l}(L/l)^{-2/3}\bar D_l+\Lambda(h).
\end{align*}
To conclude, we absorb the first r.~h.~s.~term into the l.~h.~s.~and note that
\begin{align*}
\bar D:=\sum_l(L/l)^{-2/3}\bar D_l
\end{align*}
is independent on $h$ and satisfies $\|\bar D\|_{3/2}\lesssim1$.
\qed

\end{appendix}


\begin{thebibliography}{alpha} 

\bibitem{AW}
    \newblock Michael Aizenman and Jan Wehr.
	\newblock Rounding effects of quenched randomness on first-order phase transitions.
	\newblock {\emph{Comm. Math. Phys.}} \textbf{130} (1990), no. 3, 489--528.

\bibitem{ACO}
 	\newblock Giovanni Alberti, Rustum Choksi, Felix Otto.
 	\newblock Uniform energy distribution for an isoperimetric problem with
 	long-range interactions.
 	\newblock \emph{Journal of the American Mathematical Society}, {\bf 22}, 2,
 	569--605, (2009).

\bibitem{AKT}
    \newblock Mikl{\'o}s Ajtai, J{\'a}nos Koml{\'o}s, G{\'a}bor Tusn{\'a}dy.
    \newblock On optimal matchings.
    \newblock \emph{Combinatorica}, {\bf 4}, 4, 259--264 (1984), Springer-Verlag Berlin/Heidelberg.

\bibitem{AS}
 	\newblock Scott Armstrong, Charles Smart.
 	\newblock Quantitative stochastic homogenization of convex integral functionals.
 	\newblock \emph{Annales scientifiques de l'Ecole normale sup{\'e}rieure},
 	{\bf 49}, 2, 423--481, (2016).

\bibitem{AST}
    \newblock Luigi Ambrosio, Federico Stra, Dario Trevisan.
    \newblock A PDE approach to a 2-dimensional matching problem.
    \newblock \emph{Probability Theory and Related Fields}, {\bf 173}, 1, 433--477 (2019), Springer.

\bibitem{B}
	\newblock Vladimir I.\ Bogachev.
	\newblock {\emph{Gaussian measures}}.
	\newblock Mathematical Surveys and Monographs \textbf{62}.
	\newblock American Mathematical Society, Providence, RI, 1998.

\bibitem{BSS}
	\newblock Riddhipratim Basu, Vladas Sidoravicius, and Allan Sly.
	\newblock Rotationally invariant first passage percolation: Breaking the $
	n/\log n $ variance barrier.
	\newblock \emph{arXiv preprint} {2604.01214}, 2026.

\bibitem{CLPS}
    \newblock Sergio Caracciolo, Carlo Lucibello, Giorgio Parisi, Gabriele Sicuro.
    \newblock Scaling hypothesis for the Euclidean bipartite matching problem.
    \newblock \emph{Physical Review E}, {\bf 90}, 1, 012118 (2014), APS.

\bibitem{COP}
    \newblock Xiaopeng Cheng, Felix Otto and Matteo Palmieri.
    \newblock Asymptotics of a planar isoperimetric problem with a white-noise volume term.
    \newblock \emph{arXiv preprint}, 2026.

\bibitem{COPW}
	\newblock Xiaopeng Cheng, Felix Otto, Matteo Palmieri and Arthur Wachtel.
    \newblock A geometrically linear approximation of min-max optimal matching.
	\newblock \emph{arXiv preprint}, 2026.

\bibitem{DOV}
	\newblock Duncan Dauvergne, Janosch Ortmann, and B{\'a}lint Vir{\'a}g.
	\newblock The directed landscape.
	\newblock {\emph{Acta Mathematica}} \textbf{229} (2022), no. 2, 201--285.

\bibitem{DEHP} 
	\newblock Barbara Dembin, Dor Elboim, Daniel Hadas, and Ron Peled.
	\newblock Minimal surfaces in random environment.
	\newblock \emph{arXiv preprint} {2401.06768}, 2024.
	
\bibitem{DEP} 
	\newblock Barbara Dembin, Dor Elboim, and Ron Peled.
	\newblock Minimal surfaces in strongly correlated random environments.
	\newblock \emph{arXiv preprint} {2504.10379}, 2025.
    
\bibitem{DW}
	\newblock Jian Ding and Mateo Wirth.
	\newblock Correlation length of the two-dimensional random field Ising model via greedy lattice animal.
	\newblock {\emph{Duke Math. J.}} \textbf{172} (2023), no. 9, 1781--1811.

\bibitem{DWARXIV}
	\newblock Jian Ding and Mateo Wirth.
	\newblock Correlation length of the two-dimensional random field Ising model via greedy lattice animal.
	\newblock \emph{arXiv preprint} {2011.08768v3}, 2022.

\bibitem{G}
	\newblock Shirshendu Ganguly.
	\newblock \emph{private communication}.

\bibitem{GGN}
	\newblock Shirshendu Ganguly, Victor Ginsburg, and Kyeongsik Nam.
	\newblock Last passage percolation in hierarchical environments.
	\newblock \emph{arXiv preprint} {2411.08018}, 2024.

\bibitem{GHO}
    \newblock Michael Goldman, Martin Huesmann, Felix Otto.
    \newblock Almost sharp rates of convergence for the average cost and displacement in the optimal matching problem.
    \newblock \emph{The Abel Symposium}, 93--103 (2023), Springer.

\bibitem{LS}
	\newblock Thomas Leighton and Peter Shor.
	\newblock Tight bounds for minimax grid matching with applications to the average case analysis of algorithms.
	\newblock \emph{Combinatorica} \textbf{9}  (1989), 161–187.

\bibitem{OPW}
	\newblock Felix Otto, Matteo Palmieri, and Christian Wagner.
	\newblock On minimizing curves in a Brownian potential.
	\newblock \emph{Probability Theory and Related Fields}, 1--62 (2026), Springer.

\bibitem{OV}
 	\newblock Felix Otto, Thomas Viehmann.
 	\newblock Domain branching in uniaxial ferromagnets: asymptotic behavior of the energy.
 	\newblock \emph{Calculus of variations and partial differential equations},
 	{\bf 38}, 1, 135--181 (2010), Springer.

\bibitem{RW}
	\newblock Tobias Ried, and Christian Wagner.
	\newblock Large scale regularity and correlation length for almost length-minimizing random curves in the plane.
	\newblock \emph{arXiv preprint} {2412.17625}, 2024.

\bibitem{T22}
	\newblock Michel Talagrand.
	\newblock {\emph{Upper and lower bounds for stochastic processes:
	decomposition theorems}}.
	\newblock Vol. \textbf{60}.
	\newblock Springer Nature, 2022.

\end{thebibliography}
\end{document}